\theoremstyle{definition}
\newtheorem{theorem}{Theorem}[section]
\newtheorem{lemma}[theorem]{Lemma}
\newtheorem{corollary}[theorem]{Corollary}
\newtheorem{proposition}[theorem]{Proposition}
\newtheorem{definition}[theorem]{Definition}
\theoremstyle{remark}
\newtheorem{example}[theorem]{Example}
\newtheorem{remark}[theorem]{Remark}
\newcommand{\ZZ}{\mathbb{Z}}
\newcommand{\Fp}{{\mathbb{F}_p}}
\newcommand{\Fpbar}{{\overline{\mathbb{F}}_{p}}}
\newcommand{\gfb}{\mathcal{G}_\ell(\overline{\mathbb{F}}_p)}
\newcommand{\gf}{\mathcal{G}_\ell(\mathbb{F}_p)}
\NewDocumentCommand{\slp}{ O{\ell} O{p} }{\mathcal{S}_{#1}^{#2}}
\DeclarePairedDelimiter{\set}{\{}{\}}
\DeclarePairedDelimiter{\brac}{(}{)}
\begin{document}

\title[The Spine of a Supersingular  $\ell$-Isogeny graph]{The Spine of a Supersingular $\ell$-Isogeny graph}

\author{Taha Hedayat}
\address{Department of Mathematics and Statistics\\
	University of Calgary, 2500 University Drive NW\\
	Calgary, AB T2N 1N4, Canada}
\email{taha.hedayat@ucalgary.ca}

\author{Sarah Arpin}
\address{Virginia Tech Department of Mathematics\\ 225 Stanger Street \\Blacksburg, VA 24061, USA}
\email{sarpin@vt.edu}

\author{Renate Scheidler}
\address{Department of Mathematics and Statistics\\
	University of Calgary, 2500 University Drive NW\\
	Calgary, AB T2N 1N4, Canada}
\email{rscheidl@ucalgary.ca}

\begin{abstract}
   Supersingular elliptic curve $\ell$-isogeny graphs over finite fields offer a setting for a number of quantum-resistant cryptographic protocols. The security analysis of these schemes typically assumes that these graphs behave randomly. Motivated by this debatable assertion, we explore structural properties of these graphs. We detail the behavior, governed by congruence conditions on $p$, of the $\ell$-isogeny graph over $\Fp$ when passing to the spine, i.e.\ the subgraph induced by the $\Fp$-vertices in the full $\ell$-isogeny graph. We describe the diameter of the spine and offer numerical data on the number of vertices, over both $\Fp$ and $\Fpbar$, in the center of the $\ell$-isogeny graph. Our plots of these counts exhibit a wave-shaped pattern which supports the assertion that centers of supersingular $\ell$-isogeny graphs exhibit the same behavior as those of random $(\ell+1)$-regular graphs.
\end{abstract}

\keywords{Supersingular elliptic curve $\ell$-isogeny graph, spine, graph diameter, graph center}
\subjclass{14H52, 11G20, 11-04, 11-11, 05C40}

\maketitle

\begin{center}
\scalebox{.5}{
\begin{tikzpicture}

\draw[thick] (-2,0)--(0,2)--(2,0)--(0,-2)--(-2,0);
\draw[thick] (-2,0)--(1,1)--(0,-2)--(-1,1)--(2,0)--(-1,-1)--(0,2)--(1,-1)--(-2,0);
\draw[thick] (0,2)--(0,-2);
\draw[thick] (-2,0)--(2,0);
\draw[thick](-1,1)--(1,-1);
\draw[thick](1,1)--(-1,-1);

\draw[thick](-5,5)--(-5,-5);
\draw[thick](-5,5)--(-6,-5);
\draw[thick](-5,5)--(-7,-5);
\draw[thick](-5,5)--(-8,-5);
\draw[thick](-5,5)--(-9,-5);

\draw[thick](-6,5)--(-5,-5);
\draw[thick](-6,5)--(-6,-5);
\draw[thick](-6,5)--(-7,-5);
\draw[thick](-6,5)--(-8,-5);
\draw[thick](-6,5)--(-9,-5);

\draw[thick](-7,5)--(-5,-5);
\draw[thick](-7,5)--(-6,-5);
\draw[thick](-7,5)--(-7,-5);
\draw[thick](-7,5)--(-8,-5);
\draw[thick](-7,5)--(-9,-5);

\draw[thick](-8,5)--(-5,-5);
\draw[thick](-8,5)--(-6,-5);
\draw[thick](-8,5)--(-7,-5);
\draw[thick](-8,5)--(-8,-5);
\draw[thick](-8,5)--(-9,-5);

\draw[thick](-9,5)--(-5,-5);
\draw[thick](-9,5)--(-6,-5);
\draw[thick](-9,5)--(-7,-5);
\draw[thick](-9,5)--(-8,-5);
\draw[thick](-9,5)--(-9,-5);

\draw[thick](5,5)--(5,-5);
\draw[thick](5,5)--(6,-5);
\draw[thick](5,5)--(7,-5);
\draw[thick](5,5)--(8,-5);
\draw[thick](5,5)--(9,-5);

\draw[thick](6,5)--(5,-5);
\draw[thick](6,5)--(6,-5);
\draw[thick](6,5)--(7,-5);
\draw[thick](6,5)--(8,-5);
\draw[thick](6,5)--(9,-5);

\draw[thick](7,5)--(5,-5);
\draw[thick](7,5)--(6,-5);
\draw[thick](7,5)--(7,-5);
\draw[thick](7,5)--(8,-5);
\draw[thick](7,5)--(9,-5);

\draw[thick](8,5)--(5,-5);
\draw[thick](8,5)--(6,-5);
\draw[thick](8,5)--(7,-5);
\draw[thick](8,5)--(8,-5);
\draw[thick](8,5)--(9,-5);

\draw[thick](9,5)--(5,-5);
\draw[thick](9,5)--(6,-5);
\draw[thick](9,5)--(7,-5);
\draw[thick](9,5)--(8,-5);
\draw[thick](9,5)--(9,-5);

\draw[thick](0,8)--(-5,5);
\draw[thick](0,8)--(-6,5);
\draw[thick](0,8)--(-7,5);
\draw[thick](0,8)--(-8,5);
\draw[thick](0,8)--(-9,5);

\draw[thick](0,8)--(5,5);
\draw[thick](0,8)--(6,5);
\draw[thick](0,8)--(7,5);
\draw[thick](0,8)--(8,5);
\draw[thick](0,8)--(9,5);

\draw[thick](0,8)--(-2,0);
\draw[thick](0,8)--(-1,1);
\draw[thick](0,8)--(0,2);
\draw[thick](0,8)--(1,1);
\draw[thick](0,8)--(2,0);

\draw[thick](0,8)--(-1,-1);
\draw[thick](0,8)--(1,-1);

\draw[thick](0,-8)--(-1,1);
\draw[thick](0,-8)--(1,1);

\draw[thick](0,-8)--(-5,-5);
\draw[thick](0,-8)--(-6,-5);
\draw[thick](0,-8)--(-7,-5);
\draw[thick](0,-8)--(-8,-5);
\draw[thick](0,-8)--(-9,-5);

\draw[thick](0,-8)--(5,-5);
\draw[thick](0,-8)--(6,-5);
\draw[thick](0,-8)--(7,-5);
\draw[thick](0,-8)--(8,-5);
\draw[thick](0,-8)--(9,-5);

\draw[thick](0,-8)--(-2,0);
\draw[thick](0,-8)--(-1,-1);
\draw[thick](0,-8)--(0,-2);
\draw[thick](0,-8)--(1,-1);
\draw[thick](0,-8)--(2,0);

\draw[thick](-2,0)--(-5,5);
\draw[thick](-2,0)--(-6,5);
\draw[thick](-2,0)--(-7,5);
\draw[thick](-2,0)--(-8,5);
\draw[thick](-2,0)--(-9,5);

\draw[thick](2,0)--(5,5);
\draw[thick](2,0)--(6,5);
\draw[thick](2,0)--(7,5);
\draw[thick](2,0)--(8,5);
\draw[thick](2,0)--(9,5);

\draw[thick](-2,0)--(-5,-5);
\draw[thick](-2,0)--(-6,-5);
\draw[thick](-2,0)--(-7,-5);
\draw[thick](-2,0)--(-8,-5);
\draw[thick](-2,0)--(-9,-5);

\draw[thick](2,0)--(5,-5);
\draw[thick](2,0)--(6,-5);
\draw[thick](2,0)--(7,-5);
\draw[thick](2,0)--(8,-5);
\draw[thick](2,0)--(9,-5);

\end{tikzpicture}}
\end{center}

\section{Introduction} \label{sec:intro}

Supersingular elliptic curve isogeny graphs have undergone a surge of research activity in recent years, in part due to their suitability as a mathematical foundation for quantum-safe cryptographic applications. In particular, the path finding problem in these graphs seems to be intractable even on a quantum computer. For a prime $p$, the supersingular isogeny graph $\gfb$ has as its  vertex set the $\Fpbar$-isomorphism classes of supersinguar elliptic curves, labeled by their $j$-invariants in~$\mathbb{F}_{p^2}$. The directed edges of $\gfb$ are the $\ell$-isogenies between elliptic curves representing vertices, where $\ell$ is a (usually small) prime. Given two vertices, represented by two elliptic curves $E, E'$ over $\Fpbar$, the path finding problem in $\gfb$ asks to find a path from $E$ to $E'$ comprised of $\ell$-isogenies. The presumed intractability of this problem provides security for a number of cryptographic protocols, including \cite{CGL09,SqiSign,Scallop} and their variants. 

It is well-known that supersingular $\ell$-isogeny graphs are optimal expander graphs, and are in fact Ramanujan graphs when $p \equiv 1 \pmod{12}$. The security analysis of supersingular isogeny based cryptographic schemes typically assumes that $\gfb$ behaves like a ``random'' Ramanujan graph, a supposition that has since been called into question. For example, the $p$-power Frobenius acting on $\gfb$ pairs up paths with their  Frobenius conjugate paths. It also fixes vertices in $\Fp$ and those $\ell$-isogenies between them that are defined over $\Fp$. 
Moreover, path finding becomes substantially easier when the start and end vertices belong to $\Fp$ \cite{DelfGalbraith,Childs14}. Such special structural features exhibited by the subgraph of $\gfb$ induced by the $\Fp$-vertices, referred to as the \emph{spine} of $\gfb$, may make it possible to distinguish a supersingular elliptic curve $\ell$-isogeny graph from a random optimal expander or Ramanujan graph.

These questions prompted the authors of \cite{Arpin} to launch a thorough investigation into the spine $\slp$ of $\gfb$. To that end, they considered the supersingular isogeny $\Fp$-graph $\gf$, where the vertices are now $\Fp$-isomorphism classes of supersingular  elliptic curves and the edges are $\Fp$-rational $\ell$-isogenies. The structure of this graph is well understood and was described in detail in \cite{DelfGalbraith}. There is a natural two-step process of passing from $\gf$ to the spine $\slp \subset \gfb$: vertices in $\gf$ corresponding to twists of curves are identified in $\slp$ (with any $\Fp$-isogenies between them turning into loop edges), and edges arising from $\ell$-isogenies not defined over $\Fp$ are then added. In \cite{Arpin}, the possible behaviors exhibited by the connected components of $\gf$ under this process was analyzed in detail. Here, we expand on this exploration, refining the results of \cite{Arpin} and offering new findings. 

Our contribution herein is two-fold. In Section \ref{sec:ell2structurethms}, we describe all the ways, characterized by explicit congruence conditions on $p$, in which components of $\gf$ can behave when passing to $\slp$ for the arguably most interesting case of $\ell = 2$. In Section \ref{sec:ell3structurethms}, we do the same for $\ell = 3$ and give a road map of how to extend our approach to isogeny degrees $\ell \ge 5$.

Leveraging our spine structure results, we describe the diameter (the largest possible directed distance between any pair of vertices) in any component of $\slp$ in Section \ref{sec:exp}. This is followed by an extensive numerical investigation of the center of $\gfb$, i.e.\ the set of vertices for which the largest distance to any other vertex is minimal. Center vertices can be thought of as having increased connectivity to the rest of the graph compared to the vertices outside the center. The fact that Frobenius is a graph automorphism on $\gfb$ that fixes precisely the $\Fp$-vertices might suggest that spine vertices are more prominently represented in the center of $\gfb$ than vertices outside $\Fp$. Our numerical experiments for  $\ell = 2, 3$ and the first 2260 primes $p \ne 2, 3$ (i.e.\ $5 \leq p < 20000$) demonstrate that this is in fact not the case for this range of parameters, thereby providing evidence against this claim. Plots of $\Fp$-vertices counts belonging to the center of $\mathcal{G}_\ell(\Fpbar)$ exhibit a wave pattern, where wave peaks become higher and are spaced increasingly far apart for larger values of $p$. Similar wave shapes appear for $\ell = 3$, and this behavior is more pronounced when plotting the size of the entire center of $\mathcal{G}_2(\Fp)$ rather than just the number of its spine vertices. Although visually striking, this pattern is in fact evidence that the center size of $\gfb$ behaves like that of a random $(\ell +1)$-regular graph.

\subsection{Accompanying data and code}
The data used to create the figures in this work, as well as the SageMath \cite{sage} code used to generate that data, can be found at the GitHub repository  \cite{LUCANTGitHub}. Consult the \href{https://github.com/TahaHedayat/LUCANT-2025-Supersingular-Ell-Isogeny-Spine/blob/main/README.md}{README.md} file for a list of the included files and their functionalies. This code is modified from its original version, which was created by the first author in the summer of 2023. Data collected in \verb|.csv| files was generated using SageMath 10.4 \cite{sage} on a MacBook Pro, Apple M3, with 16 GB of memory, running macOS Sonoma 14.6.1. 

\subsection{Acknowledgments} The first and third author acknowledge the support of the \emph{Natural Sciences and Engineering Research Council of Canada} (NSERC). The second author is a faculty fellow of the Commonwealth Cyber Initiative and is supported by an AMS-Simons Travel Grant. We thank Jonathan Love, Jonathan Komada Eriksen, and Thomas Decru for their observations and comments in Section 6.

\section{Supersingular elliptic curve isogeny graphs} \label{sec:sig}

Let $p$ be a prime, $\Fp$ the finite field of $p$ elements and $\Fpbar$ a fixed algebraic closure of $\Fp$. We consider supersingular elliptic curves over $\Fpbar$ and recall that every $\Fpbar$-isomorphism class of such curves contains a representative that is defined over~$\mathbb{F}_{p^2}$. Isomorphism classes of supersingular elliptic curves are classified by their $j$-invariant, which is thus an element of $\mathbb{F}_{p^2}$. The $j$-invariants of curves with extra automorphisms may or may not be supersingular; specifically, $j = 1728$ is supersingular if and only if $p \equiv 3 \pmod{4}$, and $j = 0$ is supersingular if and only if $p \equiv 2 \pmod{3}$. 

While the endomorphism ring of a supersingular elliptic curve defined over $\Fpbar$ is isomorphic to a maximal order in a quaternion algebra, the ring of endomorphisms defined over $\Fp$ of any elliptic curve over $\Fp$ is isomorphic to an imaginary quadratic order. Specifically, when $p \equiv 1 \pmod{4}$, all supersingular elliptic curves defined over $\Fp$ have $\Fp$-endomorphism ring isomorphic to the maximal order $\ZZ[\sqrt{-p}]$ of discriminant $-4p$. When $p \equiv 3 \pmod{4}$, all such curves have $\Fp$-endomorphism ring isomorphic to either the maximal order $\ZZ[(1+\sqrt{-p})/2]$ of discriminant $-p$ or its index 2 suborder $\ZZ[\sqrt{-p}]$  of discriminant $-4p$. For any quadratic discriminant $\Delta$, denote by $h(\Delta)$ the class number (i.e.\ the size of the class group) of the quadratic order of discriminant $\Delta$.

\begin{proposition}[Class number, $p \equiv 3 \pmod{4}$] \textcolor{white}{.} \label{prop:classno} 
If $p \equiv 3 \pmod{4}$, then the quadratic order $\ZZ[(1+\sqrt{-p})/2]$ is maximal and has odd class number $h(-p)$, and its quadratic suborder $\ZZ[\sqrt{-p}]$ has index 2 and class number $h(-4p) = 3h(-p)$.
\end{proposition}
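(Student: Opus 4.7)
The plan is to establish each of the three assertions — maximality, oddness of $h(-p)$, and the index-$2$ statement with $h(-4p) = 3h(-p)$ — using the standard structure theory of quadratic orders together with genus theory.

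For maximality and the index, I would rely on the classical classification of rings of integers in imaginary quadratic fields. Since $p \equiv 3 \pmod 4$ gives $-p \equiv 1 \pmod 4$, the ring of integers of $K = \mathbb{Q}(\sqrt{-p})$ is $\mathcal{O}_K = \mathbb{Z}[(1+\sqrt{-p})/2]$, with field discriminant $-p$. Then, writing $\alpha = (1+\sqrt{-p})/2$ and noting $\sqrt{-p} = 2\alpha - 1$, the change-of-basis matrix from $\{1,\alpha\}$ to $\{1,\sqrt{-p}\}$ has determinant $2$, so $\mathbb{Z}[\sqrt{-p}]$ has index $2$ in $\mathcal{O}_K$; equivalently, its discriminant is $-4p = 2^2 \cdot (-p)$.

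For the oddness of $h(-p)$, I would invoke genus theory: the $2$-rank of the class group of an imaginary quadratic field of fundamental discriminant $d$ is one less than the number of prime discriminants dividing $d$. Since $-p$ has a single prime factor, this $2$-rank is zero and $h(-p)$ is odd.

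For the class number of the suborder, I would apply the standard conductor formula
\[
h(\mathcal{O}_f) \;=\; \frac{h(\mathcal{O}_K)\,f}{[\mathcal{O}_K^\times : \mathcal{O}_f^\times]} \prod_{q\mid f}\!\left(1 - \left(\tfrac{d_K}{q}\right)\tfrac{1}{q}\right),
\]
with $d_K = -p$ and $f = 2$. For $p > 3$ the unit index is $1$, so the whole question reduces to evaluating the Kronecker symbol $\left(\tfrac{-p}{2}\right)$. I expect this to be the main technical step: when $p \equiv 3 \pmod 8$, one has $-p \equiv 5 \pmod 8$, the symbol equals $-1$, and the bracketed factor becomes $2\cdot(1+\tfrac12)=3$, giving $h(-4p) = 3h(-p)$ as claimed. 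The companion case $p \equiv 7 \pmod 8$ (where the Kronecker symbol equals $+1$) and the small prime $p = 3$ (where $\mathcal{O}_K^\times$ is cyclic of order $6$, contributing a nontrivial unit index) should be checked separately to confirm or clarify the stated relation over the full range $p \equiv 3 \pmod 4$; resolving these edge cases is where I would expect the bulk of the subtlety to lie.
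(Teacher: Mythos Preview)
Your approach mirrors the paper's exactly: maximality via the fundamental discriminant $-p$, oddness via genus theory, and the class number ratio via the standard conductor formula (which is precisely what the paper's cited reference, Cox Cor.~7.28, provides). The paper simply cites the result rather than unpacking the Kronecker symbol computation as you do.

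Your caution about the case $p\equiv 7\pmod 8$ is not a gap in your argument but a genuine issue with the proposition as stated. For $p\equiv 7\pmod 8$ one has $-p\equiv 1\pmod 8$, so $\bigl(\tfrac{-p}{2}\bigr)=+1$ and the conductor formula yields $h(-4p)=h(-p)$, not $3h(-p)$; small examples such as $p=7$ (where $h(-7)=h(-28)=1$) and $p=23$ (where $h(-23)=h(-92)=3$) confirm this. The identity $h(-4p)=3h(-p)$ holds only for $p\equiv 3\pmod 8$ (and fails also at $p=3$ due to the extra units, as you note). The paper's later arguments use only the oddness of $h(-p)$, so this does not affect the rest of the paper, but you are right that the class number relation needs the stronger hypothesis $p\equiv 3\pmod 8$.
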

\begin{proof}
    Suppose $p \equiv 3 \pmod{4}$. Then $\ZZ[(1+\sqrt{-p})/2]$ has discriminant $-p$ which is fundamental, so this is the maximal order of $\mathbb{Q}(\sqrt{-p})$. The suborder $\ZZ[\sqrt{-p}]$ has discriminant $-4p$ and hence index 2 in the maximal order. 
       
    For any quadratic discriminant $\Delta < 0$, the 2-rank of the class group of $\mathbb{Q}(\sqrt{-p})$, i.e.\ its number of 2-Sylow factors, is one less than the number of prime factors of $\Delta$ by genus theory (see \cite[p.\ 170]{JW} for example). Hence $h(-p)$ is odd when $p \equiv 3 \pmod{4}$. The identity $h(-4p) = 3h(-p)$ follows from \cite[Cor.\ 7.28]{Cox}).  
\end{proof}

When $p \equiv 1 \pmod{4}$, the 2-rank of the class group of $\mathbb{Q}(\sqrt{-p})$ is 1, but this does not determine the 2-adic valuation of $h(-4p)$.

Now fix a prime $\ell$. We associate two directed graphs to the set of supersingular elliptic curves and their $\ell$-isogenies. For both graphs, the set of vertices does not depend on $\ell$, but the set of edge does. Two isogenies are said to be \emph{$\Fp$-equivalent} (resp., \emph{$\Fpbar$-equivalent}) if they are equal up to post-composition with an $\Fp$-automorphism (resp., an $\Fpbar$-autormorphism).

\begin{definition}[Supersingular elliptic curve isogeny graphs]
  
Let $p$ and $\ell$ be primes. Define the following two graphs:
\begin{itemize}
    \item The \emph{supersingular elliptic curve $\ell$ isogeny graph over $\Fp$}, denoted $\mathcal{G}_\ell(\Fp)$, is the graph whose vertices are $\Fp$-isomorphism classes of supersingular elliptic curves and whose edges are
    $\ell$-isogenies of these curves up to $\Fp$-equivalence. 
    \item The \emph{supersingular elliptic curve $\ell$-isogeny graph over $\Fpbar$}, denoted $\gfb$, is the graph whose vertices are $\Fpbar$-isomorphism classes of supersingular elliptic curves and whose edges are $\ell$-isogenies of these curves up to $\Fpbar$-equivalence. 
\end{itemize}
\end{definition}

By identifying $\ell$-isogenies with their duals, both graphs become undirected. We will work with the undirected variants when it is convenient and edge direction does not matter. 

It is well-known that $\gfb$ is an optimal expander graph, and is in fact an $(\ell+1)$-regular Ramanujan graph when $p \equiv 1 \pmod{12}$. We recall three key results governing the structure of~$\gfb$:

\begin{itemize}
    \item  For any $j$-invariant $j$, the neighbors of $j$ in $\gfb$ are precisely the roots of $\Phi_\ell(j,Y) \pmod{p}$. The multiplicity of an edge joining $j$ to $j'$ is the multiplicity of the root $j'$ of $\Phi_\ell(j,Y) \pmod{p}$. An explicit list of modular polynomials of can be found at \cite{drewmod}. 
    \item Two (not necessary distinct) $j$-invariants $j, j'$ are joined by a multi-edge in $\gfb$ if both are roots of the polynomial Res$_\ell(X) \pmod{p}$, where
    \begin{equation} \label{eq:Res}
        \mbox{Res}_\ell(X) = \mbox{Res} \left ( \Phi_\ell(X,Y), \frac{\partial}{\partial Y} \Phi_\ell(X,Y); \ Y \right ) ,
    \end{equation}
    the resultant of the the level $\ell$ modular polynomial $\Phi_\ell(X,Y)$ and its partial derivative with respect to $Y$ when both are considered as polynomials in $Y$ with coefficients in $\ZZ[X]$.
    \item A root of the Hilbert class polynomial $H_\Delta(X)$ for the imaginary quadratic order of discriminant $\Delta$ is a supersingular $j$-invariant if and only if $p$ is inert in $\mathbb{Q}(\sqrt{\Delta})$. 
\end{itemize}

The structure of $\mathcal{G}_\ell(\Fp)$ for $p \ge 5$ was first described in \cite{DelfGalbraith}.
Following volcano terminology, vertices in $\gf$ corresponding to elliptic curves with $\Fp$-endomorphism ring $\ZZ[(1+\sqrt{-p})/2]$ are said to lie on the \emph{surface}; those with $\Fp$-endomorphism rings $\ZZ[\sqrt{-p}]$ lie on the \emph{floor}. When $p \equiv 1 \pmod{4}$, all vertices are on the floor. We recall the main  structure theorem of \cite{DelfGalbraith} here. 

\begin{theorem}{\cite[Thm.\ 2.7]{DelfGalbraith}, Structure of $\gf$}\label{thm:DG}
    Let $p \ge 5$ be a prime. 
    \begin{enumerate}
        \item If $\ell = 2$ and $p\equiv 1\pmod{4}$, then $\mathcal{G}_2(\Fp)$ consists of $h(-4p)$ vertices joined in adjacent pairs.
      
        \begin{center}
        \begin{tikzpicture}
            \node[draw,fill,circle,scale=0.5] (a) at (0,0) {};
            \node[draw,fill,circle,scale=0.5] (b) at (1,0) {};
            \node[draw,fill,circle,scale=0.5] (c) at (3,0) {};
            \node[draw,fill,circle,scale=0.5] (d) at (4,0) {};
            \node[draw,fill,circle,scale=0.5] (e) at (6,0) {};
            \node[draw,fill,circle,scale=0.5] (f) at (7,0) {};
            \draw[-] (a) to (b);
            \draw[-] (c) to (d);
            \draw[-] (e) to (f);
        \end{tikzpicture}
        \end{center}
        \item If $\ell = 2$ and $p\equiv 3\pmod{8}$, then $\mathcal{G}_2(\Fp)$ consists of $4h(-p)$ vertices organized into $h(-p)$ \emph{tripod} formations. Each tripod consists of a single vertex on the surface that is adjacent to three distinct vertices on the floor. 

        \begin{center}
            \begin{tikzpicture}
                \node[circle,fill,scale=0.5] (a) at (0,0) {};
                \node[circle,fill,scale=0.5] (b) at (2,0) {};
                \node[circle,fill,scale=0.5] (a1) at (-.5,-1) {};
                \node[circle,fill,scale=0.5] (a2) at (0,-1) {};
                \node[circle,fill,scale=0.5] (a3) at (.5,-1) {};
                
                \node[circle,fill,scale=0.5] (b1) at (1.5,-1) {};
                \node[circle,fill,scale=0.5] (b2) at (2,-1) {};
                \node[circle,fill,scale=0.5] (b3) at (2.5,-1) {};

                \draw[-] (a) to (a1);
                \draw[-] (a) to (a2);
                \draw[-] (a) to (a3);
                \draw[-] (b) to (b1);
                \draw[-] (b) to (b2);
                \draw[-] (b) to (b3);
            \end{tikzpicture}
        \end{center}

        \item If $\ell = 2$ and $p\equiv 7\pmod{8}$, then $\mathcal{G}_2(\Fp)$ consists of $2h(-p)$ vertices, organized into \emph{volcanoes}.         Each volcano contains a (possibly degenerate) cycle consisting of vertices on the surface, each of which is adjacent to a unique vertex on the floor. 
        \begin{center}
        \begin{tikzpicture}
\node[circle,fill,scale=0.5] (E1) at (1/3,3/3) {};
\node[circle,fill,scale=0.5] (E1') at (1/3,1/6) {};
\node[circle,fill,scale=0.5] (E2) at (2/3,1/3) {};
\node[circle,fill,scale=0.5] (E2') at (2/3,-3/6) {};
\node[circle,fill,scale=0.5] (E3) at (3.5/3,4/3) {};
\node[circle,fill,scale=0.5] (E3') at (3.5/3,3/6) {};
\node[circle,fill,scale=0.5] (E4) at (5/3,1/3) {};
\node[circle,fill,scale=0.5] (E4') at (5/3,-3/6) {};
\node[circle,fill,scale=0.5] (E5) at (6/3,3/3) {};
\node[circle,fill,scale=0.5] (E5') at (6/3,1/6) {};

\draw[-] (E1) to (E2);
\draw[-] (E1) to (E1');
\draw[-] (E2) to (E2');
\draw[-] (E3) to (E3');
\draw[-] (E4) to (E4');
\draw[-] (E5) to (E5');
\draw[-] (E3) to (E1);
\draw[-] (E3) to (E5);
\draw[-] (E4) to (E5);
\draw[-] (E4) to (E2);

\end{tikzpicture}
\end{center}
\item If $\ell > 2$, with $\ell \ne p$, then $\gf$ is a disoint union of (possibly degenerate) cycles. If $p\equiv 3 \pmod{4}$, then each cycle contains either surface vertices or floor vertices, but not both. 

\begin{center}
\begin{tikzpicture}
\node[circle,fill,scale=0.5] (E1) at (1/3,3/3) {};
\node[circle,fill,scale=0.5] (E2) at (2/3,1/3) {};
\node[circle,fill,scale=0.5] (E3) at (3.5/3,4/3) {};
\node[circle,fill,scale=0.5] (E4) at (5/3,1/3) {};
\node[circle,fill,scale=0.5] (E5) at (6/3,3/3) {};

\draw[-] (E1) to (E2);
\draw[-] (E3) to (E1);
\draw[-] (E3) to (E5);
\draw[-] (E4) to (E5);
\draw[-] (E4) to (E2);

\end{tikzpicture}
\end{center}
\end{enumerate}

In cases (3) and (4), the length of each cycle is the order of the class generated by a prime ideal above~$\ell$ in the class group of the corresponding quadratic order. When $p \equiv 3 \pmod{4}$, vertices on the surface may be incident with loop edges. 
\end{theorem}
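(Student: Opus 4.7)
My plan is to combine CM theory with Kohel's theory of isogeny volcanoes, and then handle the prime $\ell = 2$ separately because it divides the conductor between the two possible $\mathbb{F}_p$-endomorphism rings when $p \equiv 3 \pmod 4$.

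For the vertex counts, I would invoke the Waterhouse/CM classification: $\mathbb{F}_p$-isomorphism classes of supersingular elliptic curves with a prescribed $\mathbb{F}_p$-endomorphism ring $\mathcal{O} \subseteq \mathbb{Q}(\sqrt{-p})$ are in bijection with $\mathrm{Cl}(\mathcal{O})$, after identifying a curve with its twist when they become $\mathbb{F}_p$-isomorphic (as occurs when $p \equiv 1 \pmod 4$). Combined with Proposition \ref{prop:classno}, this yields $h(-4p)$ floor vertices when $p \equiv 1 \pmod 4$, and an $h(-p)$-vertex surface plus a $3h(-p)$-vertex floor when $p \equiv 3 \pmod 4$. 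The stated totals $4h(-p)$ in case (2) and $2h(-p)$ in case (3) then follow from the edge analysis below, which dictates how many distinct floor vertices actually sit beneath a given surface vertex.

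Next, I would classify edges via Kohel's volcano theory: each $\mathbb{F}_p$-rational $\ell$-isogeny is horizontal, ascending, or descending according to whether it preserves, enlarges, or contracts the $\mathbb{F}_p$-endomorphism ring. The number of horizontal $\ell$-edges at a vertex with endomorphism ring $\mathcal{O}$ is $1 + \left(\tfrac{\mathrm{disc}(\mathcal{O})}{\ell}\right)$, a floor vertex with non-maximal endomorphism ring admits exactly one ascending edge, and a surface vertex admits $\ell - \left(\tfrac{-p}{\ell}\right)$ descending edges. For $\ell \geq 3$ coprime to $p$, the conductor $2$ separating the two orders is coprime to $\ell$, so both orders see $\ell$ with identical splitting behavior. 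Every vertex then has exactly $1 + \left(\tfrac{-p}{\ell}\right)$ horizontal edges and no vertical edges, producing disjoint cycles whose length equals the order of a prime class above $\ell$ in the relevant class group; when $p \equiv 3 \pmod 4$ the surface and floor components are entirely disjoint. This proves case~(4).

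The main obstacle is $\ell = 2$, whose behavior depends on $p \pmod 8$. When $p \equiv 1 \pmod 4$ only the floor exists, $2$ ramifies in $\mathbb{Z}[\sqrt{-p}]$, and each vertex has a single horizontal $2$-edge; I would invoke genus theory, building on Proposition \ref{prop:classno}, to conclude that the ramified prime class has order exactly $2$ in $\mathrm{Cl}(-4p)$, giving adjacent pairs with no loops (case (1)). When $p \equiv 3 \pmod 4$, the key Kronecker-symbol computation is that $2$ is inert in $\mathbb{Z}[(1+\sqrt{-p})/2]$ iff $p \equiv 3 \pmod 8$ and split iff $p \equiv 7 \pmod 8$. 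Inert at the surface forces all three $2$-isogenies from a surface vertex to descend, producing the tripod of case~(2); split at the surface produces two horizontal edges plus one descending edge, yielding the volcano rim and single floor pendant of case~(3). Since $2$ always ramifies in $\mathbb{Z}[\sqrt{-p}]$, the corresponding floor edge is the unique ascending one, so each floor vertex hangs off exactly one surface parent. The rim cycle length in case~(3) and the loop possibility on the surface (occurring exactly when the class of the prime above $\ell$ is fixed by complex conjugation, hence has order dividing~$2$) then drop out from the class group action.
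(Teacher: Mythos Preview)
The paper does not supply its own proof of this theorem; it is quoted verbatim from Delfs--Galbraith. Your outline---Deuring/Waterhouse for the vertex counts together with Kohel's horizontal/ascending/descending trichotomy for the edges, followed by a case split on the splitting of $2$ modulo $8$---is exactly the Delfs--Galbraith strategy, so in spirit you are reproducing the intended argument.

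There is, however, a genuine inconsistency in your vertex count for case~(3). You invoke Proposition~\ref{prop:classno} to claim a $3h(-p)$-vertex floor for \emph{all} $p\equiv 3\pmod 4$, but the conductor formula actually gives
\[
h(-4p)\;=\;2h(-p)\Bigl(1-\tfrac{1}{2}\bigl(\tfrac{-p}{2}\bigr)\Bigr)
\;=\;
\begin{cases}
3h(-p) & p\equiv 3\pmod 8,\\[2pt]
h(-p) & p\equiv 7\pmod 8,
\end{cases}
\]
so Proposition~\ref{prop:classno} as stated is only valid for $p\equiv 3\pmod 8$ (check $p=7,23,31$). Your follow-up sentence, that the totals $4h(-p)$ and $2h(-p)$ ``follow from the edge analysis below, which dictates how many distinct floor vertices actually sit beneath a given surface vertex,'' is a different and in fact correct route to the count---each floor vertex has a unique ascending $2$-isogeny, so the number of floor vertices equals the number of descending edges from the surface---but it flatly contradicts the $3h(-p)$ floor you just asserted when $p\equiv 7\pmod 8$. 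You should drop the appeal to Proposition~\ref{prop:classno} here and either compute $h(-4p)$ correctly in each residue class or derive the floor count directly from the edge bijection. A smaller slip at the end: loops on the surface correspond to \emph{principal} primes above $\ell$ (i.e.\ elements of norm $\ell$), so ideal class of order exactly~$1$; classes of order~$2$ give $2$-cycles, not loops.
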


Note that cycles may be degenerate, i.e.\ consist of one vertex only. In particular, for $\ell > 2$, $\gf$ may consist of isolated vertices (possibly with loops); for example, when the Legendre symbol $(\frac{-p}{\ell}) = -1$ or $\mathbb{Q}(\sqrt{-p})$ has class number~1. 

We characterize loops and multi-edges in $\gf$ when $p > \ell$. 

\begin{proposition}[Loops in $\gf$] \label{prop:loops}
    Suppose $p > \ell$. 
    \begin{enumerate}
        \item If $p \equiv 3 \pmod{4}$, then a vertex on the surface of $\gf$ is incident with a loop if and only if $4\ell - p$ is a perfect square. In this case, every vertex on the surface is incident with two distinct pairs of loops, each pair corresponding to a degree $\ell$ endomorphism and its dual. 

        \begin{center}
            \begin{tikzpicture}
            \node (A) at (0, 0) [draw,fill,circle,scale=0.5] {};
            \draw (0.45,0) ellipse (0.45 and 0.225);
            \draw (0.6,0) ellipse (0.6 and 0.4);
            \draw (-0.45,0) ellipse (0.45 and 0.225);
            \draw (-0.6,0) ellipse (0.6 and 0.4);
            \end{tikzpicture}
        \end{center}
              
        \item No vertex of $\gf$ on the floor is incident with a loop.
    \end{enumerate}
\end{proposition}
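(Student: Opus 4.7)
The plan is to translate loops into norm-$\ell$ elements of the $\Fp$-endomorphism ring and then carry out an elementary norm-form analysis in each of the two cases.

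An $\Fp$-rational $\ell$-isogeny from $E$ to itself is, up to $\Fp$-equivalence, nothing other than an element of norm $\ell$ in $\text{End}_{\Fp}(E)$, so the existence of a loop at $E$ in $\gf$ is equivalent to the solvability of a norm equation in the corresponding imaginary quadratic order described in the preceding discussion. For a surface vertex (which exists only when $p \equiv 3 \pmod 4$), this order is $\ZZ[\omega]$ with $\omega = (1+\sqrt{-p})/2$. Writing $\alpha = a + b\omega$ and clearing denominators, $N(\alpha) = \ell$ becomes
\begin{equation*}
(2a + b)^2 + p b^2 = 4\ell.
\end{equation*}
The hypothesis $p > \ell$ forces $b^2 < 4\ell/p < 4$, hence $b \in \{-1, 0, 1\}$. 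The case $b = 0$ gives $a^2 = \ell$, impossible for $\ell$ prime. The case $|b|=1$ reduces to $(2a \pm 1)^2 = 4\ell - p$, which has an integer solution precisely when $4\ell - p$ is a non-negative perfect square; the required parity of $2a \pm 1$ is automatic since $4\ell - p \equiv 1 \pmod 4$. When $4\ell - p = x_0^2$, the four resulting norm-$\ell$ elements are $\pm \alpha$ and $\pm \bar\alpha$ with $\alpha = (x_0 + \sqrt{-p})/2$, which I would match with the two endomorphism-and-dual pairs of loops asserted in the statement.

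For part (2), a floor vertex has $\Fp$-endomorphism ring $\ZZ[\sqrt{-p}]$, so $N(\alpha) = \ell$ takes the simpler form $a^2 + p b^2 = \ell$. Under $p > \ell$, any $|b| \ge 1$ yields $p b^2 > \ell$, so $b$ must vanish and $a^2 = \ell$, again impossible. Hence no floor vertex carries a loop. The arguments are entirely elementary diophantine analysis, so the only real care needed is in part (1), where one must correctly reconcile the two identifications at play --- dualization $\alpha \leftrightarrow \bar\alpha$ and post-composition with $\pm 1 \in \text{Aut}_{\Fp}(E)$ --- in order to match the four norm-$\ell$ elements with the two distinct pairs of loops drawn in the statement; everything else reduces to bounding $b$ in the norm form.
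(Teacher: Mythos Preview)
Your proposal is correct and follows essentially the same argument as the paper: both translate loops into norm-$\ell$ elements of the relevant quadratic order, bound $|b|$ via $p>\ell$ in the norm form $(2a+b)^2+pb^2=4\ell$ (resp.\ $a^2+pb^2=\ell$), and rule out $b=0$ by primality of $\ell$. Your remark on the automatic parity $4\ell-p\equiv 1\pmod 4$ mirrors the paper's observation that $m$ is odd, and your identification of the four norm-$\ell$ elements $\pm\alpha,\pm\bar\alpha$ matches the paper's $(\pm m\pm\sqrt{-p})/2$.
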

\begin{proof}
    Loops in $\gf$ correspond to degree $\ell$ endomorphisms over $\Fp$, which in turn correspond to elements of norm $\ell$ in the appropriate quadratic order. 

    \begin{enumerate}
        \item For brevity, put $\omega = (1+\sqrt{-p})/2$ and let $\alpha = a + b\omega \in \ZZ[\omega]$ be the element of norm $\ell$ corresponding to loop incident with a vertex on the surface of~$\gf$. Since $p > \ell$, we obtain 
        \[ 4p > 4\ell = 4N(\alpha) = (2a+b)^2 +  b^2p \ge b^2p . \]
        so $|b| \le 1$. If $b = 0$, then $\ell = N(\alpha) = a^2$ which is impossible since $\ell$ is prime. This forces $b = \pm 1$, so $4\ell - p = (2a\pm 1)^2$ is a perfect square. Conversely, if  $4\ell - p = m^2$ with $m \in \ZZ$, then $m$ must be odd. Thus, $\alpha = a + \omega \in \ZZ[\omega]$ with $a = (m-1)/2$ is an element of norm~$\ell$. Note that $\alpha = (m+\sqrt{-p})/2$, so the 4 loops at $j$ correspond to the 4 elements $(\pm m \pm \sqrt{-p})/2 \in \ZZ[\omega]$ of norm $\ell$. 

        \item Now let $\alpha = a + b\sqrt{-p} \in \ZZ[\sqrt{-p}]$. Then $N(\alpha) = a^2 + b^2 p$. If $b \ne 0$, then $N(\alpha) \ge p > \ell$, wheres if $b = 0$, then $N(\alpha) = a^2$ which is not prime and hence also distinct from $\ell$. Hence $\ZZ[\sqrt{-p}]$ contains no elements of norm~$\ell$. \qedhere
    \end{enumerate}
\end{proof}

In particular, $\gf$ can only contain loops if $p \equiv 3 \pmod{4}$ and $p \le 4\ell - 1$. Applying this to $\ell = 2, 3$ immediately yields the following.

\begin{corollary}[Loops in $\gf$ for $\ell = 2, 3$] \label{cor:loops23}
For $\ell = 2, 3$ and $p > \ell$, the graph $\gf$ contains no loops except when $(\ell, p) \in \{ (2,7), (3, 11)\}$, where $\gf$ has one vertex that is incident with two pairs of loops. 
\end{corollary}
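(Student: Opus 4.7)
The plan is to derive this directly from Proposition \ref{prop:loops}. By part (2) of that proposition, no floor vertex of $\gf$ carries a loop, so loops can only appear at surface vertices. Part (1) then tells us that such loops exist if and only if $p \equiv 3 \pmod 4$ and $4\ell - p$ is a perfect square, in which case every surface vertex carries exactly two distinct pairs of loops. So the task reduces to determining for which primes $p > \ell$ with $p \equiv 3 \pmod 4$ the integer $4\ell - p$ is a perfect square, and then counting surface vertices in those exceptional cases.

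First I would observe that if $4\ell - p$ is a nonnegative perfect square and $p > \ell$, then $\ell < p \le 4\ell$; since $4\ell$ is composite for $\ell \ge 2$, the prime $p$ must lie strictly between $\ell$ and $4\ell$. For $\ell = 2$, the only primes in the interval $(2,8)$ with $p \equiv 3 \pmod 4$ are $p = 3$ and $p = 7$; checking, $8 - 3 = 5$ is not a square but $8 - 7 = 1$ is. For $\ell = 3$, the only primes in $(3,12)$ with $p \equiv 3 \pmod 4$ are $p = 7$ and $p = 11$; checking, $12 - 7 = 5$ is not a square but $12 - 11 = 1$ is. This pins down the two exceptional pairs $(\ell,p) \in \{(2,7),(3,11)\}$ asserted in the statement.

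Finally, I would verify that in each of these two cases the surface of $\gf$ consists of a single vertex, so that the phrase \emph{one vertex} is justified. For $(\ell,p) = (2,7)$, we have $p \equiv 7 \pmod 8$, so Theorem \ref{thm:DG}(3) applies and the number of surface vertices is $h(-p) = h(-7) = 1$. For $(\ell,p) = (3,11)$, Theorem \ref{thm:DG}(4) applies and the surface consists of $h(-p) = h(-11) = 1$ vertex, forming a degenerate cycle. In both cases Proposition \ref{prop:loops}(1) then attaches two distinct pairs of loops to this unique surface vertex, completing the proof.

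There is essentially no substantive obstacle here: the argument is a direct enumeration enabled by the strong constraint $p < 4\ell$. The one detail to be careful about is the counting step in the last paragraph, to make sure we quote the correct branch of Theorem \ref{thm:DG} for each case and confirm that $h(-7) = h(-11) = 1$, so that the exceptional loops are concentrated at a single vertex rather than several.
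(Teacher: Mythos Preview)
Your proof is correct and follows the same route as the paper, which simply notes (in the sentence preceding the corollary) that Proposition~\ref{prop:loops} forces $p \equiv 3 \pmod 4$ and $p \le 4\ell - 1$, and declares the corollary immediate from there. Your write-up is actually more thorough than the paper's: you explicitly carry out the enumeration of primes in $(\ell,4\ell)$ and, in addition, verify via Theorem~\ref{thm:DG} and the class numbers $h(-7)=h(-11)=1$ that the surface consists of a single vertex in each exceptional case, a point the paper leaves implicit.
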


We now turn to multi-edges in $\gf$ that are not loops. 

\begin{proposition}[Multi-edges in $\gf$] \label{prop:multi-edges}
    Suppose $p > \ell$. 
    \begin{enumerate}
        \item  If $p \equiv 3 \pmod{4}$, then $\gf$ contains no directed multi-edges except possibly loops unless $(\ell, p) = (2,3)$. Here, $\mathcal{G}_2(\mathbb{F}_3)$ has one directed triple edge. 
        
         \begin{center}
            \usetikzlibrary {arrows.meta}
            \begin{tikzpicture}
                \node (A) at (-1.5, 0) [draw,fill,circle,scale=0.5] {};
                \node (B) at (1.5, 0) [draw,fill,circle,scale=0.5] {};
                \draw[->, >=latex, bend left=28] (A) to (B);  
                \draw[->, >=latex, bend left=8] (A) to (B); 
                \draw[->, >=latex, bend right=-28] (B) to (A); 
                \draw[->, >=latex, bend right=8] (A) to (B); 
            \end{tikzpicture}
        \end{center}
        \item If $p \equiv 1 \pmod{4}$, then $\gf$ contains directed multi-edges if and only if it has at least two vertices and $2\ell - p$ is a perfect square. In this case, the vertices in $\gf$ are joined pairwise by pairs of double edges in opposite directions. 
       
        \begin{center}
            \usetikzlibrary {arrows.meta}
            \begin{tikzpicture}
                \node (A) at (-1.5, 0) [draw,fill,circle,scale=0.5] {};
                \node (B) at (1.5, 0) [draw,fill,circle,scale=0.5] {};
                \draw[->, >=latex, bend left=28] (A) to (B);  
                \draw[->, >=latex, bend left=8] (A) to (B); 
                \draw[->, >=latex, bend right=-28] (B) to (A); 
                \draw[->, >=latex, bend right=-8] (B) to (A); 
            \end{tikzpicture}
        \end{center}
    \end{enumerate}
   
\end{proposition}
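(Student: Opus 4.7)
The plan is to characterize a directed multi-edge from $E$ to a distinct vertex $E'$ in $\gf$ via the existence of an element of norm $\ell^2$ in $\mathcal{O} = \mathrm{End}_{\Fp}(E)$ that is not of the form $\pm\ell$ (modulo units). Given two distinct $\Fp$-rational $\ell$-isogenies $\phi_1, \phi_2 \colon E \to E'$, the composition $\widehat{\phi_2} \circ \phi_1$ lies in $\mathcal{O}$ and has norm $\ell^2$, with kernel a proper subgroup of $E[\ell]$ rather than all of it. Conversely, any $\alpha \in \mathcal{O}$ of norm $\ell^2$ whose principal ideal $\alpha\mathcal{O}$ is distinct from $\ell\mathcal{O}$ generates $\mathfrak{l}^2$ or $\bar{\mathfrak{l}}^2$ for some prime $\mathfrak{l}$ above $\ell$ (in particular $\ell$ must split in $\mathcal{O}$), and the geometry of $[\alpha]$ then either produces a genuine multi-edge between distinct vertices or forces a loop structure on the two-step factorization.

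For part~(1), I would invoke Proposition~\ref{prop:classno}: when $p \equiv 3 \pmod 4$, both $h(-p)$ and $h(-4p) = 3h(-p)$ are odd, so the class groups of both $\ZZ[(1+\sqrt{-p})/2]$ and $\ZZ[\sqrt{-p}]$ have odd order. If $\mathfrak{l}^2$ is principal in such a class group then so is $\mathfrak{l}$ itself, yielding an element $\beta \in \mathcal{O}$ of norm $\ell$ and hence a loop at $E$ by Proposition~\ref{prop:loops}. For $p > 3$ the units of $\mathcal{O}$ are $\pm 1$, so any nontrivial $\alpha$ of norm $\ell^2$ equals $\pm\beta^2$. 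One then checks that $\ker[\alpha] = E[\mathfrak{l}^2]$ is cyclic of order $\ell^2$, so its unique order-$\ell$ subgroup is $E[\mathfrak{l}] = \ker[\beta]$, forcing the intermediate curve in the two-step factorization of $[\alpha]$ to be $E$ itself; thus the candidate ``multi-edge'' is just a loop composed with a loop. By Corollary~\ref{cor:loops23}, such loops only occur for $(\ell, p) \in \{(2, 7), (3, 11)\}$, so for every other $p \equiv 3 \pmod 4$ with $p > \ell$ there are no multi-edges between distinct vertices nor loops. The exceptional case $(\ell, p) = (2, 3)$ lies outside the hypothesis $p \geq 5$ of Theorem~\ref{thm:DG}, and I would verify the triple edge directly: the curve $y^2 = x^3 - x$ over $\mathbb{F}_3$ has its full 2-torsion defined over $\Fp$, and V\'elu's formulas show that its three order-2 subgroups all quotient to the same nontrivial twist.

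For part~(2), the order $\mathcal{O} = \ZZ[\sqrt{-p}]$ is already maximal since $p \equiv 1 \pmod 4$, and I would parametrise elements of norm $\ell^2$ via $a^2 + pb^2 = \ell^2$. Factoring $pb^2 = (\ell - a)(\ell + a)$ and using that $p$ is prime with $p > \ell \geq |\ell - a|$ (after reducing to $a \geq 0$) forces $p \mid \ell + a$; writing $\ell + a = pm$ and substituting yields $b^2 = m(2\ell - pm)$. Positivity of $b^2$ forces $m < 2\ell/p < 2$, so $m = 1$, giving the unique nontrivial family $b^2 = 2\ell - p$, $a = p - \ell$. Hence nontrivial elements of norm $\ell^2$ in $\mathcal{O}$ exist exactly when $2\ell - p$ is a nonnegative perfect square. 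When this holds, $\mathfrak{l}^2 = \alpha\mathcal{O}$ is principal while $\mathfrak{l}$ is not (the equation $a^2 + pb^2 = \ell$ has no solution for $p > \ell$, as it would force $b = 0$ and $a^2 = \ell$), so $\mathfrak{l}$ has order exactly two in the class group and $\mathfrak{l}, \bar{\mathfrak{l}}$ lie in a single class. This yields two distinct $\ell$-isogenies from $E$ to a common neighbour $E'$, and symmetrically two in the reverse direction, producing the claimed pair of double edges. The main obstacle is ensuring that the ideal-theoretic picture translates faithfully to the geometry of edges in $\gf$: one must confirm that each nontrivial ideal class produces a distinct $\Fp$-isomorphism class of targets, and handle the degenerate case $p = 3$ (where the enlarged unit group of $\mathcal{O}$ makes the ``$\alpha\mathcal{O} \ne \ell\mathcal{O}$'' criterion fail to detect the multi-edges that arise from extra automorphisms acting on the 2-torsion) by explicit computation.
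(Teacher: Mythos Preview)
Your proposal is correct and follows essentially the same route as the paper: both reduce a non-loop multi-edge $\phi_1,\phi_2:E\to E'$ to an element $\alpha=\widehat{\phi_2}\phi_1\in\mathcal{O}$ of norm $\ell^2$ with $\alpha\ne\pm\ell$, then in part~(1) use oddness of $h(-p)$ to force $\mathfrak{l}$ principal (hence $\alpha=\pm\beta^2$), and in part~(2) carry out the identical Diophantine analysis of $a^2+pb^2=\ell^2$ to obtain the condition $2\ell-p=\square$. The only substantive difference is expository: the paper simply builds ``not a square up to sign'' into its characterisation of which $\alpha$ yield genuine multi-edges, whereas you spell out the geometric reason (the unique order-$\ell$ subgroup of the cyclic kernel $E[\mathfrak{l}^2]$ forces $E'\cong_{\Fp}E$), and the paper separates the inert case explicitly to isolate $p=3$ via nontrivial units, while you fold that into the ``$\alpha\mathcal{O}\ne\ell\mathcal{O}$ forces $\ell$ split'' observation together with the remark that units are $\pm1$ for $p>3$.
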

\begin{proof}
   A directed multi-edge in $\gf$ that is not a loop corresponds to multiple $\Fp$-non-equivalent $\ell$-isogenies $\phi, \psi : E \rightarrow E'$ where $E$ and $E'$ are not isomorphic over $\Fp$. Then $\hat{\phi}\psi$ is an endomorphism on $E$ of degree $\ell^2$, and we have a multi-edge if and only if $\hat{\phi}\psi$ is not the multiplication by $\ell$ map on $E$. Thus, multi-edges arise from elements $\alpha \ne \pm \ell$ of norm $\ell^2$ in the corresponding quadratic order which are not squares up to sign.     
    
    \begin{enumerate}
        \item Assume $p \equiv 3 \pmod{4}$. Note that $\ell$ does not ramify in $\ZZ[\omega]$ as $\ell \ne p$. Suppose first that $\ell$ splits in $\ZZ[\omega]$, and write $(\ell) = \mathfrak{l}\overline{\mathfrak{l}}$ with prime ideals~$\mathfrak{l}, \overline{\mathfrak{l}}$ of $\ZZ[\omega]$. Note that this precludes the case $p = 3$, as $\ell < p$ implies $\ell = 2$ in this case, but 2 is inert in $\mathbb{Q}(\sqrt{-3})$. Let $\alpha \in \ZZ[\omega]$ with $\alpha \ne \pm \ell$ and $N(\alpha) = \ell^2$. Unique prime ideal factorization, together with $(\alpha) \ne (\ell)$, forces $(\alpha) = \mathfrak{l}^2$ or $(\alpha)= \overline{\mathfrak{l}}^2$. Assume the former; the case $(\alpha) = \overline{\mathfrak{l}}^2$ is entirely analogous. Then~$\mathfrak{l}^2$ is principal. Since  $h(-p)$ is odd by Proposition \ref{prop:classno}, $\mathfrak{l}$~must be principal. This means that $\alpha$ is a square in $\ZZ[\omega]$ up to sign, which we precluded. 

        Now assume that $\ell$ is inert in $\ZZ[\omega]$. Since $(\ell)$ is the only prime ideal of norm $\ell^2$, unique prime ideal factorization implies $(\alpha) = (\ell)$. The assumption $\alpha \ne \ell$ now forces $p = 3$ (the only case where $\ZZ[\omega]$ has non-trivial units) and hence $\ell = 2$ since $\ell < p$. Thus, $\alpha = \pm 2 \zeta^k$ where $k = 1, 2$ and $\zeta$ is a primitive cube root of unity, so $\pm \alpha \in \{ 1 + \sqrt{-3}, 1 - \sqrt{-3} \}$. Indeed, among the four $\mathbb{F}_3$-isomorphism classes of supersingular elliptic curves over $\mathbb{F}_3$, exactly two, represented by the curves  $E_{\pm} : y^2 = x^3 \pm x$, are 2-isogenous over~$\mathbb{F}_3$. The curve $E_+$ is on the floor, whereas $E_-$ is on the surface and has, up to sign, three $\mathbb{F}_3$-rational automorphisms $(x,y) \mapsto (x+i, y)$ for $i = 0, 1, 2$. So there are three $\mathbb{F}_3$-inequivalent 2-isogenies from $E_-$ to $E_+$, producing three directed edges. Their duals differ only by post-composition by an automorphism  on $E_-$ and are hence equivalent, yielding one edge in the opposite direction. 

        \item  Now suppose $p \equiv 1 \pmod{4}$. Let $\alpha \in \ZZ[\sqrt{-p}]$ with $N(\alpha) = \ell^2$, $\alpha \ne \pm \ell$ and $\pm \alpha$ not a square in $\ZZ[\sqrt{-p}]$. Write $\alpha = a+b\sqrt{-p}$ with $a, b \in \ZZ$. Then $a^2 + b^2 p = \ell^2$ and $b \ne 0$, so $|a| < \ell$. We have
        \begin{equation} \label{eq:p}
            b^2 p = \ell^2 - a^2 = (\ell-|a|)(\ell+|a|) ,  
        \end{equation}
        so $p$ divides $\ell-|a|$ or $\ell+|a|$. Since $1 \le \ell - |a| \le \ell < p$, the first of these possibilities cannot happen; hence $p \mid \ell + |a|$. Write $\ell + |a| = kp$ for some $k \in \ZZ$. Then $k \ge 1$ and $k\ell < kp = \ell + |a| < 2\ell$, forcing $k = 1$ and hence $p = \ell + |a|$. By \eqref{eq:p}, we have $b^2 = \ell - |a| = 2\ell - p$, so $2\ell - p$ is a perfect square. Conversely, if $2\ell - p = b^2$ for some $b \in \ZZ$, then the 4 elements $\alpha = \pm (p-\ell) \pm b\sqrt{-p}$ all have norm $\ell^2$, are distinct from $\pm\ell$, and are not squares in $\ZZ[\sqrt{-p}]$.    
    
        Note that as in the case $p \equiv 3 \pmod{4}$, we have $(\alpha) = \mathfrak{l}^2$ or $(\alpha) = \overline{\mathfrak{l}}^2$, where  $\mathfrak{l}$ and $\overline{\mathfrak{l}}$ are the two prime ideals above $(\ell)$ in $\ZZ[\sqrt{-p}]$. So the ideal class of $\mathfrak{l}$ has order 2 in the class group of $\mathbb{Q}(\sqrt{-p})$. This means that if $\gf$ has more than one vertex and $2\ell - p$ is a perfect square (which rules out $\ell = 2$ as $p \ge 5$), all the cycles in $\gf$ as described in part (4) of Theorem \ref{thm:DG} have length 2. They correspond to two pairs of directed edges and their respective duals. \qedhere
     \end{enumerate}
\end{proof}   

Proposition \ref{prop:multi-edges} shows that if $p > \ell > 2$, then $\gf$ can only contain directed double edges when $p \equiv 1 \pmod{4}$ and $p \le 2\ell-1$. The cases $\ell = 2, 3$ can now once again be easily deduced.

\begin{corollary}[Multi-edges in $\gf$ for $\ell = 2, 3$] \label{cor:multi23}
For $\ell = 2, 3$  and $p > \ell$, apart from the loops of Corollary \ref{cor:loops23}, $\gf$ contains no directed multi-edges except when $(\ell, p) \in \{ (2,3), (3,5) \}$. 
\end{corollary}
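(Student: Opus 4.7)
The plan is to deduce this corollary directly from Proposition \ref{prop:multi-edges} by specializing to $\ell \in \{2, 3\}$ and splitting on the residue of $p$ modulo~$4$.

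First I would handle the case $p \equiv 3 \pmod 4$. Proposition \ref{prop:multi-edges}(1) rules out all non-loop directed multi-edges except when $(\ell, p) = (2, 3)$, which is exactly the first exception listed in the corollary. For $\ell = 3$, the hypothesis $p > \ell$ forces $p \geq 7$ in this congruence class, so no further exceptions can occur here.

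Next I would handle $p \equiv 1 \pmod 4$, forcing $p \geq 5$. By Proposition \ref{prop:multi-edges}(2), non-loop directed multi-edges occur if and only if $\gf$ has at least two vertices and $2\ell - p$ is a perfect square. For $\ell = 2$ the quantity $4 - p$ is negative for every $p \geq 5$, ruling out multi-edges entirely. For $\ell = 3$ the quantity $6 - p$ is a nonnegative perfect square only for $p = 5$ (giving $1 = 1^2$), and $5 \equiv 1 \pmod 4$, so this case indeed occurs. To verify that multi-edges truly appear in $\mathcal{G}_3(\mathbb{F}_5)$, I would confirm the two-vertex hypothesis via the class number computation $h(-20) = 2$, which matches the vertex count from Theorem \ref{thm:DG}.

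The argument is essentially a routine specialization of Proposition \ref{prop:multi-edges}, and I do not foresee any real obstacle: the arithmetic content reduces to checking the small values of $2\ell - p$ for $\ell \in \{2, 3\}$ and evaluating $h(-20)$. The explicit multi-edge patterns described in Proposition \ref{prop:multi-edges}, namely a directed triple edge in the $(2, 3)$ case and pairs of oppositely directed double edges in the $(3, 5)$ case, then confirm that the two listed exceptions are genuine and not vacuous.
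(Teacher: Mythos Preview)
Your proposal is correct and follows exactly the route the paper intends: the paper states just before the corollary that ``the cases $\ell = 2, 3$ can now once again be easily deduced'' from Proposition~\ref{prop:multi-edges} and gives no further proof, so your case split on $p \bmod 4$ together with the specialization of the perfect-square condition $2\ell - p$ is precisely the argument being left to the reader. Your additional check that $h(-20)=2$ to confirm the two-vertex hypothesis for $(\ell,p)=(3,5)$ is a nice touch that the paper omits.
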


A list of prime pairs $(\ell, p)$ with $2 \le \ell < 100$ and $p > \ell$ for which $\gf$ has directed multi-edges, including loops, can be found in the file name \href{https://github.com/TahaHedayat/LUCANT-2025-Supersingular-Ell-Isogeny-Spine/blob/main/loops%20%26%20multi-edges%20in%20G_l(Fp).pdf}{\texttt{loops \& multi-edges in G\_l(Fp).pdf}} at the GitHub repository \cite{LUCANTGitHub}. 
A notebook with code to generate visual depictions of $\gf$ and $\gfb$, entitled \href{https://github.com/TahaHedayat/LUCANT-2025-Supersingular-Ell-Isogeny-Spine/blob/main/Graph_Viz.ipynb}{\texttt{Graph\_Viz.ipynb}}, is also available at \cite{LUCANTGitHub}.

\section{The spine of $\gfb$} \label{sec:spine}

In this section, we review the relationship between $\gf$ and $\gfb$; specifically the process of moving from $\Fp$-isomorphism classes of elliptic curves to $\Fpbar$-isomorphism classes, and from $\ell$-isogenies defined over $\Fp$ to those defined over $\Fpbar$. This material is a summary of \cite{Arpin}. 

\begin{definition}[Spine]
    The \emph{spine} $\slp$ is the subgraph of $\gfb$ induced by the vertices in $\Fp$. Specifically, the vertices of $\slp$ are the $\Fpbar$-isomorphism classes of supersingular elliptic curves defined over $\Fp$, and its edges are all the $\ell$-isogenies joining these vertices.
\end{definition}

There are natural maps between the graphs $\gf$, $\gfb$, and $\slp$: 

\begin{definition}\label{def:gftospfunctions} \label{def:maps}
    Define $\Gamma: \gf \rightarrow \gfb$ to take vertices of $\gf$ to their $\Fpbar$-isomorphism classes  and edges to their $\Fpbar$-equivalence classes.
    
    Next, define $\Theta: \mbox{Im}(\Gamma) \rightarrow \gfb$ to add edges between vertices in Im$(\Gamma)$ that correspond to isogenies defined over $\Fpbar$ and not defined over $\Fp$. In particular, $\Theta(\text{Im}(\Gamma)) = \slp$.
    
    Lastly, define $\Omega = \Theta \circ \Gamma: \gf \rightarrow \slp\subseteq \gfb$.
\end{definition}

When $\Omega$ is applied to $\gf$, the following graph structural changes are possible:
\begin{definition} \textcolor{white}{.} \label{def:s-f-ea-va}
    \begin{itemize}
        \item \textbf{Stacking:} Two connected components of $\gf$ \emph{stack} under the map $\Gamma$ of Definition \ref{def:maps} if they are the same graph when labeled by $j$-invariants.

        \begin{center}
            \begin{tikzpicture}
                \node[circle,draw,scale=0.5] (a) at (0,0) {$j_1$};
                \node[circle,draw,scale=0.5] (a1) at (-.5,-1) {$j_2$};
                \node[circle,draw,scale=0.5] (a2) at (0,-1) {$j_3$};
                \node[circle,draw,scale=0.5] (a3) at (.5,-1) {$j_4$};
                \node[circle,draw,scale=0.5] (b) at (2,0) {$j_1$};
                \node[circle,draw,scale=0.5] (b1) at (1.5,-1) {$j_2$};
                \node[circle,draw,scale=0.5] (b2) at (2,-1) {$j_3$};
                \node[circle,draw,scale=0.5] (b3) at (2.5,-1) {$j_4$};

                \draw[-] (a) to (a1);
                \draw[-] (a) to (a2);
                \draw[-] (a) to (a3);
                \draw[-] (b) to (b1);
                \draw[-] (b) to (b2);
                \draw[-] (b) to (b3);
                \draw[->] (3,-.5) to (4,-.5);
                \node[] (label) at (3.5,0) {$\Gamma$};

                \node[circle,draw,scale=0.5] (c) at (5,0) {$j_1$};
                \node[circle,draw,scale=0.5] (c1) at (4.5,-1) {$j_2$};
                \node[circle,draw,scale=0.5] (c2) at (5.5,-1) {$j_3$};
                \node[circle,draw,scale=0.5] (c3) at (5,-1) {$j_4$};
                
                \draw[-] (c) to (c1);
                \draw[-] (c) to (c2);
                \draw[-] (c) to (c3);
            \end{tikzpicture}
        \end{center}
        
        \item \textbf{Folding:} A connected component of $\gf$ \emph{folds} under the map $\Gamma$ if it only contains vertices corresponding to both quadratic twists for every $j$-invariant appearing as a vertex in the component.
        
        \begin{center}
        \begin{tikzpicture}
            \node[draw,circle,scale=0.5] (a) at (0,0) {$j_1$};
            \node[draw,circle,scale=0.5] (b) at (1,0) {$j_1$};

            \draw[->] (2,0) to (3,0);
            \node[] (label) at (2.5,0.5) {$\Gamma$};

            \draw[-] (a) to (b);
            \node[draw,circle,scale=0.5] (anew) at (4,0) {$j_1$};
            \draw[-,loop above] (anew) to (anew);
        \end{tikzpicture}
        \end{center}

        \item \textbf{Attachment at a vertex:} Two components of $\gf$ have a \emph{vertex attachment} under the map $\Gamma$ if they both contain a vertex with the same $j$-invariant but the neighbors of that shared $j$-invariant are not the same in the two components.

        \begin{center}
            \begin{tikzpicture}
            \node[circle,draw,scale=0.5] (E1) at (1/3,3/3) {$j_3$};
            \node[circle,draw,scale=0.5] (E2) at (2/3,1/3) {$j_3$};
            \node[circle,draw,scale=0.5] (E3) at (3.5/3,4/3) {$j_2$};
            \node[circle,draw,scale=0.5] (E4) at (5/3,1/3) {$j_2$};
            \node[circle,draw,scale=0.5] (E5) at (6/3,3/3) {$j_1$};
            
            \node[circle,draw,scale=0.5] (F1) at (1/3+2.5,3/3) {$j_1$};
            \node[circle,draw,scale=0.5] (F2) at (2/3+2.5,1/3) {$j_4$};
            \node[circle,draw,scale=0.5] (F3) at (3.5/3+2.5,4/3) {$j_4$};
            \node[circle,draw,scale=0.5] (F4) at (5/3+2.5,1/3) {$j_5$};
            \node[circle,draw,scale=0.5] (F5) at (6/3+2.5,3/3) {$j_5$};
            
            \draw[-] (E1) to (E2);
            \draw[-] (E3) to (E1);
            \draw[-] (E3) to (E5);
            \draw[-] (E4) to (E5);
            \draw[-] (E4) to (E2);
            
            \draw[-] (F1) to (F2);
            \draw[-] (F3) to (F1);
            \draw[-] (F3) to (F5);
            \draw[-] (F4) to (F5);
            \draw[-] (F4) to (F2);
            
            \draw[->] (5,.75) to (6,.75);
            \node[] (label) at (5.5,1.25) {$\Gamma$};
            
            \node[circle,draw,scale=0.5] (G3) at (1/3+6,3/3) {$j_3$};
            \node[circle,draw,scale=0.5] (G2) at (3.5/3+6,4/3) {$j_2$};
            \node[circle,draw,scale=0.5] (G1) at (6/3+ 6,3/3) {$j_1$};
            \node[circle,draw,scale=0.5] (G5) at (9+2/3,3/3) {$j_5$};
            \node[circle,draw,scale=0.5] (G4) at (6/3+ 6+5/6,4/3) {$j_4$};
            
            \draw[-] (G3) to (G2);
            \draw[-] (G1) to (G2);
            \draw[-] (G1) to (G4);
            \draw[-] (G4) to (G5);
            
            \draw[-,loop above] (G3) to (G3);
            \draw[-,loop above] (G5) to (G5);

            \end{tikzpicture}
            \end{center}

        \item \textbf{Attachment by a new edge:} Two connected components of $\gf$ have an \emph{edge attachment} if a new edge appears under the map $\Theta$ of Definition~\ref{def:maps} which connects these two components.

         \begin{center}
            \begin{tikzpicture}
                \draw[->] (-1.5,-.5) to (-.5,-.5);
                \node[] (label) at (-1,0) {$\Gamma$};
                
                \node[circle,draw,scale=0.5] (a) at (0,0) {$j_1$};
                \node[circle,draw,scale=0.5] (a1) at (-.5,-1) {$j_2$};
                \node[circle,draw,scale=0.5] (a2) at (0,-1) {$j_3$};
                \node[circle,draw,scale=0.5] (a3) at (.5,-1) {$j_4$};
                \node[circle,draw,scale=0.5] (b) at (2,0) {$j_5$};
                \node[circle,draw,scale=0.5] (b1) at (1.5,-1) {$j_6$};
                \node[circle,draw,scale=0.5] (b2) at (2,-1) {$j_7$};
                \node[circle,draw,scale=0.5] (b3) at (2.5,-1) {$j_8$};

                \draw[-] (a) to (a1);
                \draw[-] (a) to (a2);
                \draw[-] (a) to (a3);
                \draw[-] (b) to (b1);
                \draw[-] (b) to (b2);
                \draw[-] (b) to (b3);
                \draw[->] (3,-.5) to (4,-.5);
                \node[] (label) at (3.5,0) {$\Theta$};

                \node[circle,draw,scale=0.5] (c) at (5,0) {$j_1$};
                \node[circle,draw,scale=0.5] (c1) at (4.5,-1) {$j_2$};
                \node[circle,draw,scale=0.5] (c2) at (5.5,-1) {$j_3$};
                \node[circle,draw,scale=0.5] (c3) at (5,-1) {$j_4$};

                \node[circle,draw,scale=0.5] (d) at (7,0) {$j_5$};
                \node[circle,draw,scale=0.5] (d1) at (6.5,-1) {$j_6$};
                \node[circle,draw,scale=0.5] (d2) at (7.5,-1) {$j_7$};
                \node[circle,draw,scale=0.5] (d3) at (7,-1) {$j_8$};
                
                \draw[-] (c) to (c1);
                \draw[-] (c) to (c2);
                \draw[-] (c) to (c3);
                \draw[-] (d) to (d1);
                \draw[-] (d) to (d2);
                \draw[-] (d) to (d3);
                \draw[-] (c2) to [out=30,in=150] (d1);
                \draw[-] (c2) to [out=330,in=210] (d1);
            \end{tikzpicture}
        \end{center}

\end{itemize}
\end{definition}

Definition \ref{def:s-f-ea-va} immediately implies the following. 

\begin{lemma}\label{lem:foldingorstacking}
    Folding and stacking are mutually exclusive. 
\end{lemma}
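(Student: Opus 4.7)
The plan is to prove the lemma by contradiction: I would suppose that some connected component $C$ of $\gf$ both folds and stacks with another component $C' \ne C$, and then derive a counting contradiction from the fact that each supersingular $j$-invariant over $\Fp$ is carried by exactly two $\Fp$-isomorphism classes.

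First, I would translate each hypothesis into a vertex-multiplicity statement. The folding condition on $C$ says that every $j$-invariant appearing as a label in $C$ is realized by both of its quadratic-twist $\Fp$-isomorphism classes, so each such $j$-label is carried by at least two vertices of $C$. The stacking condition says that $C$ and $C'$ are isomorphic as $j$-labeled graphs, so $C'$ carries each such $j$-invariant with the same multiplicity. Since $C$ and $C'$ are disjoint as distinct components of $\gf$, at least four distinct $\Fp$-isomorphism classes must share any $j$-invariant appearing in $C$.

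Next, I would invoke the classification of twists over $\Fp$ to rule this out. For $j \ne 0, 1728$ there are exactly two $\Fp$-isomorphism classes of supersingular curves with that $j$-invariant, namely the two quadratic twists. For the exceptional values $j \in \{0, 1728\}$, one counts isomorphism classes of the Weierstrass forms $y^2 = x^3 + b$ and $y^2 = x^3 + ax$ respectively, giving $\lvert \Fp^*/(\Fp^*)^6 \rvert$ and $\lvert \Fp^*/(\Fp^*)^4 \rvert$; under the supersingularity congruences $p \equiv 2 \pmod{3}$ and $p \equiv 3 \pmod{4}$, both of these indices equal $\gcd(6, p-1) = \gcd(4, p-1) = 2$. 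So in every case there are exactly two $\Fp$-isomorphism classes, not four, producing the desired contradiction. The main obstacle I anticipate is precisely this twist count at $j = 0, 1728$, where the larger geometric automorphism groups $\mu_6$ and $\mu_4$ might naively suggest more classes; the argument survives because the supersingularity congruences on $p$ forbid the extra roots of unity from lying in $\Fp$, collapsing the count back down to two.
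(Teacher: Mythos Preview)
Your proof is correct and rests on the same idea as the paper's: both arguments use that each supersingular $j$-invariant is realized by exactly two $\Fp$-isomorphism classes, so the twists cannot simultaneously lie inside $C$ (folding) and in a distinct component $C'$ (stacking). The paper states this in two sentences without the explicit multiplicity count or the $j=0,1728$ verification, whereas you spell both out; the added care at the exceptional $j$-values is a nice touch but not a genuinely different route.
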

\begin{proof}
    If two components stack, then one component's vertices correspond to the quadratic twists of the other component's vertices. If a component folds, then the quadratic twists of its vertices belong to that same component.
\end{proof}

As shown in \cite[Prop.\ 3.16 and Cor.\ 3.24]{Arpin}, vertex attachment is only possible at the $j$-invariant $1728$ and only for $\ell > 2$. Attachment by a new edge implies a double edge in $\gfb$ by \cite[Cor.\ 3.15]{Arpin}. We recall the precise theorems from \cite{Arpin} describing the changes to $\gf$ under the map $\Omega$:

\begin{theorem}[{\cite[Thm.\ 3.29]{Arpin}}]
    Let $\ell = 2$. Under the map $\Gamma:\mathcal{G}_2(\Fp)\to \mathcal{G}_2(\Fpbar)$ of Definition~\ref{def:gftospfunctions}, only stacking and folding are possible. Under the map $\Theta:\operatorname{Im}(\Gamma)\to\mathcal{G}_2(\Fpbar)$, at most one attachment by a new edge is possible. In particular, attachment by a vertex is not possible.
\end{theorem}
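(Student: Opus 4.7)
I would split the argument into the two assertions. For the claim about $\Gamma$, Definition~\ref{def:s-f-ea-va} enumerates exactly three kinds of structural change that $\Gamma$ can induce (stacking, folding, and vertex attachment), and the remarks just before Lemma~\ref{lem:foldingorstacking} already invoke \cite[Prop.~3.16 and Cor.~3.24]{Arpin} to state that vertex attachment can occur only at $j = 1728$ and only when $\ell > 2$. With $\ell = 2$, vertex attachment is immediately excluded, so only stacking and folding remain.

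For the claim about $\Theta$, the key input is \cite[Cor.~3.15]{Arpin}: every new edge produced by $\Theta$ must appear as one half of a double edge in $\gfb$. Each edge attachment therefore corresponds to a pair of $\Fp$-vertices $j, j'$ in different components of $\operatorname{Im}(\Gamma)$ that are joined by a double edge in $\gfb$. For $\ell = 2$, such a double edge at $j$ exists iff the full $\Fpbar$-endomorphism ring of $E_j$ contains an element $\alpha \ne \pm 2$ with $N(\alpha) = 4$; writing $\alpha^2 - t\alpha + 4 = 0$ forces $|t| \le 3$, so $\ZZ[\alpha]$ has discriminant in $\{-7, -12, -15, -16\}$, exactly as in the proof of Proposition~\ref{prop:multi-edges}. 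The candidate endpoints are therefore the supersingular reductions modulo $p$ of the roots of the Hilbert class polynomials $H_{-7}, H_{-12}, H_{-15}, H_{-16}$, of respective degrees $1, 1, 2, 1$. By Corollaries~\ref{cor:loops23}--\ref{cor:multi23}, $\mathcal{G}_2(\Fp)$ itself has no pre-existing multi-edges for $p \ge 5$ outside the special case $p = 7$, so every such $\gfb$-double edge genuinely contributes new edges under $\Theta$.

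The remainder of the argument is a discriminant-by-discriminant case analysis. For each $\Delta \in \{-7,-12,-15,-16\}$, one identifies the congruence on $p$ forcing the roots of $H_\Delta$ to reduce to supersingular $\Fp$-rational $j$-invariants (automatic for the three linear polynomials $H_{-7}, H_{-12}, H_{-16}$, and additionally requiring $\left(\tfrac{5}{p}\right) = 1$ for $H_{-15}$), then uses the Frobenius action on $E_j[2]$ to determine how the resulting $\gfb$-double edge decomposes between $\Fp$-rational isogenies already lying in $\mathcal{G}_2(\Fp)$ and genuinely new ones, and finally applies Theorem~\ref{thm:DG} to check whether the two $\Fp$-endpoints were already in the same component of $\operatorname{Im}(\Gamma)$ (in which case only an intra-component new edge, not an attachment, is created).

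The main obstacle is showing that two distinct discriminants cannot simultaneously produce cross-component edge attachments for a single prime~$p$. Establishing $\le 1$ rather than $\le 4$ requires verifying that the arithmetic conditions on $p$ under which one $\Delta$ contributes a genuine cross-component new edge are incompatible with the analogous conditions for a second value, once the $\Fp$-rationality of both endpoints and the component structure dictated by Theorem~\ref{thm:DG} are taken into account. The tight trace bound $|t| \le 3$ specific to $\ell = 2$ is precisely what enables this mutual exclusivity; for $\ell \ge 3$ the analogous bound is looser, and the next section of the paper indeed exhibits regimes where multiple attachments arise.
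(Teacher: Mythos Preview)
The paper does not supply its own proof of this statement; it is quoted verbatim from \cite[Thm.~3.29]{Arpin}. That said, the surrounding material in Section~\ref{sec:ell2structurethms} (Lemma~\ref{lem:doubleedge2}, Proposition~\ref{prop:EApnot7mod8}, Proposition~\ref{prop:TripleEdges}) effectively re-derives the ``at most one edge attachment'' conclusion for $p \ge 17$, and it does so by factoring $\mathrm{Res}_2(X)$ directly over~$\ZZ$ rather than via an endomorphism-and-discriminant analysis.

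Your treatment of $\Gamma$ is fine. Your treatment of $\Theta$ contains a genuine gap at the sentence ``The candidate endpoints are therefore the supersingular reductions modulo~$p$ of the roots of the Hilbert class polynomials $H_{-7}, H_{-12}, H_{-15}, H_{-16}$.'' The existence of $\alpha \in \mathrm{End}_{\Fpbar}(E_j)$ with $\ZZ[\alpha]$ of discriminant~$\Delta$ does \emph{not} imply that $j$ is a root of~$H_\Delta$: Deuring lifting only yields that $j$ is the reduction of a root of $H_{\Delta'}$ for some order $\mathcal{O}_{\Delta'} \supseteq \ZZ[\alpha]$. For $\Delta = -12$ the relevant double-edge vertex is $j = 0$ (root of~$H_{-3}$), not $j = 54000$ (root of~$H_{-12}$); for $\Delta = -16$ it is $j = 1728$ (root of~$H_{-4}$), not $j = 287496$ (root of~$H_{-16}$). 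Indeed, from the explicit factorization~\eqref{eq:Res01} one checks that $54000$ and $287496$ are not roots of $\mathrm{Res}_2(X)$ over~$\ZZ$ and hence carry no double edge in~$\gfb$, whereas $0$ and $1728$ \emph{are} roots and are missing from your list. Your proposed case analysis is therefore run on the wrong set of candidate vertices, and the mutual-exclusivity argument you describe in the final paragraph cannot be completed as stated. The route taken in \cite{Arpin} and echoed in this paper --- working directly with the factorization of $\mathrm{Res}_2(X)$ into Hilbert class polynomials, which correctly produces $H_{-3}$, $H_{-4}$, $H_{-7}$, $H_{-15}$ --- sidesteps this issue entirely.
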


\begin{theorem}[{\cite[Thm.\ 3.18]{Arpin}}] \label{thm:3.18A+}
   Let $p$ and $\ell>2$ be distinct primes such that the order of a prime ideal $\mathfrak{l}$ above $\ell$ in the class group of $\mathbb{Q}(\sqrt{-p})$ is odd. Under the map $\Gamma:\mathcal{G}_\ell(\Fp)\to\mathcal{G}_\ell(\Fpbar)$:
   \begin{itemize}
       \item the two components containing vertices corresponding to $j = 1728$ fold and attach at the vertex $j = 1728$;
       \item all other components stack.
   \end{itemize}
   Under the map $\Theta:\operatorname{Im}(\Gamma)\to \mathcal{G}_\ell(\Fpbar)$, the number of new edges is bounded by the degree of    Res$_\ell(X) \pmod{p}$, with Res$_\ell(X)$ given in \eqref{eq:Res}.
\end{theorem}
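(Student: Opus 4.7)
The plan is to first analyze the stacking/folding behavior under $\Gamma$ using the class-group description of cycles, and then separately bound the new edges under $\Theta$ via the characterization of double edges by $\text{Res}_\ell$. The claim concerning $j = 1728$ implicitly forces $p \equiv 3 \pmod 4$ (otherwise $j = 1728$ is not supersingular, rendering the first bullet vacuous), so I work under this assumption.

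By Theorem \ref{thm:DG}(4), each component of $\gf$ is a cycle corresponding to a coset of $\langle [\mathfrak{l}] \rangle$ in either $G_s = \text{Cl}(\ZZ[\omega])$ (for surface cycles) or $G_f = \text{Cl}(\ZZ[\sqrt{-p}])$ (for floor cycles). The central input is the identification of quadratic twisting on $\Fp$-iso classes with inversion in the class group: if the vertex is labeled by class $c$, its quadratic twist is labeled by $c^{-1}$. This reflects the fact that complex conjugation on ideals sends $\mathfrak{a}$ to $\bar{\mathfrak{a}}$, whose class is $[\mathfrak{a}]^{-1}$, and matches the twist action on the corresponding curves. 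Granting this, the cycle through $c$ folds iff $c^2 \in \langle[\mathfrak{l}]\rangle$, and otherwise it stacks with the cycle through $c^{-1}$ (matching vertices up to traversal direction). By Proposition \ref{prop:classno} both class groups have odd order, and $|\langle[\mathfrak{l}]\rangle|$ is odd by hypothesis, so the quotient $G/\langle[\mathfrak{l}]\rangle$ has trivial $2$-torsion and $c^2 \in \langle[\mathfrak{l}]\rangle$ forces $c \in \langle[\mathfrak{l}]\rangle$. Thus exactly one cycle on each level folds, namely the identity coset, and all others pair up to stack.

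To pin down the identity-coset cycle as the one containing $j = 1728$: the identity class corresponds to a vertex $E_1$ equal to its own twist, hence admitting an extra $\Fp$-rational automorphism, forcing $j(E_1) \in \{0, 1728\}$. A direct calculation using the parameterization $y^2 = x^3 + ax$ under the twist rule $a \mapsto ad^2$, together with the fact that $(\Fp^*)^4 = (\Fp^*)^2$ when $p \equiv 3 \pmod 4$, shows that every curve with $j = 1728$ is its own quadratic twist (the two iso classes at $j = 1728$ being related instead by a quartic twist). In contrast, when $j = 0$ is supersingular ($p \equiv 2 \pmod 3$), the twist rule $b \mapsto bd^3$ on $y^2 = x^3 + b$ swaps the two iso classes there, so neither is a quadratic self-twist. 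Therefore $j(E_1) = 1728$, and since $\gfb$ has a single vertex at $j = 1728$, the two folding cycles meet exactly there, yielding the asserted attachment at the vertex.

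For the bound on new edges under $\Theta$: by \cite[Cor.\ 3.15]{Arpin}, every new edge forces a double edge in $\gfb$, whose endpoints are then roots of $\text{Res}_\ell(X) \pmod p$. A new edge and its Frobenius conjugate together realize the double edge, and multiplicities of $\Fp$-rational roots of $\text{Res}_\ell$ control the number of double edges that can be supported; a careful accounting yields $(\#\text{ new edges}) \le \deg_X \text{Res}_\ell(X) \pmod p$. The most delicate step throughout is the twist--inversion correspondence, particularly its justification at the special $j$-invariants $0$ and $1728$ where extra automorphisms complicate the link between twisting and the class-group action, and which ultimately force the folding locus to land at $j = 1728$ rather than $j = 0$.
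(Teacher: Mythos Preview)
This theorem is not proved in the paper; it is quoted from \cite[Thm.~3.18]{Arpin} without argument, so there is no in-paper proof to compare against. Your outline is the natural class-group argument and is almost certainly the same route taken in \cite{Arpin}.

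On the substance, your reasoning for $p\equiv 3\pmod 4$ is sound. The step you yourself flag as delicate---that quadratic twisting corresponds to inversion on the class-group torsor---can be packaged more cleanly without choosing a base point: twisting is an involution on each level that respects $\ell$-isogenies, hence permutes the cycles; each cycle has odd length (both class groups have odd order by Proposition~\ref{prop:classno}, so every subgroup and every element order is odd), so a twist-stable cycle must contain a twist-fixed vertex. Your explicit computation then pins such a vertex to $j=1728$ and rules out $j=0$, and since there is exactly one curve with $j=1728$ on each level, exactly one cycle per level folds while the rest pair off and stack. This sidesteps the need to globally identify twisting with $c\mapsto c^{-1}$.

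One genuine omission: you dispose of $p\equiv 1\pmod 4$ by calling the first bullet vacuous and then work exclusively under $p\equiv 3\pmod 4$. But the theorem still asserts that \emph{all} components stack in that case, which needs its own sentence: here $j=1728$ is not supersingular, your analysis of $j=0$ still shows no curve is its own quadratic twist, so the twist involution is fixed-point-free; combined with the odd-order hypothesis on $[\mathfrak l]$ (now a genuine restriction, since $h(-4p)$ is even), no cycle can be twist-stable and everything stacks. The new-edge bound via $\mathrm{Res}_\ell$ and \cite[Cor.~3.15]{Arpin} is fine as sketched.
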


In the next two sections, building on the results of \cite[Sec.\ 3]{Arpin}, we  explicitly describe the structure of $\mathcal{S}_2^p$ and and $\mathcal{S}_3^p$ in terms of specific congruence conditions on~$p$. We also provide a road map for extending this approach in principle to any $\ell$ and outline obstacles one might encounter. Small primes are treated separately elsewhere: a detailed description of the graphs $\gf$, $\slp$ and $\gfb$ for $2 \le p \le 13$ can be found under the file name \href{https://github.com/TahaHedayat/LUCANT-2025-Supersingular-Ell-Isogeny-Spine/blob/main/SmallCharacteristicGraphDescription.pdf}{\texttt{Small\-Characteristic\-Graph\-Description.pdf}}. This information can also be generated with the notebook \href{https://github.com/TahaHedayat/LUCANT-2025-Supersingular-Ell-Isogeny-Spine/blob/main/Small_Prime_Information.ipynb}{\texttt{Small\_Prime\_Information.ipynb}}, and the notebook \href{https://github.com/TahaHedayat/LUCANT-2025-Supersingular-Ell-Isogeny-Spine/blob/main/Graph_Viz.ipynb}{\texttt{Graph\_Viz.ipynp}} generates images of all three graphs. All these sources are available at \cite{LUCANTGitHub}.

\section{Structure of the spine $\mathcal{S}_2^p$} \label{sec:ell2structurethms} 

In this section, we provide congruence conditions that govern the structure of~$\mathcal{S}_2^p$, the spine for $\ell = 2$. This case carries the most interest, especially when $p \equiv 3 \pmod{4}$, due to the volcano structure of $\mathcal{G}_2(\Fp)$. For much of this section, we only consider primes $p \ge 17$; for details on the primes $2 \le p \le 13$, consult the sources cited at the end of Secton~\ref{sec:spine}.

As expected, our investigation makes extensive use of the modular poynomial $\Phi_2(X,X)$ and the polynomial Res$_2(X)$ as defined in \eqref{eq:Res}. These polynomials are given as follows:
\begin{align}
    \Phi_2(X,X) &= -(X - 1728)(X - 8000)(X + 3375)^2, \label{eq:Phi2} \\
    \mbox{Res}_2(X) &= -2^2 X^2 (X - 1728) (X + 3375)^2 (X^2 + 191025X - 121287375)^2. \label{eq:Res01}
\end{align}

By Corollary \ref{cor:loops23}, $\mathcal{G}_2(\Fp)$ contains loops only for $p = 7$.  We recall a well-known result about loops in $\mathcal{G}_2(\Fpbar)$.

\begin{lemma}[Loops in $\mathcal{G}_2(\Fpbar)$]\label{lem:loops2}
Let $p \ne 2, 7$. Loops occur in $\mathcal{G}_2(\Fpbar)$ at vertices corresponding to precisely the following $j$-invariants, all belonging to $\Fp$:
\begin{align*}
    1728 &\text{ if }p\equiv 3\pmod{4}\\
    8000 & \text{ if }p\equiv 5,7\pmod{8}\\
    -3375 &\text{ if }p\equiv 3,5,6\pmod{7}.
\end{align*}
\end{lemma}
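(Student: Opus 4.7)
The starting point is the modular-polynomial characterization of edges recalled in Section~\ref{sec:sig}: a loop at vertex $j$ in $\mathcal{G}_2(\Fpbar)$ corresponds to $j$ being a root of $\Phi_2(j, Y) \pmod{p}$ evaluated at $Y = j$, i.e.\ to $\Phi_2(j, j) \equiv 0 \pmod{p}$. Invoking the closed-form identity \eqref{eq:Phi2}, the only possible loop vertices are therefore $j \in \{1728,\, 8000,\, -3375\}$, all of which are integers and hence lie in $\Fp$.

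The second step is to determine which of these three $j$-values actually corresponds to a supersingular elliptic curve modulo $p$. Each is a CM $j$-invariant of class number one, arising as the unique root of the Hilbert class polynomial $H_{-4}(X) = X - 1728$, $H_{-8}(X) = X - 8000$, and $H_{-7}(X) = X + 3375$, respectively. By the third bullet on page preceding Theorem~\ref{thm:DG} in Section~\ref{sec:sig}, such a root is supersingular modulo $p$ if and only if $p$ is inert in the corresponding imaginary quadratic field $\mathbb{Q}(i)$, $\mathbb{Q}(\sqrt{-2})$, or $\mathbb{Q}(\sqrt{-7})$.

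The final step is a routine quadratic character computation. Inertness in $\mathbb{Q}(i)$ is equivalent to $\left(\tfrac{-1}{p}\right) = -1$, i.e.\ $p \equiv 3 \pmod 4$; inertness in $\mathbb{Q}(\sqrt{-2})$ to $\left(\tfrac{-2}{p}\right) = -1$, i.e.\ $p \equiv 5, 7 \pmod 8$; and inertness in $\mathbb{Q}(\sqrt{-7})$ to $\left(\tfrac{-7}{p}\right) = -1$, i.e.\ $p \equiv 3, 5, 6 \pmod 7$. The excluded primes $p = 2, 7$ are precisely those ramifying in at least one of these three fields; in particular, $p = 7$ is the unique prime where the three candidate $j$-invariants all coincide modulo $p$ (they are each congruent to $6$ mod $7$), which is why that prime is handled separately in the small-prime appendix cited at the end of Section~\ref{sec:spine}.

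No step here is genuinely difficult; the only subtle point worth flagging is that the factor $(X + 3375)^2$ in \eqref{eq:Phi2} has multiplicity two, so whenever the congruence on $p$ is met, the loop at $j = -3375$ is in fact a double loop in $\mathcal{G}_2(\Fpbar)$. This multiplicity matters for subsequent edge-counting arguments but does not alter the set of loop vertices asserted by the lemma.
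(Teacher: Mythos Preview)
Your proof is correct and follows essentially the same approach as the paper: identify loop vertices as roots of $\Phi_2(X,X)$ via \eqref{eq:Phi2}, recognize the linear factors as the Hilbert class polynomials $H_{-4}$, $H_{-8}$, $H_{-7}$, and invoke the inertness criterion for supersingularity. You simply spell out the Legendre-symbol computations and add commentary on the excluded primes and the double-loop multiplicity at $j=-3375$, which the paper omits.
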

\begin{proof}
    The loops in  $\mathcal{G}_2(\Fpbar)$] are precisely the roots of the polynomial $\Phi_2(X,X)$ of \eqref{eq:Phi2}. Its linear factors are the Hilbert class polynomials $H_{-4}(X)$, $H_{-8}(X)$ and $H_{-7}(X)$. The congruence conditions on $p$ characterize when $p$ is inert in the corresponding imaginary quadratic fields. 
\end{proof}

    The $j$-invariants listed in Lemma~\ref{lem:loops2} need not be distinct for primes $p\leq 5$; these small primes are handled explicitly in the document \href{https://github.com/TahaHedayat/LUCANT-2025-Supersingular-Ell-Isogeny-Spine/blob/main/SmallCharacteristicGraphDescription.pdf}{\texttt{Small\-Characteristic\-Graph\-Description.pdf}} at \cite{LUCANTGitHub}.
    
\begin{proposition}[Folding for $\ell = 2$]\label{prop:folding}
The only connected components of $\mathcal{G}_2(\Fp)$ that fold are those containing vertices with $j = 8000$ (if $p\equiv 5,7\pmod{8}$) and/or those containing vertices with $j = 1728$ (if $p\equiv 3\pmod{4}$).
\end{proposition}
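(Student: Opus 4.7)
The plan is to interpret folding of a component $C \subseteq \mathcal{G}_2(\Fp)$ as the condition that $C$ is set-wise fixed by the quadratic twist involution $\tau$, viewed as a graph automorphism of $\mathcal{G}_2(\Fp)$. I would then argue case-by-case using Theorem~\ref{thm:DG}.

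For $p \equiv 3 \pmod 4$, Proposition~\ref{prop:classno} gives that $h(-p)$ is odd, and the surface of each component contains an odd number of vertices: one for each tripod when $p \equiv 3 \pmod 8$, or a divisor of $h(-p)$ when $p \equiv 7 \pmod 8$. The restriction of $\tau$ to this surface must then fix at least one vertex, corresponding to a supersingular elliptic curve $E$ with $E \cong_{\Fp} E^t$. But this requires extra $\Fp$-automorphisms, present only at $j = 1728$ (needing $p \equiv 1 \pmod 4$) or $j = 0$ (needing $p \equiv 1 \pmod 3$), neither compatible with $p \equiv 3 \pmod 4$, $p > 3$. Hence no component folds in this case, and the $j = 1728$ clause of the proposition is vacuous beyond the small prime $p = 3$. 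For $p \equiv 1 \pmod 4$, components are pairs of vertices; the same automorphism argument rules out self-twisted vertices, so a folding pair must be $\{v, \tau(v)\}$ with $v \ne \tau(v)$. Its edge corresponds to an $\Fp$-rational 2-isogeny $\phi: E \to E^t$, which under $\Gamma$ lifts to a loop at $j(E)$ in $\mathcal{G}_2(\Fpbar)$, so by Lemma~\ref{lem:loops2} we have $j(E) \in \{1728, 8000, -3375\}$. Since $j = 1728$ is not supersingular when $p \equiv 1 \pmod 4$, it remains to exclude $j(E) = -3375$ (unless $-3375 \equiv 8000 \pmod p$).

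The heart of the argument is a quaternion algebra computation excluding $j = -3375$. Let $O = \mathrm{End}(E)$, a maximal order in $B_{p, \infty}$, and choose orthogonal embeddings of $\mathbb{Q}(\sqrt{-7})$ and $\mathbb{Q}(\sqrt{-p})$, so that $\sqrt{-7}\, \pi_p = -\pi_p \sqrt{-7}$ for $\pi_p = \sqrt{-p}$. A direct computation with $\alpha = (1+\sqrt{-7})/2$ then yields $[\alpha, \pi_p] = \sqrt{-7}\, \pi_p$, an element of reduced norm $7p$. Since $7p/4 \notin \ZZ$ for odd $p$, this commutator cannot lie in $2O$, i.e., $\alpha$ and $\pi_p$ fail to commute modulo $2O$. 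The two loop-kernels at $j = -3375$ in $\mathcal{G}_2(\Fpbar)$ are precisely the $E[2]$-kernels of $\alpha$ and its conjugate. Using that $\sqrt{-7}\,\pi_p$ reduces to an invertible involution on $E[2]$, one checks that for any nonzero $v \in \ker(\alpha \bmod 2)$, the image $\pi_p v$ leaves this kernel; the same holds for the conjugate kernel. Hence the unique Frobenius-stable order-2 subgroup of $E[2]$ is the third one, whose 2-isogeny quotient has $j$-invariant distinct from $-3375$, and the pair containing $v$ cannot fold.

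The main obstacle is this quaternion algebra computation: verifying that the orthogonal embedding is available, computing the commutator's reduced norm correctly, and translating the non-commutativity modulo $2O$ into a non-stability statement for the loop-kernels on $E[2]$. By contrast, the combinatorial case analysis via Theorem~\ref{thm:DG} (ruling out folding entirely for $p \equiv 3 \pmod 4$ via parity of surface counts) follows directly once the twist involution is identified with the graph automorphism $\tau$.
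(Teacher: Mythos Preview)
Your argument for $p\equiv 3\pmod 4$ contains a fatal error and in fact reaches a conclusion that contradicts the proposition. You assert that $E\cong_{\Fp}E^t$ forces extra $\Fp$-automorphisms, and since the order-$4$ automorphism at $j=1728$ is only $\Fp$-rational when $p\equiv 1\pmod 4$, you conclude that no vertex can be $\tau$-fixed when $p\equiv 3\pmod 4$. This is false: triviality of the quadratic-twist cocycle is controlled by $\mathrm{Aut}_{\Fpbar}(E)$, not $\mathrm{Aut}_{\Fp}(E)$. Concretely, for $E_a:y^2=x^3+ax$ the quadratic twist by a nonsquare $d$ is $E_{d^2a}$; when $p\equiv 3\pmod 4$ every square in $\Fp^\times$ is already a fourth power, so $E_{d^2a}\cong_{\Fp}E_a$. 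Thus \emph{both} $\Fp$-classes with $j=1728$ are fixed by $\tau$, and the tripod (resp.\ volcano) through them genuinely folds --- precisely what the proposition asserts and what the paper exploits in Theorems~\ref{thm:spinestructure_ell2_p3mod8} and~\ref{thm:spinestructure_ell2_p7mod8}. Your parity step (an odd surface count forces a $\tau$-fixed surface vertex) is fine; it is the subsequent exclusion of $j=1728$ that fails, and with it your claim that ``no component folds in this case'' and that the $j=1728$ clause is ``vacuous.''

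By contrast, your treatment of $p\equiv 1\pmod 4$ --- reducing a folding pair to a loop via Lemma~\ref{lem:loops2} and then ruling out $j=-3375$ through the commutator $[\alpha,\pi_p]=\sqrt{-7}\,\pi_p$ of reduced norm $7p$ --- does go through (noting $\sqrt{-7}\equiv 1$ on $E[2]$, the identity $\alpha\pi_p\equiv\pi_p\bar\alpha$ on $E[2]$ forces $\pi_p$ to swap $\ker\alpha$ and $\ker\bar\alpha$, so the $\Fp$-rational $2$-isogeny lands at $j\neq -3375$). The paper takes a much shorter route for the whole proposition: it cites \cite[Cor.~3.28]{Arpin} for the ``only'' direction and then exhibits explicit $\ZZ$-models with an integral $2$-isogeny between twists to verify that the $j=8000$ and $j=1728$ components do fold.
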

\begin{proof}
    A component folds if and only if it contains only vertices corresponding to both $\Fp$-twists of a supersingular elliptic curve $j$-invariant; see \cite[Cor. 3.28]{Arpin}. The congruence class for $j = 8000$ has been updated to specify the supersingular primes for this $j$-invariant. In \cite{Arpin}, the authors mistakenly declare that $j = 8000$ is only supersingular for $p\equiv 5\pmod{8}$, when in fact $j = 8000$ is supersingular for $p\equiv 5,7\pmod{8}$. In fact, the two models that the authors list for the curve over $\mathbb{Z}$:
    \[E_{8000}:y^2 = x^3 - 4320x +96768, \quad E_{8000}^t:y^2 = x^3 - 17280x - 774144\]
    are twists by $\sqrt{-2}$ (not $\sqrt{2}$, as stated by the authors). Since $-2$ is not a square modulo $p$ for $p\equiv 5,7\pmod{8}$, $E$ and $E'$ reduce to supersingular quadratic twists over $\Fp$ for such primes $p$. Moreover, as noted in \cite{Arpin}, these curves have a $\ZZ$-rational $2$-isogeny between them, which means they belong to the same connected component of $\mathcal{G}_2(\Fp)$. 
    
    Likewise, $\ZZ$-models for $j = 1728$ show that both $\Fp$-isomorphism classes lie on the same connected component of $\mathcal{G}_2(\Fp)$. 
\end{proof}

Since edge attachment forces a double edge, we need to identify double edges in~$\mathcal{G}_2(\Fpbar)$ that join two vertices in $\Fp$. This amounts to ascertaining when the roots of the polynomial Res$_2(X)$ in \eqref{eq:Res01} are supersingular and belong to $\Fp$. For $j=0$, this requires $p \equiv 2 \pmod{3}$, and for the roots 1728 and $-3375$ of Res$_2(X)$, this was addressed in Lemma \ref{lem:loops2}. So we need only consider the quadratic factor of Res$_2(X)$, which is in fact the Hilbert class polynomial $H_{-15}(X)$. 

\begin{lemma}[Multi-edges in $\mathcal{G}_2(\Fpbar)$] \label{lem:doubleedge2}
    Let $p \ge 7$. In addition to the loops identified in Lemma \ref{lem:loops2}, $\mathcal{G}_2(\Fpbar)$ has multi-edges when $p = 7, 13$ or $p \equiv 11, 14 \pmod{15}$. 
\end{lemma}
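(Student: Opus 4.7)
The plan is to identify all supersingular $\Fp$-pairs $(j,j')$ that can be joined by a non-loop multi-edge in $\mathcal{G}_2(\Fpbar)$, using the criterion that both $j$ and $j'$ must be roots of $\mbox{Res}_2(X) \pmod{p}$, and then to filter out those contributions that already appear as loops in Lemma~\ref{lem:loops2}. Reading off the factorization in \eqref{eq:Res01}, the candidate roots are $0$, $1728$, $-3375$, and the two roots of the Hilbert class polynomial $H_{-15}(X) = X^2 + 191025X - 121287375$ for the maximal order $\mathcal{O}_{-15} \subset \mathbb{Q}(\sqrt{-15})$. I would first dispose of the factors $(X - 1728)$ and $(X + 3375)^2$: these correspond to the (one and two, respectively) loops at $1728$ and $-3375$ arising from the norm-$2$ elements of $\mathcal{O}_{-4}$ and $\mathcal{O}_{-7}$, all of which are captured by Lemma~\ref{lem:loops2} and produce no additional non-loop multi-edges.

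For the remaining factor $X^2$ in $\mbox{Res}_2$, I would show that the repeated-root multiplicity of $\Phi_2(0,Y)$ at $Y = 54000$ is entirely explained by the extra automorphisms of $E_0$: the group $\mathrm{Aut}(E_0) \cong \mathbb{Z}/6\mathbb{Z}$ acts transitively on the three order-$2$ subgroups of $E_0$, collapsing the apparent triple edge to $j = 54000$ into a single edge of $\mathcal{G}_2(\Fpbar)$ under the equivalence defining the graph. A parallel check at $j = 1728$ (where $\mathrm{Aut}$ of order~$4$ fixes the kernel of $[1+i]$ and swaps the other two order-$2$ subgroups) accounts for the double-root structure of $\Phi_2(1728,Y)$ at $Y = 287496$ and confirms that only one loop plus one simple edge emerges from $j = 1728$. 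This isolates $H_{-15}(X)$ as the sole remaining source of non-loop multi-edges.

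Since the class group of $\mathcal{O}_{-15}$ has order $2$ and is generated by a prime $\mathfrak{p}$ above $2$ (as $2$ splits in $\mathcal{O}_{-15}$, with neither factor principal since no element of $\mathcal{O}_{-15}$ has norm $2$), the two roots of $H_{-15}$ are connected by a double edge coming from $\mathfrak{p}$ and $\overline{\mathfrak{p}}$, and this persists in $\mathcal{G}_2(\Fpbar)$ whenever both roots reduce to distinct supersingular $\Fp$-vertices. A direct computation yields $\mathrm{disc}(H_{-15}) = 5 \cdot 85995^2$, so $H_{-15}$ splits over $\Fp$ iff $\left(\frac{5}{p}\right) = 1$, i.e., $p \equiv \pm 1 \pmod{5}$; and the roots are supersingular iff $p$ is inert in $\mathbb{Q}(\sqrt{-15})$, i.e., $\left(\frac{-15}{p}\right) = -1$. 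Using $\left(\frac{-15}{p}\right) = \left(\frac{-3}{p}\right)\left(\frac{5}{p}\right)$, these two conditions together force $\left(\frac{-3}{p}\right) = -1$, i.e., $p \equiv 2 \pmod{3}$, and by the Chinese Remainder Theorem the combined condition is equivalent to $p \equiv 11, 14 \pmod{15}$.

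Finally, since $p \mid \mathrm{disc}(H_{-15})$ iff $p \in \{3, 5, 7, 13\}$, the discriminant can vanish modulo $p \ge 7$ only at $p = 7$ and $p = 13$, where $H_{-15}(X) \pmod{p}$ acquires a double root. I would verify directly that this double root coincides with the reduction of at least one of $1728, -3375, 8000$ at the unique supersingular $j$-invariant modulo~$p$ (using $1728 \equiv -3375 \equiv 8000 \equiv 6 \pmod{7}$ and $-3375 \equiv 8000 \equiv 5 \pmod{13}$), so that the collapsed double edge manifests as an extra multi-loop at that vertex, beyond what Lemma~\ref{lem:loops2} provides. The main obstacle will be the automorphism argument dispensing with $X = 0$ and $X = 1728$: one must carefully justify that the equivalence defining $\mathcal{G}_2(\Fpbar)$ genuinely identifies order-$2$ subgroups lying in the same $\mathrm{Aut}(E_0)$- or $\mathrm{Aut}(E_{1728})$-orbit, so that the repeated-root contributions of $\Phi_2(0,Y)$ and $\Phi_2(1728,Y)$ do not yield any multi-edges beyond the single loop at $1728$.
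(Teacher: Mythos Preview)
Your treatment of the $H_{-15}$ factor is correct and essentially identical to the paper's proof: both compute that the roots lie in $\Fp$ iff $p \in \{7,13\}$ or $\left(\frac{5}{p}\right)=1$, impose supersingularity via $\left(\frac{-15}{p}\right)=-1$, reduce to $\left(\frac{-3}{p}\right)=-1$, and combine to obtain $p \equiv 11,14 \pmod{15}$. The paper's proof does nothing more than this, as the text immediately preceding the lemma has already set aside the roots $0$, $1728$, $-3375$ of $\mathrm{Res}_2$.

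Your attempt to go further and dispose of the factors $X^2$ and $(X-1728)$ contains a genuine error. You argue that $\mathrm{Aut}(E_0)\cong\ZZ/6\ZZ$ acts transitively on the three order-$2$ subgroups and hence collapses the apparent triple edge $0\to 54000$ to a single edge, and similarly that $\mathrm{Aut}(E_{1728})$ reduces the double edge $1728\to 287496$ to a simple edge. This reasoning is backwards: the edge equivalence in $\mathcal{G}_2(\Fpbar)$ is by \emph{post}-composition with an automorphism of the \emph{target}, not by pre-composition with automorphisms of the source. Since $54000$ and $287496$ are both distinct from $0$ and $1728$, their automorphism groups are $\{\pm 1\}$, which cannot identify isogenies with distinct kernels. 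Equivalently, the paper defines the multiplicity of the edge $j\to j'$ to be the multiplicity of $j'$ as a root of $\Phi_2(j,Y)$, and one has $\Phi_2(0,Y)=(Y-54000)^3$ and $\Phi_2(1728,Y)=(Y-1728)(Y-287496)^2$. So the triple edge $0\to 54000$ (stated explicitly in the paper's Proposition~\ref{prop:TripleEdges}) and the double edge $1728\to 287496$ are genuine non-loop multi-edges, occurring whenever $p\equiv 2\pmod 3$ and $p\equiv 3\pmod 4$ respectively---conditions not implied by $p\equiv 11,14\pmod{15}$. Your $H_{-15}$ argument still suffices to establish the lemma as an existence statement, but the complete classification you set out to prove is false as written.
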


\begin{proof}
    By \eqref{eq:Res01}, additional double edges correspond to the roots modulo $p$ of $H_{-15}(X) = X^2 + 191025X - 121287375$, which are  $(-191025 \pm 85995 \sqrt{5})/2$. For $p$ odd,  their reductions modulo $p$ belong to $\Fp$ if and only if $p \mid  85995 = 3^3 \cdot 5 \cdot 7^2 \cdot 13$ or 5 is a quadratic residue modulo $p$. So assuming $p \ge 7$, $H_{-15} \pmod{p}$ has roots in $\Fp$ if and only if $p = 7,13$ or $(\frac{5}{p}) = 1$. The latter condition holds if and only if $p \equiv \pm 1 \pmod{5}$.

    The roots of $H_{-15}(X) \pmod{p}$ are $j$-invariants of supersingular elliptic curves if and only if $(\frac{-15}{p}) = -1$. Note that this holds for $p = 7$ and $p =  13$, in which case $H_{-15}(X)$ has the double root $-191025/2 \in \Fp$; else it has two distinct roots in $\Fp$. Assuming $(\frac{5}{p}) = 1$, the condition $(\frac{-15}{p}) = -1$ reduces to $(\frac{-3}{p}) = -1$, or equivalently, $p \equiv 2 \pmod{3}$. Finally, $p \equiv \pm 1 \pmod{5}$ and $p \equiv 2 \pmod{3}$ if and only if $p \equiv 11 \mbox{ or } 14 \pmod{15}$. 
\end{proof}

\begin{proposition}[New edges and edge attachment for $\ell = 2$] \label{prop:EApnot7mod8}
    Suppose $p \ge 17$.
    \begin{enumerate}
        \item  If $p\not\equiv 7\pmod{8}$, then new edges appear at the supersingular $j$-invariants which are roots of $H_{-15}(X)$. The new edge joining the two distinct roots of $H_{-15}(X)$ is attaching. Thus, edge attachment happens when $p\equiv 11$, 29, 41, 59, 89, 101 $\pmod{120}$. 

        \item If $p\equiv 7\pmod{8}$, then attachment by an edge can only happen between vertices distinct from  $-3375$, 1728 and 0 whose $j$-invariants are roots of~$H_{-15}(X)$.
    \end{enumerate}
\end{proposition}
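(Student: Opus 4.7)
The strategy is to exploit two reductions. By \cite[Cor.~3.15]{Arpin}, every new edge appearing under $\Theta$ that attaches two distinct components is a double edge in $\mathcal{G}_2(\Fpbar)$. By Lemma~\ref{lem:doubleedge2} together with the factorization~\eqref{eq:Res01}, any such double edge between two $\Fp$-vertices has endpoints drawn from the finite list $\{0, 1728, -3375\}$ together with the roots of $H_{-15}(X)$, subject to the appropriate supersingularity congruences on $p$. The proof reduces to enumerating candidate pairs from this list and, for each, determining (a) whether they are joined by a double edge in $\mathcal{G}_2(\Fpbar)$, (b) whether at least one of the two parallel edges is new (absent from $\gf$), and (c) whether the endpoints lie in distinct components of $\operatorname{Im}(\Gamma)$ so that the new edge actually attaches.

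For part~(1), the decisive pair is the two roots of $H_{-15}(X)$, which are supersingular and both in $\Fp$ precisely when $p \equiv 11, 14 \pmod{15}$. The factor $H_{-15}(X)^2$ of $\text{Res}_2(X)$ directly produces the double edge in $\mathcal{G}_2(\Fpbar)$ between these two roots, which is the source of the ``new edges at $H_{-15}$-roots'' asserted in the proposition. Proposition~\ref{prop:multi-edges} (no non-trivial directed multi-edges in $\gf$ for $p \geq 5$) guarantees that at least one of the two parallel edges is new under $\Theta$. By Proposition~\ref{prop:folding}, the components of $\gf$ containing the two roots cannot fold, as they contain neither $j=1728$ nor $j=8000$. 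To conclude that the added edge actually attaches, one must verify that in fact \emph{neither} of the two parallel edges is $\Fp$-rational, so that the two roots lie in genuinely distinct components of $\operatorname{Im}(\Gamma)$ before $\Theta$ is applied. Combining $p \equiv 1, 3, 5 \pmod 8$ with $p \equiv 11, 14 \pmod{15}$ via the Chinese Remainder Theorem then yields the listed residues $p \equiv 11, 29, 41, 59, 89, 101 \pmod{120}$.

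For part~(2), when $p \equiv 7 \pmod 8$, the same reduction applies, but I must additionally rule out edge-attachments with at least one endpoint in $\{0, 1728, -3375\}$. Here Proposition~\ref{prop:folding} says that the components of $j=1728$ and $j=8000$ both fold, and Theorem~\ref{thm:DG}(3) furnishes the global volcano structure of $\gf$. For each candidate cross-pair with one endpoint $j \in \{0, 1728, -3375\}$ and the other endpoint $j'$ distinct from $j$, the plan is to show that either $\Phi_2(j, j') \not\equiv 0 \pmod p$, so that no 2-isogeny between $j$ and $j'$ exists in $\mathcal{G}_2(\Fpbar)$ at all, or else the two endpoints already lie in the same component of $\operatorname{Im}(\Gamma)$ after the folding and stacking of $\Gamma$, so that no attachment occurs.

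The main obstacle in both parts is to track $\Fp$-rationality of the $\bar{\mathbb{F}}_p$-isogenies encoded in $\text{Res}_2(X)$. For the $H_{-15}$-pair in part~(1) this amounts to showing that neither generator of the order-$2$ ideal class in $\ZZ[(1+\sqrt{-15})/2]$ corresponds to an $\Fp$-rational $2$-isogeny between the relevant curves, while for part~(2) it amounts to checking that every double edge incident to $0$, $1728$, or $-3375$ is either absent in $\mathcal{G}_2(\Fpbar)$ or already contained in $\operatorname{Im}(\Gamma)$. Both refinements appear to depend on relating the Frobenius action on the $2$-torsion to the residue of $p$ modulo $8$, by comparing the $\Fp$-endomorphism rings $\ZZ[\sqrt{-p}]$ and $\ZZ[(1+\sqrt{-p})/2]$ with the order $\ZZ[(1+\sqrt{-15})/2]$ sitting inside the quaternion endomorphism ring of the curves in question.
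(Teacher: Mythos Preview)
Your proposal is a reasonable outline of a first-principles approach, but it leaves the decisive step unfinished. You correctly reduce part~(1) to the pair of $H_{-15}$-roots and correctly observe that the absence of non-loop multi-edges in $\mathcal{G}_2(\Fp)$ (Proposition~\ref{prop:multi-edges}) forces at least one of the two parallel edges to be new; you also correctly identify that for the new edge to be \emph{attaching} you need the two $H_{-15}$-roots to lie in distinct components of $\operatorname{Im}(\Gamma)$, i.e.\ that \emph{neither} edge is $\Fp$-rational. But you then declare this ``the main obstacle'' and merely sketch an approach via Frobenius on $2$-torsion and comparison of the orders $\ZZ[\sqrt{-p}]$, $\ZZ[(1+\sqrt{-p})/2]$, $\ZZ[(1+\sqrt{-15})/2]$ inside the quaternion endomorphism ring, without carrying it out. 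Since this is the entire content of the attachment claim, the proposal as written is a strategy, not a proof. The same applies to part~(2): the case analysis you describe for $j \in \{0,1728,-3375\}$ is plausible but not executed.

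The paper takes a much shorter route. For part~(1) it invokes \cite[Cor.~3.30]{Arpin}, which already establishes the attachment claim for all $p > 101$, and then disposes of the finitely many remaining primes $p \in \{29, 41, 59, 89, 101\}$ directly: for $p \le 59$ every supersingular $j$-invariant lies in $\Fp$ and $\mathcal{G}_2(\Fpbar)$ is connected, which forces the attachment; for $p = 89, 101$ one simply computes $\mathcal{S}_2^p$. Part~(2) is handled by a direct citation to \cite[Prop.~3.25]{Arpin}. Your endomorphism-ring/Frobenius approach, if completed, would yield a uniform self-contained argument avoiding both the citation and the case check, but it is considerably more work than what is actually required here.
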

\begin{proof}
    This is an extension of \cite[Cor.\ 3.30]{Arpin}, where the authors proved the result under the assumption that $p > 101$. 
    
    New edges correspond to distinct roots of $H_{-15}(x)$ that are supersingular $j$-invariants in $\Fp$. Since $p \ne 7, 13$, the roots of $H_{-15}(x)$ are distinct modulo $p$ and are supersingular for $p \equiv 11$ or $14 \pmod{15}$ by Lemma~\ref{lem:doubleedge2}. Combining these congruence conditions with $p\not\equiv 7\pmod{8}$ results in the set of congruence classes listed in part~(1). In this case, if $p > 101$, then the new edge is an attaching edge by~\cite[Cor.\ 3.30]{Arpin}. It thus suffices to verify this for $p = 29, 41, 59, 71, 89,$ and $101$. For $p = 29, 41$ and 59, edge attachment holds by the connectivity of $\mathcal{G}_2(\Fpbar)$ and the fact that all the supersingular $j$-invariants belong to $\Fp$. For $p = 89$ and~$101$, direct computation of $\mathcal{S}_2^p$ confirms the edge attachment.

    The case $p \equiv 7 \pmod{8}$ is covered in~\cite[Prop.\ 3.25]{Arpin}.
\end{proof}

 For $p \equiv 11, 14 \pmod{15}$ with $p \equiv 7 \pmod{8}$, or equivalently, $p\equiv 71$ or $119 \pmod{120}$, the new edges of Proposition \ref{prop:EApnot7mod8} may or may not be attaching.

Triple edges in $\gfb$ also play a role in ascertaining edge attachment. The following theorem characterizes the occurrence of triple edges in $\mathcal{G}_2(\Fpbar)$.
 
\begin{proposition}[Triple edges in $\mathcal{G}_2(\Fpbar)$] \label{prop:TripleEdges}
For brevity, let $\mathcal{P}_2 = \{ 2,3,5,7,13 \}$. 
\begin{enumerate}
\item If $p\in \mathcal{P}_2$, then $\mathcal{G}_2(\Fpbar)$ is identical to the spine $\slp[2]$ and consists of a single vertex with a triple loop. 
\item If $p \not\in \mathcal{P}_2$, and $p\equiv 2\pmod{3}$, then there is a triple edge from the vertex corresponding to $j=0$ to the vertex corresponding to $j=54000$. 
\item For all other primes $p$, $\mathcal{G}_2(\Fpbar)$ does not contain triple edges. 
\end{enumerate}
\end{proposition}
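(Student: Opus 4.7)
My plan is to write $\Phi_2(X,Y) = Y^3 + A(X) Y^2 + B(X) Y + C(X)$ for explicit $A,B,C \in \mathbb{Z}[X]$. By the description of edges in Section~\ref{sec:sig}, a triple edge $j \to j'$ in $\mathcal{G}_2(\Fpbar)$ is equivalent to the identity $\Phi_2(j,Y) \equiv (Y - j')^3 \pmod{p}$; comparing coefficients (with $j' \equiv -A(j)/3$) reduces this to the pair of simultaneous polynomial conditions
\begin{equation}\label{eq:tripleconds-plan}
    A(j)^2 \equiv 3 B(j) \pmod{p} \quad \text{and} \quad A(j)^3 \equiv 27 C(j) \pmod{p}.
\end{equation}
The entire proof amounts to determining which primes $p$ and supersingular $j$ satisfy both conditions in~\eqref{eq:tripleconds-plan}.

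\textbf{Parts (1) and (2).} The key observation is the identity
\[
    \Phi_2(0,Y) = (Y - 54000)^3 \qquad \text{in } \mathbb{Z}[Y],
\]
verified by direct expansion of the classical modular polynomial. This identity persists under reduction modulo any prime. Since $j = 0$ is supersingular exactly when $p \equiv 2 \pmod{3}$, it produces a triple edge $0 \to 54000$ for all such~$p$; this is a genuine triple edge (not a triple loop) precisely when $54000 \not\equiv 0 \pmod{p}$, i.e., $p \notin \{2,3,5\}$, which settles part~(2). For $p \in \mathcal{P}_2$, the Eichler--Deuring mass formula produces a unique supersingular $j$-invariant, which lies in $\Fp$ by direct inspection, so $\slp[2] = \mathcal{G}_2(\Fpbar)$. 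Since the graph is $3$-regular as a multigraph, this single vertex must support three loops: for $p \in \{2,3,5\}$ this follows from the displayed identity reduced modulo~$p$, while for $p \in \{7,13\}$ it is a one-line check that $\Phi_2(j,Y) \equiv (Y - j)^3 \pmod{p}$ at the unique supersingular~$j$.

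\textbf{Part (3) and the main obstacle.} For the remaining claim, I would suppose $p \notin \mathcal{P}_2$ and that a triple edge $j \to j'$ exists with $j \not\equiv 0 \pmod{p}$, and derive a contradiction. A short calculation in $\mathbb{Z}[X]$ yields the factorisation
\[
    A(X)^2 - 3 B(X) \;=\; X \cdot Q(X), \qquad Q(X) = X^3 - 2976 X^2 + 2533680 X - 604432125,
\]
so the first condition in~\eqref{eq:tripleconds-plan} forces $Q(j) \equiv 0 \pmod{p}$, and together with the second condition this means $j$ is a common root modulo $p$ of $Q(X)$ and $A(X)^3 - 27 C(X)$, so that $p$ divides the integer resultant $N := \operatorname{Res}_X\!\bigl(Q(X),\, A(X)^3 - 27 C(X)\bigr)$. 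The final step is to compute $N$ and verify that $N \ne 0$ and every prime divisor of $N$ lies in $\mathcal{P}_2$, contradicting $p \notin \mathcal{P}_2$. This resultant computation is the main technical hurdle: $A(X)^3 - 27 C(X)$ is a degree-$6$ polynomial with coefficients of order $10^{15}$, which makes by-hand evaluation impractical, but any computer algebra system (see the SageMath code at \cite{LUCANTGitHub}) handles both the resultant and its prime factorisation essentially instantly, after which the argument is immediate.
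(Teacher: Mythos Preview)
Your strategy is essentially the same as the paper's: both arguments reduce the existence of a triple edge at $j$ to a pair of polynomial conditions on $j$ (you use $A^2-3B$ and $A^3-27C$; the paper uses $\mathrm{Res}_2$ and $\mathrm{Res}_2^{(2)}$), then study their common roots modulo~$p$. Indeed your $A(X)^2-3B(X) = X\,Q(X)$ has $Q(X) = (X-405)(X^2-2571X+1492425)$, which is exactly (up to the factor $-12X$) the paper's $\mathrm{Res}_2^{(2)}(X)$.

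There is, however, a genuine gap in your final step. The displayed identity $\Phi_2(0,Y) = (Y-54000)^3$ that you use for parts (1)--(2) also shows that $X \mid A(X)^3 - 27\,C(X)$ in $\mathbb{Z}[X]$. Consequently $j=0$ is a common root of $Q$ and $A^3-27C$ modulo every prime dividing $Q(0) = -604432125 = -3^8\cdot 5^3\cdot 11\cdot 67$. In particular $11 \mid N$ and $67 \mid N$, so your expectation that ``every prime divisor of $N$ lies in $\mathcal{P}_2$'' is false, and the argument is not immediate after the CAS step. The fix is twofold: first factor out $X$ from $A^3-27C$ as well before taking the resultant (this already eliminates the spurious factor $67$); second, even after this correction, the prime $11$ persists (as the paper's own computation confirms), so you must still handle $p=11$ by hand and check that the only common root modulo $11$ is $j=0$.

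The paper sidesteps this pitfall by not taking one large resultant at the end. Instead it factors both polynomials explicitly over~$\mathbb{Z}$ and checks the potential collisions one pair of factors at a time, evaluating e.g.\ $f(1728) = 3^6\cdot 7^2$ and $f(-3375) = 3^6\cdot 5^2\cdot 7\cdot 13^2$, and then treating the residual prime $11$ separately. Your single-resultant plan is cleaner in principle, but you must account for the $j=0$ contribution and the leftover exceptional primes before declaring victory.
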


\begin{proof}
   A proof for the primes $p\in \mathcal{P}_2$ can be found in the document \href{https://github.com/TahaHedayat/LUCANT-2025-Supersingular-Ell-Isogeny-Spine/blob/main/SmallCharacteristicGraphDescription.pdf}{\texttt{Small\-Characteristic\-Graph\-Description.pdf}} at \cite{LUCANTGitHub}, so suppose $p \notin \mathcal{P}_2$.  
   The primes $p$ for which $\slp[2]$ contains a triple edge are those for which the polynomials Res$_2(X)$ of \eqref{eq:Res01} and the polynomial
   \begin{align*}
       \text{Res}_2^{(2)}(X) &=  \mbox{Res} \left ( \Phi_\ell(X,Y), \frac{\partial^2}{\partial Y^2} \Phi_\ell(X,Y); \ Y \right ) \\ &= -2^2 \cdot 3 X(X - 405) (X^2 - 2571 X + 1492425)
   \end{align*}
   share a common root in $\Fp$. The respective sets of roots of these two polynomials are
    \[ \left \{0,1728,-3375,\frac{-191025 \pm 85995\sqrt{5}}{2} \right \} , \quad  \left \{0,405,\frac{2571\pm 39\sqrt{421}}{2} \right \}.\]
    The $j$-invariant $0$ is always a common root of both polynomials, so whenever $j=0$ is supersingular (i.e.\ whenever $p\equiv 2\pmod{3}$), the graph $\slp[2]$ has a triple edge. Solving $\Phi_2(0,Y) = 0$ for $Y$ over $\ZZ$, we see that this triple edge is to the vertex $j = 54000$ and is hence not a loop (as $54000 \not\equiv 0 \pmod{p}$ for $p \ge 7$). We show that Res$_2(X)$ and $\text{Res}_2^{(2)}(X)$ have no other shared roots when $p \not\in \mathcal{P}_2$, thus ruling out any other triple edges in~$\mathcal{S}_2^p$. 

    For brevity, let $f(X) = X^2 - 2571X + 1492425$ denote the quadratic factor of $\text{Res}_2^{(2)}(X)$. We first observe that the root $j= 1728$ of Res$_2(X)$ is not a root of $\text{Res}_2^{(2)}(X)$. For $p \not\in \mathcal{P}_2$, we note that $1728 \not \equiv 0, 405 \pmod{p}$ and $f(1728) = 3^6 \cdot 7^2 \not\equiv 0 \pmod{p}$. Similarly, the root $j = -3375$ of Res$_2(X)$ is not also a root of $\text{Res}_2^{(2)}(X)$, as $-3375 \not\equiv 0$ or 405 modulo $p$ and $f(-3375) = 3^6 \cdot 5^2 \cdot 7 \cdot 13^2 \not \equiv 0 \pmod{p}$ when  $p \not\in \mathcal{P}_2$.
    
    Finally, we establish that $H_{-15}(X)$ and $f(X)$ have no shared root. Equating the roots of these two polynomials modulo $p$ yields the following sequence of implications:
    \begin{align*}
        \frac{-191025 \pm 85995\sqrt{5}}{2} &\equiv \frac{2571 \pm 39 \sqrt{421}}{2}\pmod{p},\\
                (85995)^2 \cdot 5 &\equiv \left(193596 \pm 39 \sqrt{421}\right)^2\pmod{p},\\
                       -504351432 &\equiv \pm 15100488\sqrt{421}\pmod{p},\\
                   (-504351432)^2 &\equiv (15100488)^2 \cdot 421\pmod{p},\\
               158371952330592000 &\equiv 0\pmod{p}.
    \end{align*}
    The only prime $p\not\in \mathcal{P}_2$ that divides 158371952330592000 is $p = 11$. The constant coefficients of $H_{-15}(X)$ and $f(X)$ are both multiples of~11, and it is now easy to verify that the only common root of these two polynomials modulo 11 is 0.
\end{proof}

With these ingredients, we are ready to explicitly describe the graph structure of~$\mathcal{S}_2^p$. As in Theorem \ref{thm:DG},
we consider three cases according to the structure of $\mathcal{G}_2(\Fp)$, covered in Theorems~\ref{thm:spinestructure_ell2_p1mod4}, \ref{thm:spinestructure_ell2_p3mod8}, and \ref{thm:spinestructure_ell2_p7mod8}, respectively. As before, we refer to the document \href{https://github.com/TahaHedayat/LUCANT-2025-Supersingular-Ell-Isogeny-Spine/blob/main/SmallCharacteristicGraphDescription.pdf}{\texttt{Small\-Characteristic\-Graph\-Description.pdf}} at \cite{LUCANTGitHub} for the primes~$p$ with $2 \le p \le 13$. 

\begin{theorem}[Spine structure, $p\equiv 1\pmod{4}$ and $\ell = 2$]\label{thm:spinestructure_ell2_p1mod4}
    Let $p \ge 17$ with $p\equiv 1\pmod{4}$.
    When mapping $\mathcal{G}_2(\Fp)$ into the spine $\mathcal{S}_2^p$, there may be stacking, folding, or attachment by new edges. The following congruence classes of $p$ determine precisely which of these occur and how often.
    \begin{enumerate}
            \item $p =29$: $\mathcal{G}_{2}(\mathbb{F}_{29})$ has two components: the component containing the two vertices with $j=8000$ folds, and there is an edge attachment connecting this folded component to the other component. The spine $\slp[2]$ is the entire 2-isogeny graph $\mathcal{G}_2(\Fpbar)$.
            \item $p \equiv 29, 101 \pmod{120}$, $p \neq 29$: the component containing the two vertices with $j=8000$ folds, all other components stack, and there is an edge attachment connecting two stacked components. The spine $\slp[2]$ consists of one isolated vertex corresponding to $j=8000$ with a loop, one component with four vertices, and the remaining $h(-4p)/2 - 5$ vertices are joined in pairs. 
            \item $p \equiv 41, 89 \pmod{120}$: all components stack, and there is an edge attachment. The spine $\slp[2]$ consists of one connected component with four vertices, and the remaining $h(-4p)/2 - 4$ vertices are joined in pairs.
            \item $p \equiv 13, 37, 53, 61, 77, 109 \pmod{120}$: the component containing the two vertices with $j=8000$ folds, all other components stack, and there are no edge attachments. The spine $\slp[2]$ consists of one isolated vertex corresponding to $j = 8000$ with a loop, and the rest of the $h(-4p)/2 -1$ vertices connect in pairs.
            \item $p \equiv 1, 17, 49, 73, 97, 113 \pmod{120}$: all components stack and there are no edge attachments. The spine $\slp[2]$ consists of $h(-4p)/2$ vertices joined in pairs.
        \end{enumerate}
The five cases are summarized in Table~\ref{tab:p1mod4}.

\newcommand{\minitab}[2][r]{\begin{tabular}{#1}#2\end{tabular}}
\begin{table}[h]
\centering
\begin{tabular}{l|ll|c|}
\hhline{~|---|}
& \multicolumn{2}{c|}{\cellcolor[HTML]{C0C0C0}Edge attachment}                    & \multicolumn{1}{c|}{\cellcolor[HTML]{C0C0C0}No edge attachment} \\ \hline

\multicolumn{1}{|l|}{\cellcolor[HTML]{C0C0C0}} & 
\multicolumn{2}{c|}{\multirow{2}{*}{$p\equiv 41,89\pmod{120}$}}  & 
    \multicolumn{1}{r|}{$p\equiv 1,17,49,73,97,$} \\
\multicolumn{1}{|l|}{\multirow{-2}{*}{\cellcolor[HTML]{C0C0C0}No fold}} & & & \multicolumn{1}{r|}{$113 \pmod{120}$} \\ \hline

\multicolumn{1}{|l|}{\cellcolor[HTML]{C0C0C0}}                                      & \multicolumn{1}{l|}{\cellcolor[HTML]{F0F0F0} w/ folded comp.} & \cellcolor[HTML]{F0F0F0} not w/ folded comp. &  \multicolumn{1}{r|}{\multirow{3}{*}{\minitab{$p\equiv 13,37,53,61,77,$ \\ $109\pmod{120}$}}} \\ \cline{2-3}
\multicolumn{1}{|l|}{\cellcolor[HTML]{C0C0C0}} & \multicolumn{1}{c|}{\multirow{2}{*}{$p = 29$}} & $p\equiv 29,101\pmod{120}$,  & \\ 
\multicolumn{1}{|l}{\multirow{-3}{*}{\cellcolor[HTML]{C0C0C0}One fold}} & \multicolumn{1}{|c|}{} & \multicolumn{1}{c|}{$p\neq 29$} & \multicolumn{1}{r|}{} \\ \hline
\end{tabular}
        \caption{Spine structure for $p\equiv1\pmod{4}$}
        \label{tab:p1mod4}
\end{table}
\end{theorem}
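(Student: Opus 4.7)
The plan is to combine Theorem~\ref{thm:DG}(1), Proposition~\ref{prop:folding}, and Proposition~\ref{prop:EApnot7mod8} through a case split modulo $120$. By Theorem~\ref{thm:DG}(1), $\mathcal{G}_2(\Fp)$ consists of $h(-4p)$ floor vertices arranged into $h(-4p)/2$ disjoint adjacent pairs. By Lemma~\ref{lem:foldingorstacking} together with the fact (cited just before this section from \cite[Thm.\ 3.29]{Arpin}) that vertex attachment is impossible for $\ell=2$, each such pair either folds to a single vertex with a loop or stacks with exactly one other pair (the pair of $\Fp$-twists) to contribute one adjacent pair to $\mathcal{S}_2^p$. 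In both cases the total vertex count in the spine is $h(-4p)/2$.

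First I would dispatch the folding analysis. Since $p\equiv 1\pmod 4$ forces $j=1728$ to be non-supersingular, Proposition~\ref{prop:folding} reduces folding to the unique component containing $j=8000$, which is supersingular precisely when $p\equiv 5\pmod 8$. Hence exactly one pair folds for $p\equiv 5\pmod 8$ and no pair folds for $p\equiv 1\pmod 8$.

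Next I would dispatch the edge attachment analysis. Proposition~\ref{prop:EApnot7mod8}(1) identifies the attaching primes for $p\not\equiv 7\pmod 8$ as $p\equiv 11,29,41,59,89,101\pmod{120}$; intersecting with $p\equiv 1\pmod 4$ leaves the four classes $29,41,89,101\pmod{120}$. In these classes the attaching edge joins the two distinct supersingular roots of $H_{-15}(X)\pmod p$. To decide whether this edge can hit the folded component, I would check whether $j=8000$ is a root of $H_{-15}(X)$ modulo $p$. A direct computation gives
\[ H_{-15}(8000) \;=\; 5^{3}\cdot 7^{4}\cdot 13^{2}\cdot 29, \]
so among primes $p\geq 17$ in the four attaching classes, only $p=29$ has $H_{-15}(8000)\equiv 0\pmod p$. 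Hence for every other such prime the attaching edge joins two stacked pairs into a $4$-vertex component that is disjoint from the (possibly present) folded isolated vertex.

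Cross-tabulating the two binary conditions (fold vs.\ no fold, attachment vs.\ no attachment) against the residue classes modulo $120$ then yields exactly the five cases of the theorem, and the structural descriptions follow by bookkeeping: the spine has $h(-4p)/2$ vertices; a folded pair subtracts $1$ from the pool of paired vertices and contributes an isolated looped vertex; an edge attachment fuses $2$ stacked pairs into one $4$-vertex component. For the special case $p=29$ I would handle the arithmetic directly: $h(-4\cdot 29)/2=3$ equals the total number of supersingular $j$-invariants mod $29$, so every supersingular $j$-invariant lies in $\Fp$ and hence $\mathcal{S}_2^{29}=\mathcal{G}_2(\Fpbar)$; moreover $8000\equiv 25\pmod{29}$ is one of the roots of $H_{-15}(X)\pmod{29}$ that labels the folded vertex, which explains why the attaching edge lands on the folded component in this single case. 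The main technical obstacle is the factorization of $H_{-15}(8000)$ above, which cleanly isolates $p=29$ as the unique exception; once that is in hand, the remainder of the argument is a mechanical combination of the cited structural results.
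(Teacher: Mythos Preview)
Your proposal is correct and follows essentially the same route as the paper: identify folding via Proposition~\ref{prop:folding} (reducing to $j=8000$ since $p\equiv 1\pmod 4$ kills $j=1728$), identify edge attachment via Proposition~\ref{prop:EApnot7mod8}(1), and isolate $p=29$ by checking when $8000$ is a root of $H_{-15}(X)$ modulo~$p$. Your explicit factorization $H_{-15}(8000)=5^{3}\cdot 7^{4}\cdot 13^{2}\cdot 29$ is exactly the computation underlying the paper's bare assertion that ``$8000$ is a root of $H_{-15}(X)$ only for $p=29$''; the paper additionally inserts a triple-edge argument (via Proposition~\ref{prop:TripleEdges}) to justify that every new edge is attaching, but you are right that this is already contained in the statement of Proposition~\ref{prop:EApnot7mod8}(1).
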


\begin{proof}
    The case $p = 29$ can be verified by directly computing $\mathcal{S}^{29}_2$. 

    We consider the cases for stacking and folding first, followed by those of edge attachment. The individual congruence conditions can be combined by the Chinese Remainder Theorem to provide the statements in this theorem.

    \noindent\textit{Folding and stacking:} By Proposition~\ref{prop:folding}, the only connected components which could possibly fold are those containing $j = 8000$ and $j = 1728$. Since $p \equiv 1\pmod{4}$, $j = 1728$ is not a supersingular $j$-invariant. The $j$-invariant 8000 is supersingular over $\Fp$ whenever $p\equiv 5,7\pmod{8}$. Combining this with our assumption that $p\equiv 1\pmod{4}$, the connected component with vertices having $j$-invariant 8000 will fold whenever $p\equiv5\pmod{8}$. The rest of the components necessarily stack.

    \noindent\textit{Edge attachment:} Every edge that appears in $\mathcal{G}_2(\Fpbar)$ but does not already belong to $\mathcal{G}_2(\Fp)$ results in a double edge by \cite[Lem.\ 3.14]{Arpin}. If a new edge is not attaching, this would result in a triple edge in the case of $p\equiv 1\pmod{4}$ due to the structure of $\mathcal{G}_2(\Fp)$. Since there are no triple edges for $p>13$ by Proposition~\ref{prop:TripleEdges}, every new edge must produce an edge attachment.
    
    By Proposition~\ref{prop:EApnot7mod8}, attachment by a new edge happens when $p\equiv 11,29,41,59$, $89,101\pmod{120}$. Combining this with our assumption that $p\equiv 1\pmod{4}$ yields the congruence classes $p\equiv 29,41\pmod{60}$.

    To ascertain whether or not this edge attaches to the folding component (corresponding to $j = 8000$), we observe that $j = 8000$ is supersingular only when $p\equiv 5\pmod{8}$ (and $p \equiv 1 \pmod{4})$, and 8000 is a root of $H_{15}(X)$ only for $p = 29$ (under the condition $p>13$). This explains the second column of Table~\ref{tab:p1mod4}. The third column is obtained by sorting the remaining congruence classes modulo 120 by their congruence class modulo $8$.
\end{proof}

\begin{theorem}[Spine structure, $p\equiv 3\pmod{8}$ and $\ell = 2$]\label{thm:spinestructure_ell2_p3mod8}
    Let $p \ge 17$ with $p\equiv 3\pmod{8}$.
   When mapping $\mathcal{G}_2(\Fp)$ into the spine $\slp[2]$, there may be stacking, folding, or attachment by new edges.  The connected component of $\mathcal{G}_2(\Fp)$ containing the two vertices with $j = 1728$ always folds. The following congruence classes of $p$ determine precisely which of these occur and how often.

\begin{enumerate}
            \item $p = 59$: the folded component gets edge attached to another component by an edge between two vertices on the floor.
            \item $p \equiv 11, 59 \pmod{120}$ and $p \neq 11, 59$: an edge attachment takes place between two stacked components with the attaching edge being incident to two vertices on the floor.
            \item $p \equiv 19, 43, 67, 83, 91, 107 \pmod{120}$: no edge attachment takes place.
        \end{enumerate}
The three cases are summarized in Table~\ref{tab:p3mod8}.
\begin{table}[h]
    \centering
    \begin{tabular}{|
>{\columncolor[HTML]{C0C0C0}}l |c|}
\hline
No edge attachment         & $p \equiv 19, 43, 67, 83, 91, 107 \pmod{120}$    \\ \hline
EA w/ folded comp.     & $p = 59$                                         \\ \hline
EA not w/ folded comp. & $p \equiv 11, 59 \pmod{120}$ and $p \neq 11, 59$ \\ \hline
\end{tabular}
    \caption{Spine structure for $p\equiv 3\pmod{8}$}
    \label{tab:p3mod8}
\end{table}
\end{theorem}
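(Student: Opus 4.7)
The plan is to combine the folding analysis of Proposition~\ref{prop:folding}, the edge-attachment criterion of Proposition~\ref{prop:EApnot7mod8}, and an explicit evaluation of $H_{-15}$ at the two $j$-invariants appearing in the folded tripod. Since $p\equiv 3\pmod 8$ forces $p\equiv 3\pmod 4$, $j=1728$ is supersingular and Proposition~\ref{prop:folding} forces its tripod to fold; the same hypothesis precludes $j=8000$ from being supersingular, so no other component folds, and by Lemma~\ref{lem:foldingorstacking} every other component stacks. Because $p\not\equiv 7\pmod 8$, Proposition~\ref{prop:EApnot7mod8}(1) says a new attaching edge appears precisely between the two distinct supersingular roots of $H_{-15}(X)\pmod p$, and by Lemma~\ref{lem:doubleedge2} (with $p\ge 17$) these roots exist in $\Fp$ and are supersingular exactly when $p\equiv 11,14\pmod{15}$. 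Intersecting with $p\equiv 3\pmod 8$ via the Chinese Remainder Theorem isolates $p\equiv 11,59\pmod{120}$ as the only classes admitting an attaching edge; the remaining valid residues mod $120$ (those $\equiv 3\pmod 8$ and coprime to $30$) are $\{19,43,67,83,91,107\}$, giving item~(3) with no attachment.

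The attaching edge must have both endpoints on the floor. By Theorem~\ref{thm:DG}(2), every surface vertex already has degree~$3$ inside its tripod in $\mathcal{G}_2(\Fp)$; since $\mathcal{G}_2(\Fpbar)$ is $3$-regular, this accounts for all of its $2$-isogenies, so $\Theta$ cannot attach a new edge to a surface $\Fp$-isomorphism class. Consequently both endpoints of the attaching edge correspond to floor $\Fp$-isomorphism classes.

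To separate $p=59$ from the other cases, I identify the two $j$-invariants in the folded tripod. Its surface curve may be taken to be $E: y^2=x^3-x$, which is on the surface because all of $E[2]$ is $\Fp$-rational and hence $(1+\pi)/2\in \operatorname{End}_{\Fp}(E)$. A V\'elu computation through the three kernels $\langle(0,0)\rangle, \langle(1,0)\rangle, \langle(-1,0)\rangle$ (or equivalently, reducing mod~$p$ the classical $2$-isogeny class of $j=1728$ over $\mathbb{Q}$) shows that the three floor neighbors of $E$ are the other twist of $j=1728$ and two Galois-conjugate twists of common $j$-invariant $287496=66^3$. Hence the folded tripod carries exactly the $j$-invariants $\{1728,287496\}\pmod p$, so a root of $H_{-15}(X)\pmod p$ lies in that tripod iff $p$ divides $H_{-15}(1728)$ or $H_{-15}(287496)$. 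Direct factorization gives
\[ H_{-15}(1728)=3^6\cdot 7^4\cdot 11^2, \qquad H_{-15}(287496)=3^6\cdot 7^4\cdot 11^3\cdot 59, \]
so for $p\ge 17$ the first is never divisible by $p$ and the second is divisible by $p$ only when $p=59$. This yields item~(1) (the attaching edge touches the folded tripod exactly when $p=59$) and item~(2) (for all other $p\equiv 11,59\pmod{120}$ with $p\ge 17$ the attaching edge connects two stacked tripods). The main obstacle is pinning down $287496$ as the non-$1728$ $j$-invariant of the folded tripod; once that is in hand, the two resultant evaluations are routine factorizations.
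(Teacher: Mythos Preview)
Your argument is correct and follows essentially the same route as the paper: use Proposition~\ref{prop:folding} to see that only the $j=1728$ tripod folds (since $j=8000$ is not supersingular for $p\equiv 3\pmod 8$), invoke Proposition~\ref{prop:EApnot7mod8} and Chinese remainder to isolate $p\equiv 11,59\pmod{120}$, identify the folded tripod as carrying exactly $j\in\{1728,287496\}$, and then test whether either value is a root of $H_{-15}$. Your treatment is in fact a bit more explicit than the paper's: the paper simply asserts that $1728$ is not a root of $H_{-15}$ for $p>13$ and that $287496$ is a root only at $p=59$, handling $p=59$ by direct computation of $\mathcal{S}_2^{59}$, whereas you supply the integer factorizations $H_{-15}(1728)=3^6\cdot 7^4\cdot 11^2$ and $H_{-15}(287496)=3^6\cdot 7^4\cdot 11^3\cdot 59$ and read off $p=59$ from them. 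You also add a degree-count argument for why the attaching edge must land on floor vertices, a point the paper states in the theorem but does not justify in its proof.
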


\begin{proof}
    The case $p = 59$ can be again be observed directly by computing $\mathcal{S}_2^{59}$. 

    As before, we consider stacking and folding first, followed by edge attachment. Chinese remaindering again produces the specified congruence classes for $p$. 

    \noindent\textit{Folding and stacking:} By Proposition~\ref{prop:folding}, the only connected components which could possibly fold are those containing $j = 8000$ or $j = 1728$. Since $p\equiv 3\pmod{8}$, $j = 8000$ is not a supersingular $j$-invariant, but $j = 1728$ is supersingular, and folding happens for the component containing the two vertices with $j = 1728$.

    \noindent\textit{Edge attachment:} By Proposition~\ref{prop:EApnot7mod8}, attachment by a new edge happens when $p\equiv 11,29,41,59$, $89,101\pmod{120}$. Combining this with $p\equiv 3\pmod{8}$ yields $p\equiv 11,59\pmod{120}$. 

    To ascertain when the new edge attaches to the folded component, note that this component consists precisely of curves with $j = 1728$ or $j = 287496$. This can be seen by taking a model for $j = 1728$ over $\mathbb{Z}$ and computing the possible $2$-isogenies. For $p > 13$, the $j$-invariant $j=1728$ is not a root of $H_{-15}(X)$, and  $j=287496$ is a root of $H_{-15}(X)$ precisely when $p = 59$. In the remaining cases, the edge attachment occurs between stacking components. This produces Table~\ref{tab:p3mod8}.
\end{proof}

\begin{theorem}[Spine structure, $p\equiv 7\pmod{8}$ and $\ell = 2$]\label{thm:spinestructure_ell2_p7mod8}
    Let $p \ge 17$ with $p\equiv 7\pmod{8}$.
    When mapping $\mathcal{G}_2(\Fp)$ into the spine $\slp[2]$, there may be stacking, folding, or attachment by new edges. Only the unique connected component of $\mathcal{G}_2(\Fp)$ containing $j=1728$ and $j = 8000$ folds. The following congruence classes of $p$ determine the presence of new edges.
\begin{enumerate}
        \item $p \equiv 71, 119 \pmod{120}$: there is a new double-edge in $\slp[2]$ which may or may not be an attachment.
        \item $p \equiv 7, 23, 31, 47, 79, 103 \pmod{120}$: edge attachment does not occur.
\end{enumerate}
\end{theorem}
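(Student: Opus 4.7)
The plan is to mirror the strategy of Theorems \ref{thm:spinestructure_ell2_p1mod4} and \ref{thm:spinestructure_ell2_p3mod8}: first handle folding and stacking, then analyze new edges via the roots of $H_{-15}(X)$, and finally combine congruence conditions using the Chinese Remainder Theorem. For folding, Proposition \ref{prop:folding} limits the folded components of $\mathcal{G}_2(\mathbb{F}_p)$ to those containing vertices with $j = 1728$ or $j = 8000$. Under $p \equiv 7 \pmod 8$ we have $p \equiv 3 \pmod 4$ and $p$ in the supersingular range $\{5,7\} \pmod 8$ of Lemma \ref{lem:loops2}, so both of these $j$-invariants are supersingular. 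The $\mathbb{Z}$-model argument in the proof of Proposition \ref{prop:folding} already places each pair of $\mathbb{F}_p$-twists of $j = 1728$, and each pair of twists of $j = 8000$, inside a single connected component of $\mathcal{G}_2(\mathbb{F}_p)$; all remaining components therefore stack.

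The crucial additional assertion is that these two folded components coincide in a single volcano. This is the main obstacle I anticipate: by Theorem \ref{thm:DG}(3), the surface of $\mathcal{G}_2(\mathbb{F}_p)$ decomposes into $h(-p)/c$ disjoint cycles (where $c$ is the order of a prime above $2$ in the class group of $\mathbb{Z}[(1+\sqrt{-p})/2]$), and when $h(-p)$ is large one must rule out the possibility that $j = 1728$ and $j = 8000$ land on different volcanoes. My plan is to exhibit a $\mathbb{Z}$-rational 2-isogeny chain between integral models of $j = 1728$ (such as $y^2 = x^3 + x$ or $y^2 = x^3 - 4x$) and the integral models of $j = 8000$ used in the proof of Proposition \ref{prop:folding}, obtained by iteratively factoring $\Phi_2(j, Y)$ over $\mathbb{Z}$ at intermediate vertices; reducing such a chain modulo $p$ produces a path in $\mathcal{G}_2(\mathbb{F}_p)$, forcing the two folded components to agree.

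For new edges, Proposition \ref{prop:EApnot7mod8}(2) restricts any new edge in $\slp[2]$ to join two supersingular $\mathbb{F}_p$-vertices outside $\{0, 1728, -3375\}$ that are roots of $H_{-15}(X)$. By Lemma \ref{lem:doubleedge2}, such roots lie in $\mathbb{F}_p$ and are supersingular precisely when $p \equiv 11$ or $14 \pmod{15}$, with the small-prime cases $p = 7, 13$ ruled out by $p \ge 17$. Combining $p \equiv 7 \pmod 8$ with these residues modulo $15$ via the Chinese Remainder Theorem gives $p \equiv 71$ or $119 \pmod{120}$, producing case (1); the remaining residue classes $p \equiv 7, 23, 31, 47, 79, 103 \pmod{120}$ violate the condition of Lemma \ref{lem:doubleedge2}, so no new edges arise and case (2) follows. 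The ``may or may not'' phrasing in case (1) reflects that the two endpoints of the new $H_{-15}$-edge can either lie in the same stacked (or folded) component of $\operatorname{Im}(\Gamma)$, yielding only a new multi-edge, or in distinct components, yielding a genuine attachment; unlike the $p \equiv 3 \pmod 8$ situation, the larger surface cycles available here make this dichotomy depend on finer arithmetic of $p$ than a simple congruence modulo $120$.
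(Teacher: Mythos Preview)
Your handling of new edges is correct and matches the paper: Lemma~\ref{lem:doubleedge2} plus CRT with $p\equiv 7\pmod 8$ gives exactly the split into $p\equiv 71,119\pmod{120}$ versus the complementary residues.

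However, your plan for showing that $j=1728$ and $j=8000$ lie on the \emph{same} volcano has a genuine gap. You propose to exhibit a $\mathbb{Z}$-rational $2$-isogeny chain between integral models of these two $j$-invariants and reduce modulo~$p$. No such chain exists: over $\mathbb{C}$, the curve with $j=1728$ has CM by $\mathbb{Z}[i]\subset\mathbb{Q}(i)$, while the curve with $j=8000$ has CM by $\mathbb{Z}[\sqrt{-2}]\subset\mathbb{Q}(\sqrt{-2})$. Isogenous CM elliptic curves share the same CM field, so these two curves are not isogenous over any characteristic-zero field. Concretely, iterating $\Phi_2$ from $j=1728$ over $\mathbb{Z}$ only produces $j$-invariants with CM by orders in $\mathbb{Q}(i)$ (e.g.\ $1728$ and $287496$, as used in Theorem~\ref{thm:spinestructure_ell2_p3mod8}), never $8000$.

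The paper's argument is quite different and worth internalizing. It uses the parity of $h(-p)$: by Proposition~\ref{prop:classno}, $h(-p)$ is odd, so every surface cycle in $\mathcal{G}_2(\mathbb{F}_p)$ has odd length. One copy of $1728$ sits on the surface (citing \cite[Ex.~3.8]{Arpin}), and the component containing it folds. Folding is an involution on an odd-length cycle, which forces a fixed \emph{edge}, i.e.\ two adjacent surface vertices sharing a $j$-invariant. By Proposition~\ref{prop:folding}, the only $j$-invariant for which adjacent twists are $2$-isogenous over $\mathbb{F}_p$ is $8000$. Hence $8000$ must lie on the same surface cycle as $1728$, and the folded component is unique.
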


\begin{proof}
We proceed as in the proofs of the previous two theorems. 
By Proposition~\ref{prop:folding}, the only components of $\mathcal{G}_2(\Fp)$ which could possibly fold are those containing $j = 8000$ and $j = 1728$. For $p \equiv 7\pmod{8}$, both these are supersingular, and are distinct for $p > 13$. By \cite[Ex.\ 3.8]{Arpin}, exactly one of the occurrences of 1728 in $\mathcal{G}_2(\Fp)$ lies on the surface of a volcano. Since $h(-p)$ is odd by Proposition \ref{prop:classno}, any volcano rim contains an odd number of vertices. In order for folding to occur, this rim must also contain two adjacent vertices with the same $j$-invariant, which is only possible for $j = 8000$. Hence the unique component containing $j = 1728$ and $j = 8000$ folds. 
 
A new edge is added when $p\equiv 11, 14\pmod{15}$ by Proposition~\ref{prop:EApnot7mod8}, but this may or may not be an edge attachment. Other than this edge, there are no new non-loop edges, so edge attachment cannot occur. Combining $p\equiv 11,14\pmod{15}$ with $p\equiv 7\pmod{8}$ gives $p\equiv 71,119\pmod{120}$.
\end{proof}

\begin{remark}
    Theorem~\ref{thm:spinestructure_ell2_p7mod8} shows that precisely one connected component of $\mathcal{G}_2(\Fp)$ folds for $p\equiv 7\pmod{8}$. The structure of this folding component tells us that the smallest positive integer $k$ such that there exists an $\Fp$-rational isogeny of degree~$2^k$ between an elliptic curve with $j = 8000$ and an elliptic curve with $j = 1728$ is $k =  (\operatorname{ord}(\mathfrak{l}_2) - 1)/2$, where $\operatorname{ord}(\mathfrak{l}_2)$ denotes the (necessarily odd) order of a prime ideal above 2 in the class group of $\mathbb{Q}(\sqrt{-p})$. 
\end{remark}

\begin{remark}
Whether or not the new edge of Theorem~\ref{thm:spinestructure_ell2_p7mod8} (1) produces an edge attachment depends entirely on whether the roots of $H_{-15}(X)$ belong to the same connected component of $\mathcal{G}_2(\Fp)$. When $p\equiv 7 \pmod{8}$, the volcano structure of this graph leaves too many possibilities. There is no single simple condition to establish the existence of an endomorphism etc., so we are not able to determine whether or not the new (double) edge is attaching. We provide examples for each case. 
\end{remark}  

\begin{example}[$p = 71$, no edge attachment]
For $p = 71$, edge attachment is in principle possible by Theorem~\ref{thm:spinestructure_ell2_p7mod8}; however, it does not occur. The graph $\mathcal{G}_2(\mathbb{F}_{71})$ consists of a single component. It has $7$ vertices on the surface joined to $7$ vertices on the floor. This component folds and there is a new edge, but clearly this cannot be an edge attachment: any time $\mathcal{G}_2(\Fp)$ consists of a single connected component, edge attachment is not possible. See Figure~\ref{fig:p71ell2}.
\end{example}

\begin{figure}[h]
    \centering
    \begin{subfigure}[b]{0.45\textwidth}
        \centering
        \includegraphics[width=\linewidth]{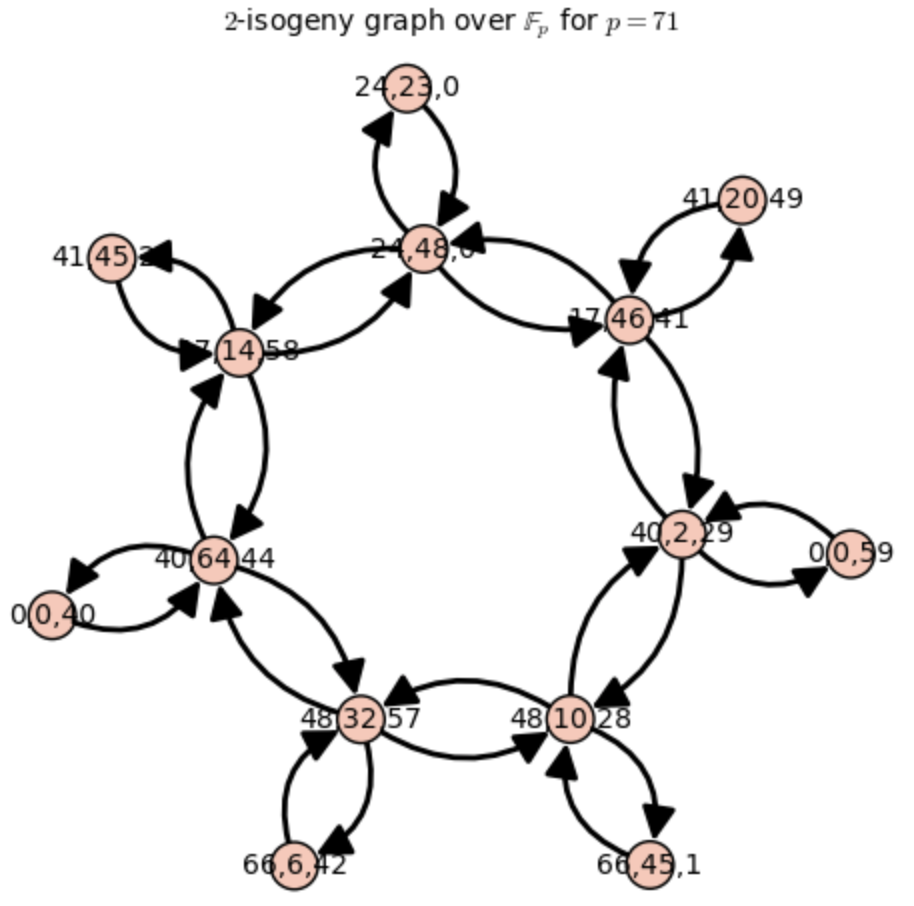}
        \caption{}
    \end{subfigure}\hfill
    \begin{subfigure}[b]{0.45\textwidth}
        \centering
        \includegraphics[width=\linewidth]{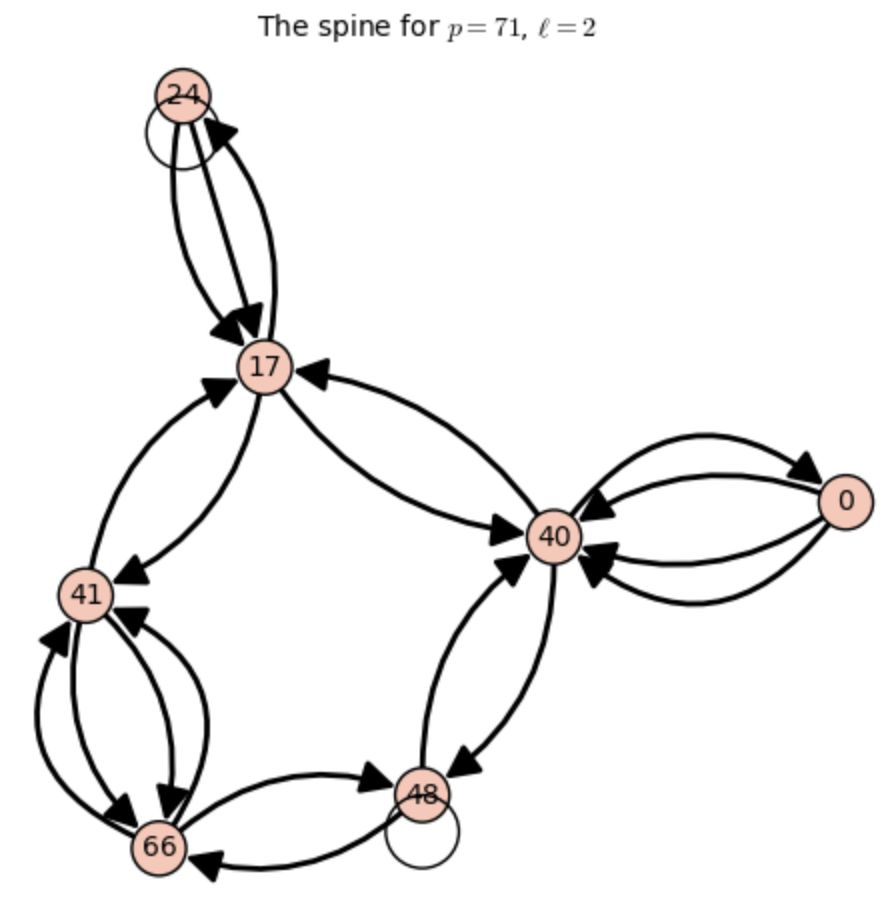}
        \caption{}
    \end{subfigure}
    \caption{(A): The graph $\mathcal{G}_2(\mathbb{F}_{71})$, 
    with vertices labeled by the triple of invariants $(j,c_4,c_6)$. (B): The spine graph $\mathcal{S}_2^{71}$,
    with vertices labeled by $j$-invariant. Images created in \cite{sage}.}
    \label{fig:p71ell2}
\end{figure}

\begin{example}[$p = 1319$, edge attachment]
For $p = 1319\equiv 71\pmod{120}$, edge attachment is possible by Theorem~\ref{thm:spinestructure_ell2_p7mod8}, and in fact it occurs. The graph $\mathcal{G}_2(\mathbb{F}_{1319})$ has five connected components. Each component is a volcano with 9 vertices on the surface and 9 vertices on the floor. Two pairs of connected components stack and the remaining connected component folds. The two stacked components attach at a new edge at the vertices with $j$-invariants 446 and 1103. See Figure~\ref{fig:enter-label}.
\end{example}

\begin{figure}
    \centering
    \begin{subfigure}[b]{0.355\textwidth}
        \centering
        \includegraphics[width=\linewidth]{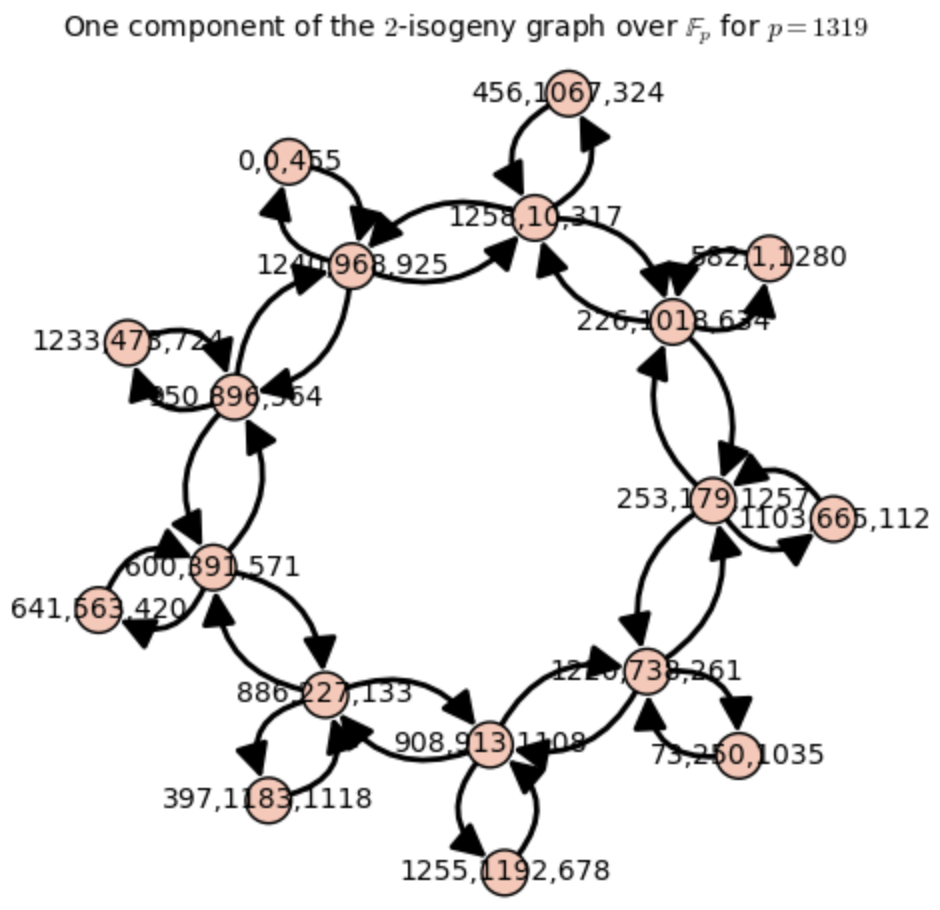}
        \caption{}
    \end{subfigure}\hfill
    \begin{subfigure}[b]{0.545\textwidth}
        \centering
        \includegraphics[width=\linewidth]{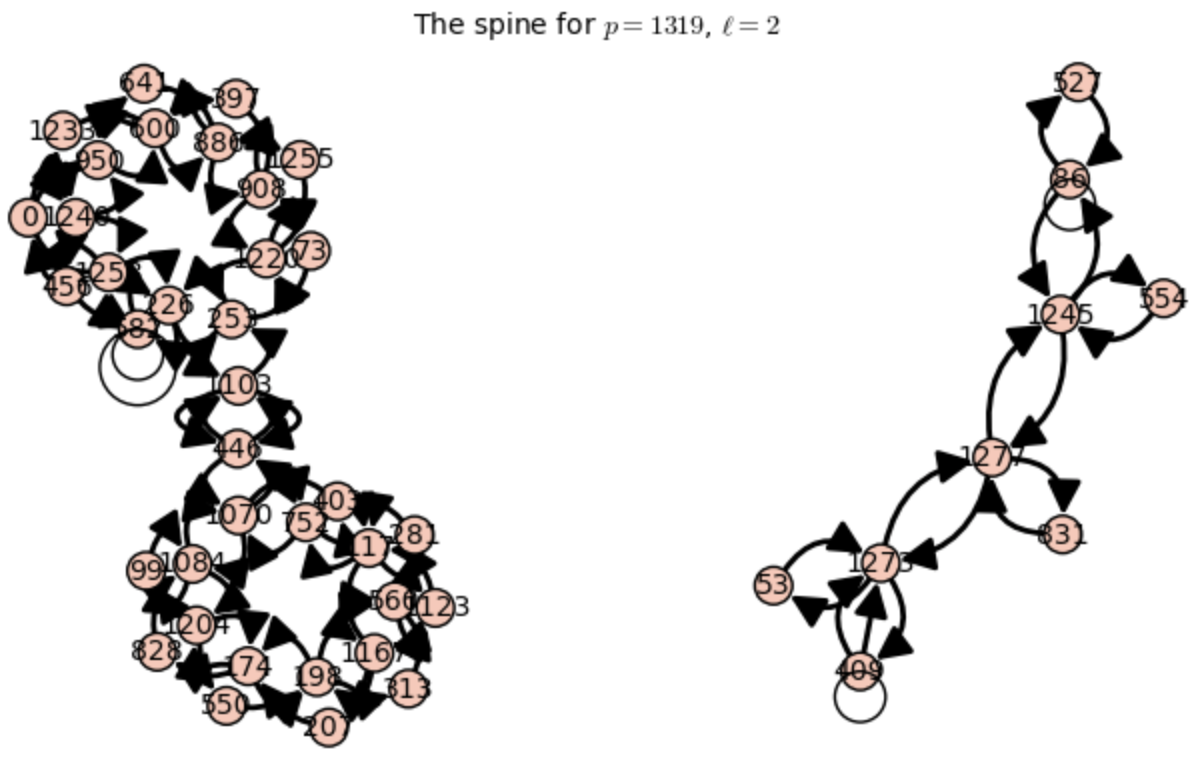}
        \caption{}
    \end{subfigure}
    \caption{(A): One (of five) connected components of $\mathcal{G}_2(\mathbb{F}_{1319})$,
    with vertices labeled by the triple of invariants $(j,c_4,c_6)$. (B): The spine graph $\mathcal{S}_2^{1310}$,
    with vertices labeled by $j$-invariant. Images created in \cite{sage}.}
    \label{fig:enter-label}
\end{figure}

\section{Structure of $\mathcal{S}_\ell^p$ for $\ell \ge 3$} \label{sec:ell3structurethms}

The process of moving from $\mathcal{G}_\ell(\Fp)$ to $\mathcal{S}_\ell^p$ for $\ell \ge 3$ is less involved than the analogous procedure for $\ell = 2$ due to the substantially simpler structure of $\mathcal{G}_\ell(\Fp)$ as a collection of disjoint cycles. We provide an overview of the general method for computing spine structures, with particular focus on the case $\ell = 3$, where we describe the structure of $\mathcal{S}_3^p$ in a manner similar to Theorems \ref{thm:spinestructure_ell2_p1mod4}-\ref{thm:spinestructure_ell2_p7mod8}. 

Note that if $(\frac{-p}{\ell}) = -1$, then $\gf$ has no edges, including loops, by Theorem~\ref{thm:DG} and Corollary~\ref{cor:loops23}. In this case, all components (i.e.\ isolated vertices) stack, none fold, there is no vertex attachment, and only new edges are introduced.

\smallskip

\noindent \textbf{Step 1: Loops.} Determine the vertices in $\gfb$ belonging to $\Fp$ that incur loops by factoring $\Phi_\ell(X,X)$ over $\Fp$ and determining the congruence classes for $p$ such that these vertices correspond to supersingular $j$-invariants in $\Fp$. For edge attachment (investigated in step 3), also ascertain which of these loops belong to $\gf$ via Proposition \ref{prop:loops}.

By Corollary \ref{cor:loops23}, $\mathcal{G}_3(\Fp)$ contains loops loops only for $\ell = 11$. The loops in $\mathcal{G}_3(\Fpbar)$ are given as follows. 

\begin{lemma}[Loops in $\mathcal{G}_3(\Fpbar)$]\label{lem:loops3}
Let $p \ne 2, 3, 11$. Loops occur in $\mathcal{G}_3(\Fpbar)$ at vertices corresponding to precisely the following $j$-invariants, all belonging to $\Fp$:
\begin{align*}
    0 \text{ and } 54000 &\text{ if }p \equiv 2 \pmod{3} \\
    8000 & \text{ if }p\equiv 5,7\pmod{8}\\
    -32768 &\text{ if }p\equiv 2, 3, 6, 7, 8, 10 \pmod{11}.
\end{align*}
\end{lemma}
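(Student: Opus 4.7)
The plan is to mirror the proof of Lemma~\ref{lem:loops2}, with $\ell=2$ replaced by $\ell=3$. Loops in $\mathcal{G}_3(\Fpbar)$ correspond precisely to the roots of $\Phi_3(X,X) \pmod{p}$, so the task reduces to (i) factoring $\Phi_3(X,X)$ over $\ZZ$ into Hilbert class polynomials, and (ii) using the third bullet point of Section~\ref{sec:sig} (a root of $H_\Delta(X)$ is supersingular modulo $p$ iff $p$ is inert in $\mathbb{Q}(\sqrt{\Delta})$) to translate each root into a congruence condition on $p$.

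For step (i), I would identify the roots of $\Phi_3(X,X)$ via CM theory: they are $j$-invariants of elliptic curves whose endomorphism ring contains an element $\alpha$ of norm $3$. Writing $\alpha^2 - t\alpha + 3 = 0$ with $t \in \ZZ$, negativity of the discriminant $t^2 - 12$ forces $t \in \{0, \pm 1, \pm 2, \pm 3\}$, yielding the four imaginary quadratic orders of discriminants $-12, -11, -8, -3$, namely $\ZZ[\sqrt{-3}]$, $\ZZ[(1+\sqrt{-11})/2]$, $\ZZ[\sqrt{-2}]$, $\ZZ[\omega]$. Each has class number one, so the corresponding Hilbert class polynomials are linear:
\[
H_{-3}(X) = X, \qquad H_{-8}(X) = X - 8000, \qquad H_{-11}(X) = X + 32768, \qquad H_{-12}(X) = X - 54000.
\]
The factorization of $\Phi_3(X,X)$ is then a product of these four linear factors with multiplicities (determinable explicitly, but irrelevant to identifying the set of roots).

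For step (ii), I would translate each inertia condition into an explicit congruence on $p$. The discriminants $-3$ and $-12$ both lie in $\mathbb{Q}(\sqrt{-3})$, and $p$ is inert there iff $\left(\tfrac{-3}{p}\right) = -1$, equivalently $p \equiv 2 \pmod{3}$; this accounts simultaneously for $j = 0$ and $j = 54000$. For $-8$, $p$ is inert in $\mathbb{Q}(\sqrt{-2})$ iff $\left(\tfrac{-2}{p}\right) = -1$, i.e.\ $p \equiv 5, 7 \pmod{8}$, giving the condition for $j = 8000$. For $-11$, apply quadratic reciprocity to $\left(\tfrac{-11}{p}\right)$ (using that $-11 \equiv 1 \pmod{4}$ is fundamental) to rewrite it in terms of $\left(\tfrac{p}{11}\right)$, then list the non-residues modulo $11$ to obtain the corresponding congruence classes governing $j = -32768$. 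Finally, since each linear factor has integer coefficients, every root automatically lies in $\Fp$.

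The steps are all routine; the main obstacles are bookkeeping rather than conceptual. One must exclude small primes where the four listed $j$-invariants can collapse modulo $p$ or produce degenerate coincidences — in particular the primes $p = 2, 3, 11$ (note $-32768 \equiv 1728 \pmod{11}$, and $0, 54000$ are distinct only for $p \neq 2, 3, 5$), which is why the hypothesis $p \neq 2, 3, 11$ is imposed. A secondary check is to confirm that the inertia criterion of Section~\ref{sec:sig} does apply to the \emph{non-maximal} order $\ZZ[\sqrt{-3}]$ yielding $j = 54000$; this follows because supersingularity of a CM $j$-invariant depends only on the splitting behavior of $p$ in the quadratic \emph{field}, not in the specific order.
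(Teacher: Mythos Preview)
Your proposal is correct and follows essentially the same route as the paper: factor $\Phi_3(X,X)$ into the Hilbert class polynomials $H_{-3}$, $H_{-12}$, $H_{-8}$, $H_{-11}$ and read off the congruence conditions from inertness of $p$ in the corresponding quadratic fields. The only difference is that the paper simply records the explicit factorization $\Phi_3(X,X) = -X(X-54000)(X-8000)^2(X+32768)^2$, whereas you derive the set of roots conceptually by enumerating imaginary quadratic orders containing an element of norm~$3$; both arrive at the same four linear factors and the same congruence conditions.
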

\begin{proof}
  We have 
  \[  \Phi_3(X,X) = -X(X - 54000)(X - 8000)^2(X + 32768)^2 = H_{-3} H_{-12} H_{-8}^2(X) H_{-11}(X)^2. \]
   The congruence conditions on $p$ come from the inertness of $p$ in the corresponding quadratic fields. 
\end{proof}

The $j$-invariants of Lemma~\ref{lem:loops3} are not all distinct for $p \le 5$. 

\smallskip

\noindent \textbf{Step 2: Folding and vertex attachment.} If $p \equiv 3 \pmod{4}$, then there are two different components containing 1728 by Theorem \ref{thm:3.18A+}. The two components fold and get attached at vertex $j = 1728$, and all other components stack.

If $p \equiv 1 \pmod{4}$, then the components that can fold are those containing two neighbours in $\gf$ with the same $j$-invariant, where $\gf$ does not incur a loop (but of course $\gfb$ does). These vertices, along with loops (see step 1), correspond to vertices $j \in \Fp$ such that $\Phi_\ell(j, j) = 0$. 

\begin{proposition}[Folding and vertex attachment for $\ell = 3$] \label{prop:folding3}
If $p \equiv 11 \pmod{12}$, then only the two components containing $j = 1728$ fold and attach at 1728. If $p \equiv 5 \pmod{12}$, then only the component containing $j = 0$ folds and there is no vertex attachment. Else there is neither folding nor vertex attachment. 
\end{proposition}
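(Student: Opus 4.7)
The plan is to handle vertex attachment and folding separately by case analysis modulo $12$. Recall from the consequence of \cite[Prop.~3.16 and Cor.~3.24]{Arpin} cited after Definition~\ref{def:s-f-ea-va} that vertex attachment is only possible at $j = 1728$, which is supersingular precisely for $p \equiv 3 \pmod{4}$. This already rules out vertex attachment for $p \equiv 1, 5 \pmod{12}$ at the outset.

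For $p \equiv 3 \pmod{4}$, Proposition~\ref{prop:classno} gives that the class number $h(-p)$ is odd, so any ideal class in $\mathrm{Cl}(\mathbb{Q}(\sqrt{-p}))$ has odd order, meeting the hypothesis of Theorem~\ref{thm:3.18A+}. The theorem then asserts that the two components of $\mathcal{G}_3(\Fp)$ containing $j = 1728$ fold and attach at $j = 1728$, while all other components stack. I would then split on the splitting of $3$ in $\mathbb{Q}(\sqrt{-p})$. For $p \equiv 11 \pmod{12}$ the prime $3$ splits, so $[\mathfrak{l}]$ has nontrivial odd order, the two relevant components are nondegenerate cycles, and the fold-and-attach conclusion holds meaningfully, yielding the first case of the proposition. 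For $p \equiv 7 \pmod{12}$ the prime $3$ is inert, so $\mathfrak{l} = (3)$ is principal of order one and every component of $\mathcal{G}_3(\Fp)$ is a single isolated vertex. A single-vertex component contains only one quadratic twist of its $j$-invariant and hence fails the folding criterion of Definition~\ref{def:s-f-ea-va}; two such isolated vertices sharing a $j$-invariant have identical empty neighbour sets, so they stack rather than attach at a vertex. Thus for $p \equiv 7 \pmod{12}$ neither folding nor vertex attachment occurs.

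For $p \equiv 1 \pmod{4}$, all components of $\mathcal{G}_3(\Fp)$ are cycles of common length $|\langle[\mathfrak{l}]\rangle|$ in $\mathrm{Cl}(-4p)$ by Theorem~\ref{thm:DG}(4). By the Step~2 criterion recalled at the start of this section, a cycle can fold only if it contains a supersingular $j \in \Fp$ satisfying $\Phi_3(j,j) \equiv 0 \pmod{p}$; by Lemma~\ref{lem:loops3} the candidates are $j \in \{0, 54000, 8000, -32768\}$, subject to their respective supersingularity congruences. For $p \equiv 1 \pmod{12}$ the prime $3$ is inert, cycles degenerate to single vertices, and no folding can occur. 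For $p \equiv 5 \pmod{12}$ the prime $3$ splits, and both $j = 0$ and $j = 54000$ are supersingular (as may be $j = 8000$ and $j = -32768$ for finer congruences on $p$).

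The crux is to isolate $j = 0$ among the candidates in the $p \equiv 5 \pmod{12}$ case. I would leverage the extra $\Fpbar$-automorphisms of $j = 0$ from $\mathbb{Z}[\zeta_3]$: the loop at $j = 0$ in $\mathcal{G}_3(\Fpbar)$ arises from the endomorphism $1-\zeta_3$ of $E_0$, and since $\zeta_3^p = \zeta_3^2 \neq \zeta_3$ for $p \equiv 2 \pmod{3}$, this endomorphism is Frobenius-conjugate to its dual $1-\zeta_3^2$. This special structure, absent at the candidate $j = 54000$ (whose loop comes from $\sqrt{-3}$ in the non-maximal order $\mathbb{Z}[\sqrt{-3}]$) and analogously at $j = 8000$ and $j = -32768$, should force the two $\Fp$-twists of $j = 0$ into the same $\langle[\mathfrak{l}]\rangle$-orbit in $\mathrm{Cl}(-4p)$, making that orbit twist-symmetric and the corresponding cycle fold; at the other candidates, the two twists instead lie in distinct orbits that stack rather than fold under $\Gamma$. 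The main obstacle is to make this dichotomy rigorous by tracking the twist involution inside $\mathrm{Cl}(-4p)$ and showing how the extra $\mathbb{Z}[\zeta_3]$-structure at $j = 0$, as opposed to the other candidates, forces twist-invariance of its orbit, along the lines of the class-group framework developed in \cite[\S 3]{Arpin}.
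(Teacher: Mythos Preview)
Your case analysis for $p \equiv 3 \pmod 4$ is sound; the paper handles $p \equiv 11 \pmod{12}$ by a bare citation of Theorem~\ref{thm:3.18A+} and folds $p \equiv 7 \pmod{12}$ into the ``else'' clause via the single observation $\bigl(\tfrac{-p}{3}\bigr) = -1$, so $\mathcal{G}_3(\Fp)$ has no edges whatsoever. Your more detailed argument that isolated vertices neither fold nor vertex-attach reaches the same conclusion and is fine.

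The real gap, which you flag yourself, is the $p \equiv 5 \pmod{12}$ case. Your proposed route through the extra $\mathbb{Z}[\zeta_3]$-automorphisms, the Frobenius action on $1 - \zeta_3$, and the twist involution on $\mathrm{Cl}(-4p)$ is substantially harder than what is needed, and you do not complete it. The paper bypasses all of this with an explicit construction: take $E_0 : y^2 = x^3 + 1$ and its quadratic twist by $-3$, namely $E_0^t : y^2 = x^3 - 3$; since $p \equiv 2 \pmod{3}$ we have $-3 \notin (\Fp^\times)^2$, so these are genuinely non-isomorphic over $\Fp$. The point $(0,1) \in E_0(\Fp)$ has order $3$, and the $3$-isogeny with kernel $\langle (0,1) \rangle$ is $\Fp$-rational with codomain $E_0^t$. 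That one $\Fp$-edge places both twists of $j = 0$ in the same component of $\mathcal{G}_3(\Fp)$, so that component folds. The claim that no \emph{other} component folds is then obtained by invoking the argument of \cite[Thm.~3.18]{Arpin}, not by the CM-order dichotomy you sketch for $54000$, $8000$, and $-32768$.

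In short: replace your class-group heuristic with the explicit Weierstrass models and the visible $\Fp$-rational $3$-isogeny, and the proof closes.
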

\begin{proof}
For $p \equiv 11 \pmod{12}$, this is Theorem \ref{thm:3.18A+}. Suppose $p \equiv 5 \pmod{12}$. Then $j = 0, 54000$ are supersingular (they are the roots of $H_{-3}(X)$ and $H_{-12}(X)$, respectively), whereas 1728 is not. Two elliptic curves defined over $\Fp$ with $j = 0$ are $E_0: y^2 = x^3 + 1$ and $E_0^t: y^2 = x^3 - 3$, its twist by $-3$. Since $-3 \notin \Fp$, the are non-isomorphic over $\Fp$. There is a 3-isogeny from $E_0$ to $E_p^t$ with kernel $\langle (0,1) \rangle$ and hence defined over $\Fp$, i.e.\ corresponding to an edge in $\mathcal{G}_3(\Fp)$. 

Now 0 and 54000 are the only $\Fp$-vertices $j$ in $\mathcal{G}_3(\Fpbar)$ that are roots of $\Phi_3(j,Y)$. Using the same reasoning as in the proof of \cite[Thm.\ 3.18]{Arpin}, we can show that all their occurrences in $\mathcal{G}_3(\Fp)$ belong to the same component, and this is the only component that folds. 

For all other primes $p$, we have $(\frac{-p}{3}) = -1$, so $\mathcal{G}_3(\Fp)$ contains no edges.
\end{proof}

\noindent \textbf{Step 3: New edges and edge attachment.} From the factorization of the polynomials Res$_\ell(X)$ into Hilbert class polynomials, determine the new edges; it may not always be possible to ascertain whether or not they attach. 

We sketch the idea for $\ell = 3$. We have 
\begin{equation} 
\mbox{Res}_3(X) = -3^3X^2(X-8000)^2(X-1728)^2 H_{-20}(X) H_{-32}(X)H_{-35}(X). \label{eq:Res03} 
\end{equation}
We first find congruence conditions on $p$ under which $H_{-20}(X)$, $H_{-32}(X)$, $H_{-35}(X)$ have supersingular roots in $p$, and when any two or all three of these polynomials share a root. Next, rather than resorting to another resultant polynomial to check for triple edges as we did in Proposition \ref{prop:TripleEdges}, we investigate edge incidence at each of the loop vertices $j$ listed in Lemma \ref{lem:loops3} by considering the polynomial $\Phi_3(j,Y)$. For each of these vertices, we check whether there are new loops at $j$ or a new edge from $j$ to 0, 1728 or another vertex (resulting in a multi-edge). We have 
\begin{align*}
\Phi_3(0,Y) &= Y(Y+12288000)^3 = Y H_{-27}(Y)^3, \\
\Phi_3(54000,Y) &= H_{-12}(Y) H_{-108}(Y) , \\
\Phi_3(8000, Y) &= (Y-8000)^2 H_{-72}(Y) , \\
\Phi_3(-32768,Y) &= H_{-11}(Y)^2 H_{-99}(Y) , \\
\Phi_3(1728,Y) &= H_{-36}(Y)^2.
\end{align*}
For each of these polynomials, we check whether the factors have roots in $\Fp$ that represent supersingular $j$-invariants, and whether roots between factors collide modulo $p$. All these conditions impose congruence restrictions on $p$. Putting it all together, we obtain three spine structure theorems, differentiated by number of folding components. For brevity, put 
\[  \mathcal{P}_3 = \{ 5, 7, 11, 13, 17, 19, 23, 29, 31, 41, 47, 59, 61, 71, 79, 89, 101, 139, 151, 199, 271 \}. \]
The spines for $p \in \mathcal{P}_3$, along with the graphs $\gf$ and $\gfb$, are explicitly described in the document \href{https://github.com/TahaHedayat/LUCANT-2025-Supersingular-Ell-Isogeny-Spine/blob/main/SmallCharacteristicGraphDescription.pdf}{\texttt{Small\-Characteristic\-Graph\-Description.pdf}} at~\cite{LUCANTGitHub}, so we only consider primes $p \notin \mathcal{P}_3$ here.

\begin{theorem}[Spine structure, $\ell = 3$, no folding] \label{thm:spinestructure_ell3-nofold}
Suppose $p \notin \mathcal{P}_3$. In the following cases, no connected component of $\mathcal{G}_3(\Fp)$ folds and no vertex attachment takes place. In addition to new loops, new edges are added as follows.
\begin{enumerate}
\item None when 
\begin{align*}
    p \equiv \ & 1, 13, 37, 43, 67, 73, 97, 109, 121, 157, 163, 169, 187, 193,  \\ & 253, 277, 283, 289, 307, 313, 337, 361, 373, 397, 403, 421, \\ & 433, 457, 493, 517, 523, 529, 541, 547, 577, 589, 613, 643, \\&  667, 673, 697, 709, 733, 757, 781, 787, 793, 817 \pmod{840}
\end{align*}
\item  One when 
\begin{align*}
    p \equiv \ & 61, 103, 127, 181, 211, 223, 229, 241, 247, 331, 349, 367, \\ &  379, 409, 463, 481, 487, 499, 571, 583, 601, 607, 649, 661, \\ & 703, 727, 739, 769, 823, 829 \pmod{840}.
\end{align*}
\item Two that do no share any vertices when
\[ p \equiv 19, 79, 139, 151, 319, 451, 619, 631, 691, 751, 799, 811 \pmod{840} \]
 \item Three that do no share any vertices when
  $$p \equiv 31, 199, 271, 391, 439, 559 \pmod{840}.$$       
\end{enumerate}
\end{theorem}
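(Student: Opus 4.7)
The plan is to proceed in three stages, following the general roadmap for $\ell = 3$ laid out in Section~\ref{sec:ell3structurethms}, and then combine the resulting congruences via CRT modulo $840 = 8 \cdot 3 \cdot 5 \cdot 7$.

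\emph{Ruling out folding and vertex attachment.} A direct inspection confirms that every residue class listed in cases (1)--(4) satisfies $p \equiv 1 \pmod{3}$, hence $\left(\tfrac{-p}{3}\right) = -1$. By Theorem~\ref{thm:DG}(4) the cycles of $\mathcal{G}_3(\Fp)$ all have length $1$ (the class of a prime above $3$ is trivial when $3$ is inert), and by Corollary~\ref{cor:loops23} there are no loops for $p \notin \mathcal{P}_3$. Hence $\mathcal{G}_3(\Fp)$ consists of isolated vertices, and Proposition~\ref{prop:folding3} excludes both folding and vertex attachment, since we are not in the cases $p \equiv 5, 11 \pmod{12}$.

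\emph{Contribution of each Hilbert class polynomial.} By the factorization~\eqref{eq:Res03} of $\mathrm{Res}_3(X)$, the new non-loop edges in $\mathcal{S}_3^p$ arise from pairs of supersingular $\Fp$-rational roots of $H_{-20}(X)$, $H_{-32}(X)$, $H_{-35}(X)$. For each $\Delta \in \{-20, -32, -35\}$, the roots reduce to supersingular $j$-invariants iff $p$ is inert in $\mathbb{Q}(\sqrt{\Delta})$ (by the criterion recalled in Section~\ref{sec:sig}), while the splitting of $H_\Delta(X)$ over $\Fp$ is governed by whether the squarefree part of its polynomial discriminant is a quadratic residue mod $p$. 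A direct calculation gives
\begin{align*}
\mathrm{disc}(H_{-20}) &= 5 \cdot (2^9 \cdot 5 \cdot 13 \cdot 17)^2, \\
\mathrm{disc}(H_{-32}) &= 2 \cdot (2^4 \cdot 5^3 \cdot 7^2 \cdot 13 \cdot 29)^2, \\
\mathrm{disc}(H_{-35}) &= 5 \cdot (2^{16} \cdot 5 \cdot 7 \cdot 23)^2.
\end{align*}
Since the primes $\{2, 5, 7, 13, 17, 23, 29\}$ appearing in the square parts all lie in $\mathcal{P}_3 \cup \{2\}$, for $p \notin \mathcal{P}_3$ the two roots of each $H_\Delta$ are distinct modulo $p$. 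Combining the inertness and splitting conditions gives the following criteria: $H_{-20}$ contributes iff $p \equiv 3 \pmod 4$ and $p \equiv \pm 1 \pmod 5$; $H_{-32}$ contributes iff $p \equiv 7 \pmod 8$; and $H_{-35}$ contributes iff $p \equiv \pm 1 \pmod 5$ and $p \equiv 3, 5, 6 \pmod 7$.

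\emph{Disjointness and CRT tally.} For cases (3) and (4), I verify in addition that distinct contributing polynomials have disjoint root sets modulo $p$. This reduces to checking that the three integer resultants $\mathrm{Res}(H_{-\Delta_1}, H_{-\Delta_2})$ for $\Delta_i \in \{-20, -32, -35\}$ have no prime divisor lying outside $\mathcal{P}_3$ that simultaneously satisfies both polynomials' contribution conditions. Finally, combining the three criteria above with $p \equiv 1 \pmod 3$ via CRT modulo $840$ partitions the $\phi(840)/2 = 96$ relevant residue classes --- by inclusion--exclusion on which subset of $\{H_{-20}, H_{-32}, H_{-35}\}$ contributes --- into subsets of sizes
$$48\ (\text{none}),\quad 30\ (\text{exactly one}),\quad 12\ (\text{exactly two}),\quad 6\ (\text{all three}),$$
matching cases (1)--(4) respectively.

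\emph{Main obstacle.} The principal technical burden is the explicit factorization of the three polynomial discriminants and, more delicately, the three pairwise resultants: one must confirm that every prime divisor of these fixed integers either lies in $\mathcal{P}_3$ or is automatically excluded by the splitting conditions. Once these finite computations are in hand, the CRT enumeration is routine, though it must be executed carefully to reproduce the explicit lists of residue classes given in the statement.
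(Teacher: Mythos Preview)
Your plan follows the paper's own strategy: reduce to $p\equiv 1\pmod 3$ via Proposition~\ref{prop:folding3}, then count contributions from $H_{-20},H_{-32},H_{-35}$ and combine by CRT modulo $840$. The splitting/supersingularity conditions you derive for each $H_\Delta$ are correct, and the inclusion--exclusion tally matches.

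There is, however, a genuine gap in your second step. You assert that ``the new non-loop edges in $\mathcal{S}_3^p$ arise from pairs of supersingular $\Fp$-rational roots of $H_{-20},H_{-32},H_{-35}$'', but the factorization of $\mathrm{Res}_3(X)$ alone does not give this: the linear roots $8000$ and $1728$ of $\mathrm{Res}_3$ (and the loop vertex $-32768$) could in principle also carry new non-loop spine edges. For instance, when $p\equiv 7\pmod{12}$ the vertex $1728$ is supersingular and $\Phi_3(1728,Y)=H_{-36}(Y)^2$, so $1728$ has double edges to the two roots of $H_{-36}$; if those roots lay in $\Fp$ you would have two extra new edges and every count in cases (1)--(4) would be off. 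The paper rules this out precisely by its extra step of factoring $\Phi_3(j,Y)$ for each $j\in\{0,54000,8000,-32768,1728\}$ and checking that the resulting non-loop neighbours (roots of $H_{-27},H_{-108},H_{-72},H_{-99},H_{-36}$) are never in $\Fp$ for $p\notin\mathcal{P}_3$ in this regime. Your plan omits this check entirely.

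Two smaller points also need attention. First, checking pairwise resultants of the $H_\Delta$ ensures the six roots are \emph{distinct}, but not that a root of $H_{-20}$ is never $\Phi_3$-\emph{adjacent} to a root of $H_{-32}$ or $H_{-35}$; such an adjacency would add a fourth edge. Second, you never verify that the two roots of each contributing $H_\Delta$ are actually $3$-isogenous to each other (so that each $H_\Delta$ contributes an edge at all); this follows because $3$ splits in each of the three orders and the prime above $3$ is non-principal, but it should be stated.
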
 

\begin{theorem}[Spine structure, $\ell = 3$, one component folds] \label{thm:spinestructure_ell3-onefold}
Suppose $p \notin \mathcal{P}_3$. In the following cases, the connected component of $\mathcal{G}_3(\Fp)$ containing $j=0$ folds and no vertex attachment takes place. In addition to new loops, new edges are added as follows.
\begin{enumerate}
\item None when 
\begin{align*}
    p \equiv \ &  17, 29, 53, 113, 137, 149, 173, 197, 221, 233,257, 281, 293, 317, \\ & 353, 377, 389, 401, 437, 449, 473, 533, 557, 569, 593, 617, 641, \\ & 653, 677, 701, 713, 737, 773, 797, 809, 821 \pmod{840} .
 \end{align*}      
 \item One when   
\[ p \equiv 41, 89, 101, 209, 269, 341, 461, 509, 521, 629, 689, 761 \pmod{840} \]
\end{enumerate}
\end{theorem}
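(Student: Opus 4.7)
The plan is to follow the three-step road map laid out at the beginning of Section \ref{sec:ell3structurethms}, specialized to the condition that exactly one component of $\mathcal{G}_3(\Fp)$ folds. First, I would invoke Proposition \ref{prop:folding3} to pin down the folding hypothesis: among all primes $p \notin \mathcal{P}_3$, the unique component of $\mathcal{G}_3(\Fp)$ containing $j=0$ folds precisely when $p \equiv 5 \pmod{12}$. In that regime, $p \equiv 1 \pmod{4}$ forces $1728$ to be non-supersingular, so the $j = 1728$ double-point phenomenon of Theorem \ref{thm:3.18A+} cannot happen, and by \cite[Prop.\ 3.16, Cor.\ 3.24]{Arpin} vertex attachment is ruled out entirely. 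This disposes of the first two structural claims of the theorem simultaneously.

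Next, I would enumerate the possible new edges by factoring the resultant in \eqref{eq:Res03} and examining, one Hilbert factor at a time, which of $H_{-20}(X), H_{-32}(X), H_{-35}(X)$ have supersingular roots in $\Fp$. For each factor $H_{\Delta}(X)$, supersingularity at its roots is equivalent to $p$ being inert in $\mathbb{Q}(\sqrt{\Delta})$, which is a Legendre-symbol condition on $p$ modulo $|\Delta|$; the reduction modulo $p$ lands in $\Fp$ when the corresponding ring class field has a rational point, giving a further Kronecker-symbol condition. Using $|\Delta| \in \{20,32,35\}$, all of these conditions lift to congruences modulo $\operatorname{lcm}(32,20,35) = 1120$, but the relevant quadratic characters only depend on $p$ modulo $8$, $5$, and $7$, which gives the modulus $840$ stated in the theorem. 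I would then remove spurious edges: a root of two different Hilbert factors would collide only at primes dividing the resultant of those factors, a finite set contained in $\mathcal{P}_3$, so these collisions do not occur outside $\mathcal{P}_3$. Finally, I would verify that none of these new edges incidentally attaches to another component, which again holds because we have already excluded vertex attachment and because the folding component consists, by the computation of $\Phi_3(0,Y) = Y\,H_{-27}(Y)^3$ and $\Phi_3(54000,Y) = H_{-12}(Y)H_{-108}(Y)$, of explicitly identifiable $j$-invariants whose membership in the Hilbert roots of $H_{-20}, H_{-32}, H_{-35}$ would again force $p \in \mathcal{P}_3$.

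With the raw congruence conditions in hand, I would then tabulate the number of supersingular roots of $H_{-20}, H_{-32}, H_{-35}$ in $\Fp$ as a function of $p \bmod 840$, subject to $p \equiv 5 \pmod{12}$. Case \textup{(1)} corresponds to all three Hilbert polynomials having no supersingular $\Fp$-roots, while case \textup{(2)} corresponds to exactly one of them contributing a single new edge. A bookkeeping exercise using the Chinese Remainder Theorem produces the explicit residue lists modulo $840$ given in the theorem; I would also have to separately confirm, by inspecting $\Phi_3(j,Y)$ at each loop vertex $j \in \{0, 54000, 8000, -32768\}$ supplied by Lemma \ref{lem:loops3}, that no additional non-loop edges sneak in through multi-edge coincidences at these vertices for $p \not\in \mathcal{P}_3$.

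The main obstacle, as in the proof of Theorems \ref{thm:spinestructure_ell2_p1mod4}--\ref{thm:spinestructure_ell2_p7mod8}, is the purely combinatorial one of pushing the Chinese Remainder Theorem through cleanly without double-counting or missing congruence classes, and of showing that the putative ``exceptional'' small primes where Hilbert roots collide or coincide with the loop $j$-invariants are all accounted for by $\mathcal{P}_3$. A secondary obstacle is carefully justifying that the edges arising from $H_{-20}, H_{-32}, H_{-35}$ land inside the folded component versus between stacked components; since vertex attachment is already excluded and since the folded component's $j$-invariants are pinned down by $\Phi_3(0,Y)$, this amounts to checking that $0, 54000, -12288000$ are never simultaneously roots of the relevant Hilbert polynomials modulo any $p \notin \mathcal{P}_3$, which is a finite verification.
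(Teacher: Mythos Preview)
Your proposal is correct and follows essentially the same approach as the paper: the paper itself gives no line-by-line proof of this theorem but instead lays out the three-step road map (loops via $\Phi_3(X,X)$, folding/vertex attachment via Proposition~\ref{prop:folding3}, new edges via the Hilbert factors of $\mathrm{Res}_3(X)$ in \eqref{eq:Res03} together with the edge-incidence polynomials $\Phi_3(j,Y)$) and then simply states the three structure theorems as the output of the resulting CRT bookkeeping modulo $840$. Your write-up is a faithful expansion of precisely that sketch, including the exclusion of collision primes into $\mathcal{P}_3$; the only superfluous portion is your concern about whether the new edge attaches to the folded component, since the theorem as stated only counts new edges without asserting attachment.
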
 

\begin{theorem}[Spine structure, $\ell = 3$, two components folds] \label{thm:spinestructure_ell3-twofold}
Suppose $p \notin \mathcal{P}_3$. In the following cases, the two connected components of $\mathcal{G}_3(\Fp)$ containing $j=1728$ fold and get attached at $j = 1728$. In addition to new loops, new edges are added as follows.
\begin{enumerate}
\item None when
\[ p \equiv  83, 107, 227, 323, 347, 443, 467, 563,  587, 683, 803, 827 \pmod{840}. \]
\item One when 
\begin{align*}
p \equiv \ & 11, 23, 47, 143, 167, 179, 263, 383, 407, 491, 503, \\ &  527, 611, 647, 659, 743, 767, 779 \pmod{840}.
\end{align*}
\item Two that do no share any vertices when
\[ p \equiv 59, 71, 131, 191, 239, 251, 299, 359, 419, 431, 599, 731 \pmod{840}. \]
\item Three that do no share any vertices when
\[ p \equiv 311, 479, 551, 671, 719, 839 \pmod{840} . \]
\end{enumerate}  
\end{theorem}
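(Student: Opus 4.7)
The plan is to follow the three-step roadmap outlined at the start of Section \ref{sec:ell3structurethms}, specialized to primes $p \notin \mathcal{P}_3$ satisfying $p \equiv 11 \pmod{12}$ (which we will argue is the condition that triggers two foldings at $j=1728$). The folding/attachment portion of the statement will follow directly from Theorem \ref{thm:3.18A+} and Proposition \ref{prop:folding3}; the bulk of the work will be dedicated to enumerating the non-loop new edges and sorting the resulting congruence conditions modulo $840 = 2^3 \cdot 3 \cdot 5 \cdot 7$.

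First I would dispense with folding and vertex attachment. Two components of $\mathcal{G}_3(\Fp)$ containing $j = 1728$ exist precisely when $j = 1728$ is supersingular ($p \equiv 3 \pmod{4}$) and $\mathcal{G}_3(\Fp)$ has non-trivial edges at that vertex, which by Theorem \ref{thm:DG} requires $(\tfrac{-p}{3}) \ne -1$, i.e. $p \equiv 2 \pmod{3}$. Together these give $p \equiv 11 \pmod{12}$. Proposition \ref{prop:classno} then implies $h(-p)$ is odd, so the order of any prime ideal above $3$ in $\mathrm{Cl}(\mathbb{Q}(\sqrt{-p}))$ is odd, and Theorem \ref{thm:3.18A+} yields that these two components fold and get attached at $j = 1728$, while no other component folds or undergoes vertex attachment.

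Next I would enumerate new (non-loop) edges using the factorization
\[
  \mbox{Res}_3(X) = -3^3 X^2 (X-8000)^2 (X-1728)^2 H_{-20}(X) H_{-32}(X) H_{-35}(X).
\]
The double factors $X^2,(X-8000)^2,(X-1728)^2$ account for the loops handled in step 1, and for $j \in \{0, 8000, -32768, 54000\}$ I would inspect the explicit factorizations of $\Phi_3(j,Y)$ listed before Theorem \ref{thm:spinestructure_ell3-nofold} to confirm that these vertices contribute only loops and no additional non-loop multi-edges for $p \notin \mathcal{P}_3$ (by checking that, e.g., $H_{-27}, H_{-72}, H_{-99}, H_{-108}, H_{-36}$ share no roots modulo $p$ with $H_{-20}, H_{-32}, H_{-35}$ outside finitely many excluded primes, all of which lie in $\mathcal{P}_3$). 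Thus the number of genuinely new non-loop edges equals the number of the three polynomials $H_{-20}, H_{-32}, H_{-35}$ that have two distinct supersingular roots in $\Fp$. For each $D \in \{-20,-32,-35\}$, a root of $H_D(X) \bmod p$ is a supersingular $j$-invariant iff $\left(\tfrac{D}{p}\right) = -1$, and since each $H_D$ is a quadratic with known discriminant, the roots lie in $\Fp$ precisely under explicit Legendre conditions that depend on $p$ modulo $8$, $5$, $7$ respectively (via genus theory applied to the ring class field, or equivalently by computing discriminants of these specific Hilbert class polynomials). Combining these three independent conditions with $p \equiv 11 \pmod{12}$ via the Chinese Remainder Theorem, and excluding the finitely many primes in $\mathcal{P}_3$ where roots collide or additional coincidences occur, produces the four lists in cases (1)--(4).

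The main obstacle is the bookkeeping in case (3) and especially (4): to conclude that the two or three new edges do not share any vertices, one must verify that the roots of $H_{-20}, H_{-32}, H_{-35}$ remain pairwise distinct modulo $p$. This amounts to confirming that $p$ does not divide the resultants $\mathrm{Res}(H_{-20},H_{-32})$, $\mathrm{Res}(H_{-20},H_{-35})$, $\mathrm{Res}(H_{-32},H_{-35})$; the primes where this can fail are precisely those small primes absorbed into $\mathcal{P}_3$. A secondary subtlety is ensuring that the new edges from $H_D$ do not coincide with edges already present in the folded/attached component at $j=1728$, which is handled by verifying that $1728$ is not a root of any $H_D \bmod p$ for $p \notin \mathcal{P}_3$, via direct evaluation. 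Once these coincidence checks are completed, the enumeration of congruence classes modulo $840$ is a routine CRT computation from the cases (new edge from $H_{-20}$: yes/no) $\times$ (new edge from $H_{-32}$: yes/no) $\times$ (new edge from $H_{-35}$: yes/no), with the $2^3 = 8$ sub-cases collapsing into the four listed parts according to the total count.
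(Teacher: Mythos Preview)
Your proposal is correct and follows exactly the roadmap the paper lays out in Section~\ref{sec:ell3structurethms} (Steps 1--3), which the paper offers in lieu of a formal proof for Theorems~\ref{thm:spinestructure_ell3-nofold}--\ref{thm:spinestructure_ell3-twofold}. The identification of $p \equiv 11 \pmod{12}$ via Proposition~\ref{prop:folding3}, the reduction of new-edge counting to the three quadratic factors $H_{-20}, H_{-32}, H_{-35}$ of $\mathrm{Res}_3(X)$, and the resultant checks to ensure disjoint root sets outside $\mathcal{P}_3$ all match the paper's intended argument.

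One small imprecision worth flagging: the three conditions are not mutually independent in the way your ``$2^3 = 8$ sub-cases'' phrasing suggests. Both $H_{-20}$ and $H_{-35}$ split over $\mathbb{Q}(\sqrt{5})$, so for $p \equiv 3 \pmod 4$ one finds that $H_{-20}$ contributes a new edge iff $(\tfrac{5}{p}) = 1$, $H_{-32}$ iff $p \equiv 7 \pmod 8$, and $H_{-35}$ iff both $(\tfrac{5}{p}) = 1$ and $(\tfrac{7}{p}) = 1$. The edge count is therefore $A + B + AC$ rather than $A+B+C$ in the obvious binary variables, which explains why cases (1)--(4) contain $12, 18, 12, 6$ residue classes modulo $840$ rather than the symmetric $6, 18, 18, 6$ that independence would predict. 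This dependence would surface immediately once you actually carry out the CRT enumeration, so it is not a genuine gap.
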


Note that Theorem \ref{thm:spinestructure_ell3-nofold} covers exactly the setting when $\mathcal{G}_\ell(\Fp)$ has no edges. Theorem \ref{thm:spinestructure_ell3-onefold} deals with the case when $j = 0$ is supersingular and $j = 1728$ is not, while in Theorem \ref{thm:spinestructure_ell3-twofold}, both $j = 0$ and $j = 1728$ are supersingular.

One obstacle to obtaining general explicit structure results about $\slp$ for $p \ge 5$, stated only in terms of congruence classes of $p$, is the fact that degrees of Hilbert class polynomials grow as the corresponding discriminant increases in absolute value. Already for $\ell = 5$, this becomes a problem: the polynomial Res$_5(X)$ has degree 54 and decomposes over $\ZZ$ into linear, quadratic and quartic irreducible factors. While formulas exist for the roots of degree 4 polynomials, the conditions on $p$ become increasingly complicated, and for irreducible factors of degree and higher, such root formulas may no longer exist. 

\section{Experiments on the Structural Properties of Isogeny Graphs} \label{sec:exp}

As mentioned in the introduction, $\gfb$ is an optimal expander graph and in fact a Ramanujan graph for $p \equiv 1 \pmod{12}$ (when neither 0 nor 1728 is supersingular). The fact that we can partition the vertices of $\gfb$ into two categories, namely the $\Fp$-vertices and the $(\mathbb{F}_{p^2} \setminus\Fp)$-vertices,  casts doubt upon the assumption that $\gfb$ behaves like a random graph. To shed further light on this question, we gathered a substantial amount of data on graph-theoretic invariants of spines~$\slp$ for $\ell \le 5$ and many primes $p$. We provide an in-depth study of a selection of these invariants in this section. 

\subsection{Relevant notions and definitions} \label{ssec:defs}

Let $G$ be a directed graph with vertex set $V(G)$ and edge set $E(G)$. Recall that $G$ is said to be \emph{strongly connected} if it contains a directed path between any two vertices. The \emph{distance} $d(v,w)$ from a vertex $v$ to a vertex $w$ is the length (i.e.\ the number of edges) in a shortest path from $v$ to $w$ in $G$. If no such path exists, we set $d(v,w)=\infty$. Also, $d(v,v)=0$. 

\begin{definition}[Eccentricity]\label{def:eccentricity}
    Let $v \in V(G)$. The \textit{out-eccentricity} 
    of $v$ 
    is the quantity
    \begin{align*}
    ecc^+(v) &= \max \set{d\brac{v,w} : w \in V(G) \text{ and } d\brac{v,w} \neq \infty}, \\
    \end{align*}
\end{definition}

This notion captures the furthest distance required to travel from $v$ to any other vertex of $G$.

\begin{definition}[Diameter]\label{def:diameter}
If every component of $G$ is strongly connected, then the \emph{diameter} of $G$ is the quantity
        $$diam(G) = \max\set{ecc^+(v): v \in V(G)}.$$
Otherwise, the diameter of $G$ is infinite, i.e.\ $diam(G) = \infty$.
\end{definition}

The diameter is the largest distance between any two vertices of $G$. Similarly, the radius is the smallest distance between any two vertices of $G$.

\begin{definition}[Radius]\label{def:radius}
If every component of $G$ is strongly connected, then the \emph{radius} of $G$ is the quantity
        $$rad(G) = \min\set{ecc^+(v): v \in V(G)}.$$
Otherwise, the radius of $G$ is infinite, i.e.\ $rad(G) = \infty$.
\end{definition}

Finally, the center of $G$ is the set of vertices of $G$ for which the distance to any other vertex is minimal (i.e.\ takes on the value of the radius). 

\begin{definition}[Center]\label{def:center}
    If the radius of $G$ exists, then the \textit{center} of $G$ is the set 
    $$cen(G) = \set{v \in V(G) : ecc^+(v) = rad(G)}.$$
\end{definition}

As suggested by the name, center vertices can be thought of ``central'' in the sense that they are better connected to the entire graph compared to vertices outside the center.

\subsection{Diameter of the spine $\mathcal{S}_2^p$} \label{ssec:diam}
In light of the structure theorems from Section~\ref{sec:ell2structurethms}, the diameters of the connected components of $\slp[2]$ can be explicitly computed in almost all cases. Again, for this entire section assume $p > 13$.  
\begin{remark}[Diameters for $\ell>2$]
    One could use the structure theorems from Section ~\ref{sec:ell3structurethms} to make similar statements about the diameter of $\slp[3]$ (or even $\slp$ for $p \ge 5$). However, compared to the case $\ell = 2$, less is known about the order $r$ of an ideal class of a prime ideal above an odd prime $\ell$ in the appropriate class group, and it is harder to obtain concrete statements. Indeed, the $\ell>2$ case is similar to the $p\equiv 7\pmod{8}$ case for $\ell = 2$ (Theorem~\ref{thm:spinediameters_p7mod8}), where the lengths of the cycles in $\gfb$ depend on the quantity~$r$.
\end{remark}

Following the structure theorems in Section~\ref{sec:ell2structurethms}, we determine the diameters of the connected components of $\mathcal{S}_2^p$ in the cases $p\equiv 1\pmod{4}$ and $p\equiv 3\pmod{8}$. Examples illustrating each case can be found in a notebook at~\cite{LUCANTGitHub} entitled \href{https://github.com/TahaHedayat/LUCANT-2025-Supersingular-Ell-Isogeny-Spine/blob/main/SpineDiameter_examples.ipynb}{\texttt{SpineDiameter\_examples.ipynb}}. For $p\equiv 7\pmod{8}$, we cannot determine edge attachments, which prevents us from classifying the diameter of $\mathcal{S}_2^p$ completely in this case.  

Recall that (the undirected version of) $\mathcal{G}_2(\Fp)$ consists of pairs of vertices joined by a single edge when $p\equiv1\pmod{4}$.
Mapping into the spine $\mathcal{S}_2^p$, the generic behavior for these components is to stack. The diameters of the connected components of $\slp[2]$ thus depend on the number of folds and edge attachments. 

\begin{theorem}[Spine Diameters, $p\equiv 1\pmod{4}$ and $\ell = 2$] \label{thm:spinediameters_p1mod4}
Let $p \ge 17$ with $p\equiv 1\pmod{4}$. There are $h(-4p)/2$ vertices in $\mathcal{S}_2^p$. The following congruence conditions on $p$ completely determine the diameters of the components of $\mathcal{S}_2^p$:
\begin{enumerate}
    \item If $p\equiv 1,17,49,73,97,113\pmod{120}$, then $\mathcal{S}_2^p$ consists of $h(-4p)/2$ vertices joined in pairs, so each connected component of $\mathcal{S}_2^p$ has diameter~1.

    \item If $p\equiv 41,89\pmod{120}$, then one connected component of $\mathcal{S}_2^p$ has four vertices (diameter 3), and the remaining $h(-4p)/2 - 4$ vertices are joined in pairs (diameter~1).

    \item If $p=29$, then $\mathcal{S}_2^p = \mathcal{G}_2(\Fpbar)$, so it contains three vertices and the diameter is~2.

    \item If $p\equiv 29, 101\pmod{120}$, then one connected component of $\mathcal{S}_2^p$ has a single vertex with a loop, one connected component has four vertices (diameter 3), and the remaining $h(-4p)/2 - 5$ vertices are joined in pairs (diameter~1).

    \item If $p\equiv 41,89\pmod{120}$, then one connected component of $\mathcal{S}_2^p$ has four vertices (diameter 3) and the remaining $h(-4p)/2-4$ vertices are joined in pairs (diameter~1).

    \item If $p\equiv 13,37,53,61,77,109\pmod{120}$, then one connected component of $\mathcal{S}_2^p$ is a single vertex with a loop, and the remaining $h(-4p)/2 - 1$ vertices are joined in pairs (diameter~1).
\end{enumerate}
\end{theorem}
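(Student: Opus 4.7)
The plan is to derive each case as a routine diameter calculation applied to the component structure of $\mathcal{S}_2^p$ supplied by Theorem \ref{thm:spinestructure_ell2_p1mod4}. The overarching strategy is: identify what each connected component of $\mathcal{S}_2^p$ looks like as a graph, then read off its diameter directly from its explicit description.

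First I would invoke Theorem \ref{thm:DG}(1) to record that $\mathcal{G}_2(\Fp)$ consists of $h(-4p)$ vertices arranged in $h(-4p)/2$ adjacent pairs whenever $p \equiv 1 \pmod 4$. Under the map $\Omega$ of Definition \ref{def:maps}, each of the three possible structural modifications halves the vertex count of the components it affects: stacking merges two disjoint pairs into one pair joined by a single edge, folding identifies the two endpoints of one pair into a single vertex with a loop, and edge attachment links two already-stacked pairs by a new (necessarily double) edge. Adding these contributions across Theorem \ref{thm:spinestructure_ell2_p1mod4}(1)--(5), one obtains $h(-4p)/2$ vertices in $\mathcal{S}_2^p$ in every case, matching the statement.

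Second, I would compute the diameter of each component type. A stacked pair is a two-vertex graph with one edge, so it has diameter $1$. A folded singleton is a one-vertex component with a loop and requires no comment. The substantive case is the four-vertex component arising from edge attachment: by Theorem \ref{thm:spinestructure_ell2_p1mod4} and Proposition \ref{prop:EApnot7mod8}, this is produced by two stacked pairs joined by a single new edge between one vertex of each pair. The underlying simple graph is therefore a path $v_1 - v_2 - v_3 - v_4$, and although \cite[Lem.\ 3.14]{Arpin} forces the new middle edge to be a double edge, the shortest distance from $v_1$ to $v_4$ remains $3$, yielding diameter $3$. The congruence partitions of Theorem \ref{thm:spinestructure_ell2_p1mod4} then translate directly into cases (1)--(6) of the present theorem, with the caveat that cases (2) and (5) are identical and appear to be a typographical duplication.

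For the special prime $p = 29$, I would verify by direct computation that all three supersingular $j$-invariants lie in $\mathbb{F}_{29}$, so $\mathcal{S}_2^{29} = \mathcal{G}_2(\Fpbar)$, and the resulting three-vertex spine has diameter $2$. The main subtlety worth flagging is confirming that the four-vertex edge-attached component really is a simple path and not, say, a cycle or a star: this is guaranteed because Proposition \ref{prop:EApnot7mod8} forces the new edge to be unique (the single edge between the two distinct supersingular roots of $H_{-15}(X) \pmod p$), and each of the two contributing stacked pairs carries exactly one internal edge, so no additional edges can close the path. Once this structural fact is in hand, each diameter claim is immediate.
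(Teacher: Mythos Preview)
Your proposal is correct and follows the same approach as the paper, which simply states that the result is a direct consequence of applying Theorem~\ref{thm:spinestructure_ell2_p1mod4} to the structure of $\mathcal{G}_2(\Fp)$ from Theorem~\ref{thm:DG}. You have spelled out the details that the paper leaves implicit, including the diameter-3 computation for the four-vertex path component and the observation that cases (2) and (5) are duplicates.
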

\begin{proof}
    This is a direct result of applying Theorem~\ref{thm:spinestructure_ell2_p1mod4} to the possible graph structure of $\mathcal{G}_2(\Fp)$ given in Theorem~\ref{thm:DG}.
\end{proof}

\begin{theorem}[Spine Diameters, $p\equiv 3\pmod{8}$ and $\ell = 2$] \label{thm:spinediameters_p3mod8}
Let $p \ge 17$ with $p\equiv 3\pmod{8}$. There are $2h(-p)$ vertices in $\mathcal{S}_2^p$. The following congruence conditions on $p$ completely determine the diameters of the components of $\mathcal{S}_2^p$:
\begin{enumerate}
    \item If $p\equiv 19, 43, 67, 83, 91, 107\pmod{120}$, then one connected component of $\mathcal{S}_2^p$ consists of two adjacent vertices (diameter 1) and the remaining $2h(-p) - 2$ vertices are joined in groups of four in tripods from case (2) of \ref{thm:DG} (diameter 2).

    \item If $p = 59$, then $\mathcal{S}_2^p$ is a single connected component formed by a tripod edge joined by an edge to a folded tripod component (diameter 4).

    \item If $p\equiv 11,59\pmod{120}$ ($p\neq 59$), then one connected component of $\mathcal{S}_2^p$ consists of two adjacent vertices (diameter 1), one connected component is 8 vertices in two tripod shapes joined by a double edge (diameter 5), and the remaining $2h(-p) - 9$ vertices are adjacent in groups of four in tripod formation.
\end{enumerate}
\end{theorem}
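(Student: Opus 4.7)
The plan is to derive this theorem as a direct consequence of Theorem \ref{thm:spinestructure_ell2_p3mod8}, which completely determines the graph structure of $\mathcal{S}_2^p$, together with Theorem \ref{thm:DG}(2) for the tripod structure of $\mathcal{G}_2(\Fp)$. First I would verify the vertex count. The graph $\mathcal{G}_2(\Fp)$ has $4h(-p)$ vertices partitioned into $h(-p)$ tripod components. The unique $j=1728$ component folds, sending its $4$ vertices to $2$ spine vertices. Since $h(-p)$ is odd by Proposition \ref{prop:classno}, $h(-p)-1$ is even, and the remaining tripods stack pairwise, producing $(h(-p)-1)/2$ stacked tripods of $4$ vertices each in $\mathcal{S}_2^p$. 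The grand total is $2 + 4\cdot(h(-p)-1)/2 = 2h(-p)$, matching the claim.

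Next I compute diameters component by component. A stacked tripod in $\mathcal{S}_2^p$ is a copy of $K_{1,3}$, so has diameter $2$. A folded tripod collapses its center and the floor vertex sharing $j=1728$ to a single vertex (which acquires a loop from the collapsing edge), and identifies its two remaining floor leaves to a single vertex joined to the first by a double edge; the diameter of this $2$-vertex component is $1$. These two observations immediately settle case (1), where no edge attachments occur.

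For case (2), $p=59$, Theorem \ref{thm:spinestructure_ell2_p3mod8} places an attaching edge between a floor vertex of the folded component and a floor vertex of a stacked tripod. Labeling the stacked tripod as $K_{1,3}$ with center $c$ and leaves $\ell_1,\ell_2,\ell_3$, and the folded component as two adjacent vertices $u, v$ with $u$ carrying the loop at $j=1728$, the attaching edge connects $v$ to $\ell_1$. Enumerating pairwise distances, the longest is achieved by $u \leftrightarrow \ell_2$ or $u \leftrightarrow \ell_3$ via the path of length $4$: $u \to v \to \ell_1 \to c \to \ell_j$, yielding diameter exactly $4$. For case (3), the attaching double edge lies between leaves of two stacked tripods. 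Writing the two tripods as $K_{1,3}$ with centers $c_1, c_2$ and leaves $\ell_{i,k}$ where $\ell_{1,1}\sim \ell_{2,1}$ is the attaching edge, the farthest pair is $\ell_{1,j}$ and $\ell_{2,k}$ with $j,k \in \{2,3\}$, realized by the length-$5$ path $\ell_{1,j} \to c_1 \to \ell_{1,1} \to \ell_{2,1} \to c_2 \to \ell_{2,k}$. All untouched tripod components have diameter $2$, and the folded component contributes diameter $1$, completing the case.

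The main obstacle is the floor-to-floor placement of the attaching edge, which is what pins the diameters to $4$ and $5$ rather than smaller values. This geometric datum is precisely what Theorem \ref{thm:spinestructure_ell2_p3mod8} supplies in cases (2) and (3): a hypothetical floor-to-surface attachment would shorten the longest path by routing through the central vertex. Beyond that observation, the diameter computations reduce to short path enumerations in very small graphs.
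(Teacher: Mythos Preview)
Your proof is correct and follows exactly the same approach as the paper: the paper's proof is the single sentence ``Follows directly from applying Theorem~\ref{thm:spinestructure_ell2_p3mod8} to Theorem~\ref{thm:DG},'' and you have simply unpacked that sentence by explicitly working out the vertex count, the shape of the folded component, and the diameter computations in each case. Your identification of the folded tripod as a two-vertex component and your floor-to-floor placement of the attaching edge are precisely the inputs supplied by Theorem~\ref{thm:spinestructure_ell2_p3mod8}, so nothing is missing.
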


\begin{proof}
    Follows directly from applying Theorem~\ref{thm:spinestructure_ell2_p3mod8} to Theorem~\ref{thm:DG}.
\end{proof}

\begin{theorem}[Spine Diameters, $p\equiv 7\pmod{8}$ and $\ell = 2$] \label{thm:spinediameters_p7mod8}
    Let $p \ge 17$ with $p\equiv 7\pmod{8}$. There are $h(-p)$ vertices in $\mathcal{S}_2^p$. Let $r$ denote the order of the ideal class generated by either of the prime ideals above $2$ in the class group of $\mathbb{Q}(\sqrt{-p})$. 
    If $p\equiv 7, 23, 31, 47, 79, 103\pmod{120}$, then the diameter of the spine is $(r+3)/2$.

\end{theorem}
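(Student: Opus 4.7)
The plan is to combine the spine structure theorem (Theorem~\ref{thm:spinestructure_ell2_p7mod8}) with the volcano description of $\mathcal{G}_2(\Fp)$ from Theorem~\ref{thm:DG}(3), in the spirit of the proofs of Theorems~\ref{thm:spinediameters_p1mod4} and~\ref{thm:spinediameters_p3mod8}. Under the listed congruence conditions, case (2) of Theorem~\ref{thm:spinestructure_ell2_p7mod8} implies that $\Theta$ introduces no new edges, so $\mathcal{S}_2^p$ coincides with $\Gamma(\mathcal{G}_2(\Fp))$. By Theorem~\ref{thm:DG}(3), $\mathcal{G}_2(\Fp)$ decomposes as a disjoint union of $h(-p)/r$ volcanoes, each consisting of a rim cycle of length $r$ together with $r$ pendant floor vertices. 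Exactly one such volcano folds under $\Gamma$ (the unique component containing $j=1728$ and $j=8000$), while the remaining $(h(-p)/r - 1)/2$ pairs of volcanoes stack.

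The next step is to compute the diameter of each component type in $\mathcal{S}_2^p$ separately. For a stacked pair, identification of twist vertices by $j$-invariant produces a component on $2r$ vertices with the same topology as a single volcano: a rim cycle of length $r$ with $r$ pendant floor leaves. Its diameter is realized by a shortest path between two floor vertices at maximum cyclic rim-distance $(r-1)/2$: such a path must ascend from the first floor vertex to its rim neighbor, traverse $(r-1)/2$ rim edges, and then descend to the second floor vertex, giving length $1 + (r-1)/2 + 1 = (r+3)/2$. For the folded component, the twist-induced involution collapses the $r$ rim vertices and $r$ floor vertices into $r$ spine vertices (with the surface $1728$ vertex and its floor twist collapsing together), producing a subgraph whose diameter is bounded above by $(r+3)/2$. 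Taking the maximum eccentricity over all components then yields the stated value.

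The main technical obstacle is analyzing the folded component in detail. Because $j = 1728$ has extra $\Fpbar$-automorphisms not defined over $\Fp$, the twist involution is not quite a graph automorphism of $\mathcal{G}_2(\Fp)$ in the naive sense, so one must carefully track which $\mathcal{G}_2(\Fp)$-edges become loops or parallel edges in $\Gamma(\mathcal{G}_2(\Fp))$ and whether any parallel pair of $\mathcal{G}_2(\Fp)$-edges actually corresponds to distinct $\Fpbar$-equivalence classes of isogenies or collapses to a single spine edge under the canonical twist isomorphism over $\Fpbar$. Once the folded component's structure is pinned down, the diameter bound follows by elementary case analysis and a direct comparison with the stacked components, completing the proof.
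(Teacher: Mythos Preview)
Your approach matches the paper's one-line proof exactly: combine Theorem~\ref{thm:spinestructure_ell2_p7mod8} (no new edges under the listed congruences, so $\mathcal{S}_2^p = \Gamma(\mathcal{G}_2(\Fp))$) with the volcano description from Theorem~\ref{thm:DG}(3). Your diameter computation $(r+3)/2$ for the stacked volcano components is correct, and you rightly flag the folded component as the delicate point.

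There is, however, one genuine gap. You establish only an \emph{upper} bound of $(r+3)/2$ for the diameter of the folded component and then take the maximum over all components. This yields the stated value only when at least one stacked pair is present, i.e.\ when $h(-p)/r \ge 3$. If the class group of $\mathbb{Q}(\sqrt{-p})$ is cyclic and generated by the class of a prime above~$2$ (so $h(-p)=r$), the folded component is the entire spine and your argument gives only an inequality. Worse, the folded component then has only $r$ vertices, so its diameter is at most $r-1$, which is strictly less than $(r+3)/2$ when $r=3$; this actually occurs, e.g.\ for $p=23$ and $p=31$. The paper's one-line proof does not address this case either, so the issue appears to lie with the theorem statement rather than with your method.
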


\begin{proof}
    Follows directly from applying Theorem~\ref{thm:spinestructure_ell2_p7mod8} to Theorem~\ref{thm:DG}.
\end{proof}

If $p\equiv 71, 119\pmod{120}$, then the diameter of $\mathcal{S}_2^p$ is uncertain. Attaching edges will approximately double the diameter of the resulting connected component, but there are no clear congruence conditions for when this occurs. See Figure~\ref{fig:p7mod8} for a plot visualizing the mean diameters of the components of $\slp[2]$ for a range of primes $p\equiv 7\pmod{8}$. The notebook used to collect the data in this figure can be found in the file \href{https://github.com/TahaHedayat/LUCANT-2025-Supersingular-Ell-Isogeny-Spine/blob/main/SpineDiameter.ipynb}{\texttt{SpineDiameter.ipynb}} at~\cite{LUCANTGitHub}.

\begin{figure}[h]
    \centering
    \includegraphics[width=0.7\linewidth]{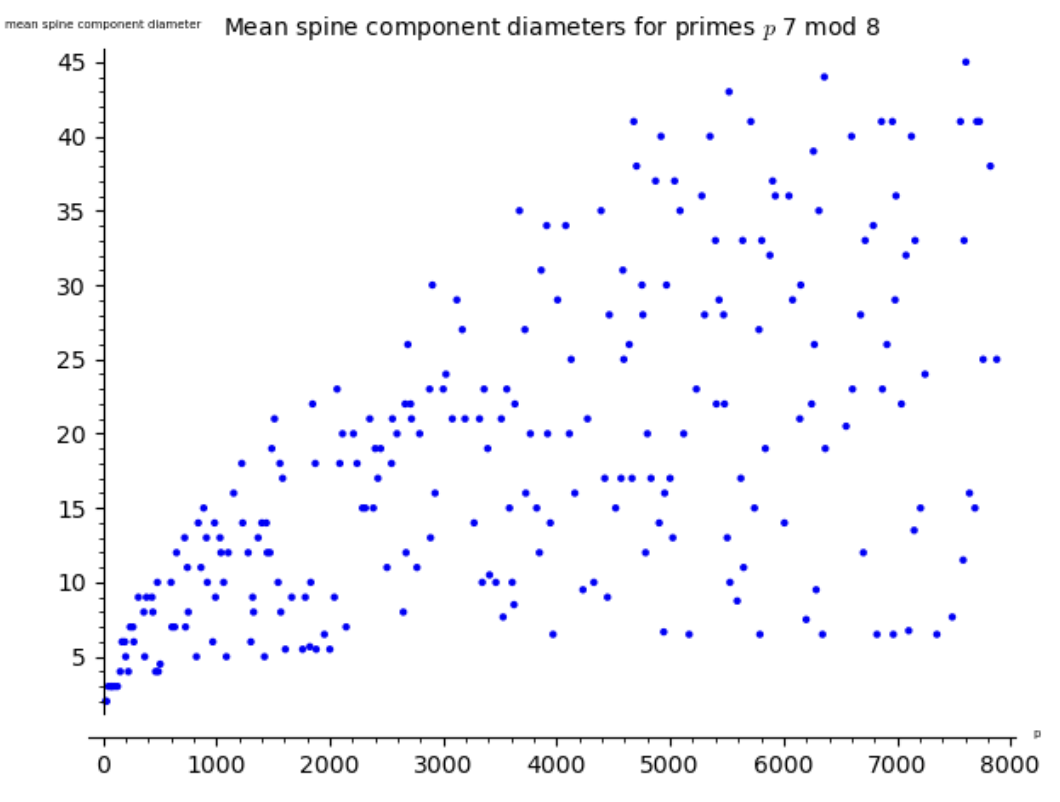}
    \caption{Mean spine component diameters in $\mathcal{G}_2(\Fpbar)$, for 250 primes $p \equiv 7\pmod{8}$ with $23 \le p \le 7879$.}
    \label{fig:p7mod8}
\end{figure}

\subsection{Center of $\gfb$} \label{ssec:center}

In this work, we computed the centers of supersingular elliptic curve $2$- and $3$-isogeny graphs and counted the number of center vertices belonging to their respective spines. The accompanying data are listed in the file \href{https://github.com/TahaHedayat/LUCANT-2025-Supersingular-Ell-Isogeny-Spine/blob/main/center012925.csv}{\texttt{center012925.csv}}. They were generated with the notebook \href{https://github.com/TahaHedayat/LUCANT-2025-Supersingular-Ell-Isogeny-Spine/blob/main/Center_DataGeneration.ipynb}{\texttt{Center\_Data\-Generation.ipynb}} and plotted using \href{https://github.com/TahaHedayat/LUCANT-2025-Supersingular-Ell-Isogeny-Spine/blob/main/Center_Data\-Processing.ipynb}{\texttt{Center\_DataProcessing.ipynb}}. All these sources can be found at \cite{LUCANTGitHub}.

Recall that the center of a graph (Definition~\ref{def:center}) is the set of vertices with minimal out-eccentricity. These vertices are well-connected to every other vertex in the graph. Considering that the $p$-power Frobenius map is a graph automorphism on $\gfb$ that fixes the vertices of $\slp$, one might expect the spine vertices to be over-represented in the center of $\gfb$. To see this, we note that for any vertex $v \in \Fp$, the set of distances from $v$ displays a symmetry whereby distances can be paired up. Specifically, if $w$ is any vertex of $\gfb$ and $w^p$ its Frobenius conjugate, then $d(v,w) = d(v,w^p)$. This property does not hold for vertices $v \in \mathbb{F}_{p^2} \setminus \Fp$. So the set of distances from vertices in $\Fp$ only supports ``half the randomness'' of the set of distances from vertices outside $\Fp$.

In our first experiment, we counted the number of $\Fp$-vertices in the center of~$\mathcal{G}_2(\Fpbar)$ as $p$ ranges through the  2260 primes from $5$ to $19997$. Immediately, a striking wave-like pattern emerged. In order to ascertain whether this behaviour was particular to the $\Fp$-vertices in the center or the shadow of a broader phenomenon, we repeated this experiment for the entire center of $\mathcal{G}_2(\Fpbar)$ using the same range of primes. The results are plotted in blue in Figure~\ref{fig:center-size}. The wave shapes are more pronounced over $\Fpbar$ compared to~$\Fp$; analogous results for $\ell=3$ look similar. 

We are indebted to Jonathan Love for the following explanation. The observed wave pattern in Figure~\ref{fig:center-size} closely matches the expected behavior of the minimum value of an integer-valued distribution with slow growing mean -- in this case, the out-eccentricity whose minimum value (which is the radius) grows approximately as $\log(p/12)$ -- and very small standard deviation. Each wave corresponds to primes~$p$ for which $\mathcal{G}_2(\Fpbar)$ has a fixed radius $r$. As $p$ grows, so does $p/12$ (the number of vertices in $\mathcal{G}_2(\Fpbar)$), allowing fewer and fewer vertices with out-eccentricity $r$ until no more such vertices exist. At this point, the radius of $\mathcal{G}_2(\Fpbar)$ jumps to $r+1$, an out-eccentricity value taken on by many more vertices, which starts the next wave. Our experiments confirm that the radius of $\mathcal{G}_2(\Fpbar)$ is generally only slightly larger than $\log_2(p/12)$ in the range of primes under investigation. Thus, if a wave corresponding to a given radius $r$ peaks at a prime $p$, then the peak of the next wave, corresponding to radius $r+1$, should be located close to $2p$; in other words, the distance between consecutive wave crests doubles each time. Our data bears this out as well.

The green `Discrete Gaussian' points in Figure~\ref{fig:center-size} were obtained via a Discrete Gaussian sampler as follows. 
For any prime $p$ with $p \equiv 1 \pmod{12}$, simulate a 3-regular graph $G$ with $(p-1)/12$ vertices. Assign an out-eccentricity to any vertex of $G$ by sampling from a normal distribution with mean $1.8\log(p)$ and standard deviation $0.38$ and plot the floor function of the sampled values. Although out-eccentricities of adjacent vertices in $\mathcal{G}_2(\Fpbar)$ are in actuality not independent, it is evident that this model matches our observed center size data quite closely.

Additionally, we thank Thomas Decru and Jonathan Komada Eriksen for the observation that the likelihood of a vertex $v$ of $\mathcal{G}_\ell(\Fpbar)$ to belong to the center can be estimated by the discrepancy between the theoretically possible and the actual number of ways in which it can achieve $ecc^+(v) = r$, where $r$ is the radius of $\mathcal{G}_\ell(\Fpbar)$. Let $T_\ell$ be the tree rooted at~$v$ where~$v$ has $\ell+1$ children, all other interior vertices have $\ell$ children, and all leaf nodes are located at level~$r$. Then~$T_\ell$ models an idealized version of the possible paths of length at most $r$ from~$v$ in~$\mathcal{G}_\ell(\Fpbar)$, assuming no cycles are encountered. The difference $\epsilon = |V(T_p)| - |(\mathcal{G}_\ell(\Fpbar))|$ is a measure of the likelihood for a vertex of $\mathcal{G}_\ell(\Fpbar)$ to lie in the center. For $\ell = 2$, the number of vertices in $T_2$ is $|V(T_2)| = 1 + 3(2^{r} - 1)$. The quantity $\epsilon = |V(T_2)| - |V(\mathcal{G}_2(\Fpbar))|$, scaled by $1/12$ for best fit, is plotted in red in Figure~\ref{fig:center-size}.

\begin{figure}[ht]
    \centering
    \includegraphics[width=0.8\linewidth]{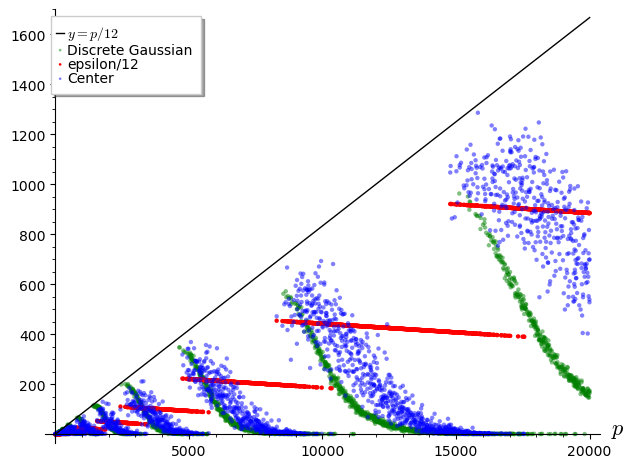}
    \caption{Size of the center of $\mathcal{G}_2(\Fpbar)$ (in blue) and discrete Gauss sampling with $\mu = 1.8\log(p)$, $\sigma = 0.38$, (in green). The red segments represent the estimate $\epsilon/12$ for a vertex of $\mathcal{G}(\Fpbar)$ to belong to the center, and the line $y = p/12$ (in black) approximates the total number of vertices in $\mathcal{G}_2(\Fpbar)$.}
    \label{fig:center-size}
\end{figure}

To ascertain if elliptic curves with extra automorphisms had any effect on the size of the center of $\mathcal{G}_2(\mathbb{F}_2)$, we separated our data into congruence classes of $p\pmod{4}$ and $p\pmod{3}$, and the resulting plots are found in Figure~\ref{fig:FpCenterSorted}. 
No definitive pattern emerges for the congruence classes of $p\pmod{3}$. While the data points for both congruences classes of $p$ modulo~4 are spread out over the entire data range, higher center size counts (i.e.\ more data points in the wave peaks) appear for $p\equiv 3\pmod{4}$. This is to be expected because the radius $r_p$ of $\mathcal{G}_\ell(\Fpbar)$ tends to be larger in this case and hence easier to attain. In particular, the vertex associated to $j=1728$ has only one neighbor distinct from itself, wheres a generic vertex of~$\mathcal{G}_\ell(\Fpbar)$ is expected to have three neighbors. A larger  radius~$r_p$ makes it easier for a random vertex to achieve out-eccentricity at most~$r_p$  and thus belong to the center of $\mathcal{G}_\ell(\Fpbar)$.

\begin{figure}[h]
     \centering
     \begin{subfigure}[b]{0.53\textwidth}
         \centering
         \includegraphics[width=\textwidth]{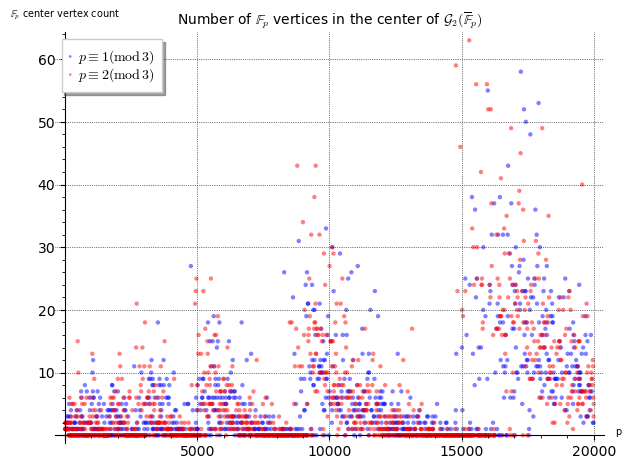}
         \caption{$p\pmod{3}$}
         \label{fig:y equals x}
     \end{subfigure}
     \hspace*{-30pt}
     \begin{subfigure}[b]{0.53\textwidth}
         \centering
         \includegraphics[width=\textwidth]{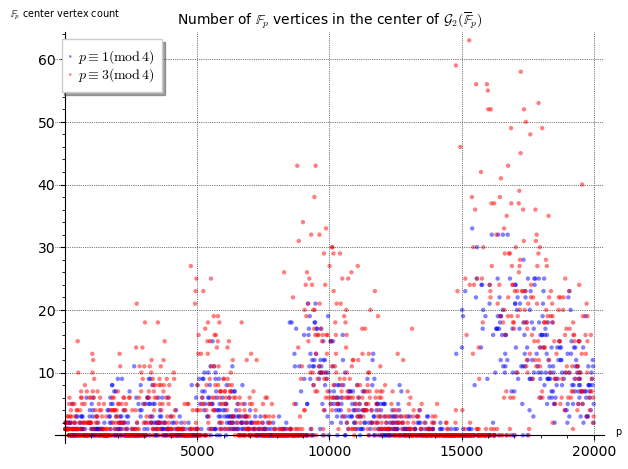}
         \caption{$p \pmod{4}$}
         \label{fig:five over x}
     \end{subfigure}
        \caption{Number of $\Fp$-vertices in the center, sorted by congruence class of~$p$. On the left, blue and red represent $p\equiv 1 \pmod{3}$ and $p \equiv 2 \pmod{3}$, respectively. On the right, blue and red correspond to $p \equiv 1 \pmod{4}$ and $p \equiv 3 \pmod{4}$, respectively.}
        \label{fig:FpCenterSorted}
\end{figure}

We also investigated the likelihood of $1728$ belonging to the center of $\mathcal{G}_2(\Fpbar)$. Overwhelmingly, this is not the case: out of the 1135 primes $p\equiv 3\pmod{4}$ with $5\leq p < 20000$, the only primes $p$ for which $1728$ lies in the center of $\mathcal{G}_2(\Fpbar)$ are $p = 7, 11, 19$.

\section{Conclusions and future work} \label{sec:concl}

The spine $\slp$ of the supersingular $\ell$-isogeny graph $\gfb$, our main protagonist herein, is obtained by mapping the supersingular $\ell$-isogeny graph $\gf$ into $\gfb$ via a natural two-step process. When passing from $\Fp$-isomorphism classes of curves to $\Fpbar$-isomorphism classes, vertices of $\gf$ representing $j$-invariants of twists are identified, leading to either stacking or folding of connected components of $\gf$. Components may be joined via attachment at a vertex (for $\ell > 2$ only, and only for $j$-invariant 1728) or an edge. Passing from $\ell$-isogenies over $\Fp$ to those over $\Fpbar$ subsequently introduces new edges. 

The authors of \cite{Arpin} provided the first major insight into this arguably surprisingly predictable process. Our structure theorems in Sections~\ref{sec:ell2structurethms}
and~\ref{sec:ell3structurethms} offer a refinement of their work by characterizing this behavior almost completely in the cases $\ell = 2, 3$ via congruence conditions on $p$, and outlining a general road map for determining $\slp$ for larger primes $\ell$. For any particular pair $(\ell, p)$, the graphs $\gf$, $\slp$ and $\gfb$ can be explicitly generated using our code at \cite{LUCANTGitHub}; for small primes $p$ and $\ell = 2, 3$, they are described explicitly in a separate document there and cited throughout this paper. 

Our structure theorems make it possible to  determine the diameter of $\slp[2]$, i.e.\ the largest distance between any two vertices of $\slp[2]$. This is entirely explicit, and shows that the diameter tends to be very small, when $p \not \equiv 7 \pmod{8}$; in the case $p \equiv 7 \pmod{8}$, the diameter is determined by the order of the ideal class represented by a prime ideal above 2 in the class group of $\mathbb{Q}(\sqrt{-p})$.

It is natural to ask how the spine is situated inside the full $\ell$-isogeny graph. To that end, we considered the centers of both $\slp[2]$ and $\mathcal{G}_2(\Fp)$, i.e.\ the collection of vertices whose furthest distance to any other vertex is minimal. We found that the count of center vertices in $\gfb$ defined over $\Fp$, as well as the size of the full center of~$\mathcal{G}(\Fpbar)$, follow a remarkable wave-like pattern as $p$ grows. A similar pattern was observed for $\ell = 3$. The community-sourced explanation shows that this pattern is evidence of supersingular elliptic curve isogeny graphs behaving as random graphs.

Our continuing exploration of graph theoretic features of $\slp$ gives us new insight into the cryptographically relevant heuristic assumptions we make about $\mathcal{G}_\ell(\Fpbar)$. Beyond the findings reported herein, we conducted extensive numerical experiments generating a substantial volume of data on both internal and external connectivity properties of $\slp$, as well as counts and proportions of vertices in the periphery of $\slp$ and $\mathcal{G}_\ell(\Fpbar)$. Our findings raise a number intriguing questions (and answers, thanks to our community); analyzing and understanding the results of our experiments is very much a work in progress as we strive to shed further light on the structural features and patters found in supersingular elliptic curve $\ell$-isogeny graphs.

\bibliographystyle{alpha}
\bibliography{EllipticCurveBib}
\end{document}